\theoremstyle{plain}
\newtheorem{thm}{Theorem}[section]
\newtheorem{prop}[thm]{Proposition}
\newtheorem{lem}[thm]{Lemma}
\newtheorem{cor}[thm]{Corollary}
\theoremstyle{definition}
\newtheorem{defn}{Definition}
\theoremstyle{remark}
\newtheorem{remark}{Remark}
  \def\C{{\mathbb{C}}}  \def\E{{\mathbb{E}}} \def\F{{\mathbb{F}}}        \def\N{{\mathbb{N}}}            \def\Z{{\mathbb{Z}}}
\def\bfa{{\bf{a}}} \def\bfb{{\bf{b}}}                        
 \def\cB{{\mathcal{B}}} \def\cC{{\mathcal{C}}}   \def\cF{{\mathcal{F}}} \def\cG{{\mathcal{G}}}         \def\cP{{\mathcal{P}}} \def\cQ{{\mathcal{Q}}} \def\cR{{\mathcal{R}}} \def\cS{{\mathcal{S}}}      \def\cY{{\mathcal{Y}}} \def\cZ{{\mathcal{Z}}}
     \def\tcF{{\tilde{\mathcal{F}}}} \def\tcG{{\tilde{\mathcal{G}}}}
   \def\sD{{\mathscr{D}}}           \def\sO{{\mathscr{O}}} \def\sP{{\mathscr{P}}}          
                       \def\tX{{\tilde{X}}}  
     \def\tf{{\tilde{f}}}                    
     \def\tcF{{\tilde{\cF}}} \def\tcG{{\tilde{\cG}}}
   \def\tmu{{\widetilde{\mu}}}             
\newcommand{\Ga}{\Gamma}
\newcommand{\ga}{\gamma}
\newcommand{\eps}{\epsilon}
\newcommand\Aut{\operatorname{Aut}}
\newcommand\Cay{\operatorname{Cay}}
\newcommand\Ca{\operatorname{{\bf Cantor}}}
\newcommand\Cantor{\textrm{{\bf Cantor}}}
\newcommand\Factor{{\operatorname{Factor}}}
\newcommand\Fix{{\operatorname{Fix}}}
\newcommand\Ker{\operatorname{Ker}}
\newcommand\Prob{\operatorname{Prob}}
\def\cc{{\curvearrowright}}
\begin{document}
\title{Zero entropy is generic}
\author{Lewis Bowen\footnote{supported in part by NSF grant DMS-1500389, NSF CAREER Award DMS-0954606}}
\maketitle

\begin{abstract}
Dan Rudolph showed that for an amenable group $\Ga$, the generic measure-preserving action of $\Ga$ on a Lebesgue space has zero entropy. Here this is extended to nonamenable groups. In fact, the proof shows that every action is a factor of a zero entropy action! This uses the strange phenomena that in the presence of nonamenability, entropy can increase under a factor map. The proof uses Seward's recent generalization of Sinai's Factor Theorem, the Gaboriau-Lyons result and my theorem that for every nonabelian free group, all Bernoulli shifts factor onto each other. 
\end{abstract}

\noindent
{\bf Keywords}: entropy, measure-preserving actions\\
{\bf MSC}:37A35\\

\noindent
\tableofcontents

\section{Introduction}

Entropy theory in dynamics has recently been extended from actions of the integers (and more generally, amenable groups) to actions of sofic groups \cite{bowen-jams-2010} and arbitrary countable groups \cite{seward-kreiger-1, seward-kreiger-2, alpeev-seward}. Here we begin to investigate generic properties of measure-preserving actions of countable groups with an eye towards understanding their entropy theory.

Our starting point is a result due to Rokhlin \cite{rokhlin-1959}: the generic automorphism $T \in \Aut(X,\mu)$ has zero entropy. To be precise, $(X,\mu)$ denotes a Lebesgue probability space and $\Aut(X,\mu)$ is the group of measure-preserving automorphisms $\phi:X \to X$ in which automorphisms that agree almost everywhere are identified. This group has a natural Polish topology: a sequence $\{T_i\} \subset\Aut(X,\mu)$ converges to $T$ if for every measurable subset $A \subset X$, $\mu(T_i A \vartriangle TA)  \to 0$ as $i\to\infty$. The claim is that the subset of all transformations $T \in \Aut(X,\mu)$ that have zero entropy contains a dense $G_\delta$ subset so that it is residual in the sense of Baire category.

In order to consider the analogous question for general countable groups, we first need a notion of entropy. So suppose we have a countable group $\Ga$ and a probability-measure-preserving action $\Ga \cc (X,\mu)$. Assuming the action is ergodic, its {\bf Rokhlin entropy}, denoted $h^{Rok}_\Ga(X,\mu)$, is the  infimum of $H_\mu(\cP)$ over all generating partitions $\cP$. Recall that a partition $\cP$ of $X$ is {\bf generating} if the smallest $\Ga$-invariant sigma-algebra containing it is the full Borel sigma-algebra (modulo null sets) and the Shannon entropy is defined by
$$H_\mu(\cP) : = -\sum_{P\in \cP} \mu(P)\log\mu(P).$$
Rokhlin entropy agrees with Kolmogorov-Sinai entropy for essentially free actions whenever $\Ga$ is amenable \cite{seward-tucker-drob} and Rokhlin entropy upper-bounds sofic entropy when $\Ga$ is sofic (this is immediate from the definition in \cite{bowen-jams-2010}). 

We also need a space of actions. This can be handled in two different ways. We consider the space $A(\Ga,X,\mu)$ of all homomorphisms $\alpha:\Ga \to \Aut(X,\mu)$ equipped with the topology of pointwise convergence (see \cite{Kechris-global-aspects} for details). Alternatively, let $\Ca$ denote the usual middle thirds Cantor set and let $\Ga$ act on $\Ca^\Ga$ by $(fx)(g)=x(f^{-1}g)$ (where $x\in \Ca^\Ga$ is represented as a function $x:\Ga \to \Ca$). This action is by homeomorphisms when we equip $\Ca^\Ga$ with the product topology. We let $\Prob_\Ga(\Ca^\Ga)$ denote the space of all $\Ga$-invariant Borel probability measures on $\Ca^\Ga$ with respect to the weak* topology. A fundamental result of Glasner-King \cite{GK98} together with the weak Rokhlin property \cite{glasner2006every} implies that if $\cP$ is any property of actions that is invariant under measure-conjugacy then the set of all actions $\alpha \in A(\Ga,X,\mu)$ that have $\cP$ is a residual set if and only if the set of all measures $\mu \in \Prob_\Ga(\Ca^\Ga)$ such that $\Ga \cc (\Ca^\Ga,\mu)$ has $\cP$ is a residual set\footnote{More precisely, Glasner and King proved this result with the unit interval in place of the Cantor set. However, in \cite{bowen-hartman-tamuz-1} it was shown to hold for any perfect Polish space in place of the unit interval.}. Therefore, we can choose to study either $A(\Ga,X,\mu)$ or $\Prob_\Ga(\Ca^\Ga)$, whichever one is most convenient for the problem at hand. For most of the paper, we use $\Prob_\Ga(\Ca^\Ga)$ and state the results in terms of $A(\Ga,X,\mu)$.

The first result of this paper is:
\begin{thm}\label{thm:zero}
For any countably infinite group $\Ga$, the subset of actions $\bfa \in A(\Ga,X,\mu)$ with zero Rokhlin entropy is residual in the sense of Baire category. 
\end{thm}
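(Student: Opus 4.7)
Using Glasner-King together with the weak Rokhlin property, I will work in $\Prob_\Ga(\Ca^\Ga)$ and show that the set $Z$ of measures $\mu$ with $h^{Rok}_\Ga(\Ca^\Ga,\mu) = 0$ is a dense $G_\delta$. The argument naturally splits into a topological ($G_\delta$) step and a density step.

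\textbf{$G_\delta$ step.} Write $Z = \bigcap_{n\geq 1} Z_n$, where $Z_n$ is the set of $\mu$ admitting a finite Borel partition $\cP$ of $\Ca^\Ga$ that $\mu$-generates and satisfies $H_\mu(\cP) < 1/n$. For a fixed finite Borel partition $\cP$ whose boundaries carry zero $\mu$-mass, $H_\mu(\cP)$ is continuous in $\mu$; the main remaining task is to argue that the condition ``$\cP$ $\mu$-generates'' is sufficiently robust under weak* perturbations of $\mu$ (up to passing to a slight refinement of $\cP$ of comparable entropy), so that $Z_n$ is $G_\delta$. This can be approached via clopen approximation of generating partitions in the Cantor setting.

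\textbf{Density step.} Since factors are weakly contained (i.e., if $\alpha$ is a factor of $\beta$ then $\alpha$ lies in the closure of the conjugacy class of $\beta$ in $A(\Ga,X,\mu)$), and zero Rokhlin entropy is conjugacy-invariant, density of $Z$ reduces to the statement foreshadowed in the abstract: \emph{every ergodic action $\Ga \cc (X,\mu)$ is a factor of some zero-Rokhlin-entropy $\Ga$-action}. For amenable $\Ga$ this is classical, since Rokhlin entropy agrees with Kolmogorov--Sinai entropy for free actions and is monotone under factors in that setting. For nonamenable $\Ga$, I would combine the three cited tools: Seward's generalization of Sinai's factor theorem, which produces Bernoulli shifts as factors of positive-Rokhlin-entropy actions; the Gaboriau--Lyons theorem, which realizes a free ergodic $F_2$-action inside the orbit equivalence relation of a Bernoulli $\Ga$-action; and my theorem that all nontrivial Bernoulli $F_2$-shifts factor onto one another, which allows us to replace such an $F_2$-subaction by a Bernoulli $F_2$-shift with base of arbitrarily small Shannon entropy. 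Transferring back to $\Ga$ via the Gaboriau--Lyons orbit equivalence (with an appropriate co-induction) yields a $\Ga$-action whose Rokhlin entropy is zero yet which factors onto $\alpha$, exploiting exactly the nonmonotonicity of Rokhlin entropy under factor maps in the nonamenable regime.

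\textbf{Main obstacle.} The hard point is the nonamenable half of the density step: organizing Seward, Gaboriau--Lyons, and the $F_2$-Bernoulli-factor theorem into a single honest construction, and verifying that the transfer of a low-entropy $F_2$-action back to $\Ga$ both preserves the factor map onto $\alpha$ and keeps $\Ga$-Rokhlin entropy at zero. Rokhlin entropy is not monotone under factors in this setting --- that is precisely what makes the theorem possible --- so one must carefully control entropy through co-induction from an infinite-index free subgroup, which is the core technical difficulty.
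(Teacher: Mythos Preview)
Your high-level plan --- pass to $\Prob_\Ga(\Ca^\Ga)$ via Glasner--King, prove a $G_\delta$ statement, and obtain density from an extension theorem (``every essentially free action is a factor of a zero-Rokhlin-entropy action'') --- is exactly the paper's strategy. The paper even records your density reduction (factors lie in the closure of the extension's conjugacy class) as its alternative proof. However, your sketch of the nonamenable extension theorem has real gaps, and the $G_\delta$ step needs more than what you wrote.

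\textbf{The extension construction.} Three ingredients are missing or misplaced.
First, Gaboriau--Lyons does \emph{not} produce a free subgroup $\F_2\le\Ga$, so there is no co-induction; it produces a free ergodic $\F_2$-action on the same space whose orbits lie inside $\Ga$-orbits, and one transfers factor maps via the associated cocycle. In the paper this is packaged as: a suitable factor of a Bernoulli $\Ga$-shift (built from a sparse percolation, so its generating partition has Shannon entropy $<\epsilon$) carries a non-hyperfinite treeable subequivalence relation, and hence, after crossing with a tiny Bernoulli shift, factors onto \emph{every} Bernoulli $\Ga$-shift. This gives a $\Ga$-action of Rokhlin entropy $<\epsilon$ factoring onto all Bernoulli shifts --- nothing here is co-induction, and the entropy bound comes from the Shannon entropy of an explicit generator, not from any functoriality.
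Second, small is not zero: you must take an \emph{inverse limit} of such actions with $\epsilon_n\to 0$ to obtain a single $\Ga$-action $\Ga\cc(Z,\zeta)$ with $h^{Rok}_\Ga(Z,\zeta)=0$ that still factors onto every Bernoulli shift.
Third, and most importantly, you have not said how the given action $\bfa=\Ga\cc(X,\mu)$ becomes a factor. Seward's Sinai theorem gives more than ``a Bernoulli factor'': it gives a Bernoulli factor $B$ with $h^{Rok}_\Ga(X,\mu\,|\,\cB_B)=0$. The extension is then the \emph{relatively independent joining} of $(X,\mu)$ and $(Z,\zeta)$ over their common Bernoulli factor $B$. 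This factors onto $(X,\mu)$ tautologically, and has zero Rokhlin entropy because the outer entropy of $\cB_B$ is bounded by $h^{Rok}_\Ga(Z,\zeta)=0$ while $h^{Rok}_\Ga(X,\mu\,|\,\cB_B)=0$ by Seward. Your sketch does not contain this joining, and without it there is no factor map onto $\bfa$.

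\textbf{The $G_\delta$ step and nonergodic measures.} Your proposed $Z_n$ uses $H_\mu(\cP)$, but for nonergodic $\mu$ Rokhlin entropy is defined as $\inf_\cP H_\mu(\cP\,|\,\mathrm{Inv})$, so your sets do not describe zero entropy in general. The paper avoids this by first proving the $G_\delta$ statement only on the ergodic simplex (via outer Rokhlin entropy of a fixed increasing clopen sequence $\cP_n$ --- this is where clopen approximation actually enters, not in ``robustness of generation''), and then passing to all of $\Prob_\Ga(\Ca^\Ga)$ by a separate Baire-category argument through the ergodic decomposition (splitting on property~(T)). Alternatively one can quote Alpeev--Seward directly for the $G_\delta$ in $A(\Ga,X,\mu)$; either way, something beyond your paragraph is required.
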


As mentioned above, because Rokhlin entropy is an upper bound for sofic entropy, this implies that the generic action $\bfa \in A(\Ga,X,\mu)$ has nonpositive sofic entropy with respect to all sofic approximations of $\Ga$. 


The main difficulty in proving Theorem \ref{thm:zero} is showing that the subset of actions with zero entropy is dense. If $\Ga$ is amenable then the argument is due to Rudolph (see the Subclaim after Claim 19 in \cite{foreman-weiss}). It is essentially a consequence of the Rokhlin Lemma which implies if an action $\bfa \in A(\Ga,X,\mu)$ is essentially free then its measure-conjugacy class is dense in $A(\Ga,X,\mu)$. If $\Ga$ is nonamenable, then this no longer holds: for example if $\bfa$ is strongly ergodic (e.g. if it is a Bernoulli shift) then the closure of its measure-conjugacy class does not contain any nonergodic actions. 

Assuming $\Ga$ is nonamenable, we take advantage of the fact that entropy can {\em increase} under a factor map. The first example of this phenomenon is due to Ornstein and Weiss \cite{OW87}; they showed that the 2-shift over the rank 2 free group factors onto the 4-shift. This was generalized in several ways: Ball proved that if $\Ga$ is any nonamenable group then there exists some probability space $(K,\kappa)$ with $|K|<\infty$ such that the Bernoulli shift $\Ga \cc (K,\kappa)^\Ga$ factors onto all Bernoulli shifts over $\Ga$ \cite{ball-factors1}. I proved that if $\Ga$ contains a nonabelian free group then in fact all Bernoulli shifts over $\Ga$ factor onto each other \cite{bowen-ornstein-2011}. It is still unknown whether this conclusion holds for all nonamenable $\Ga$. Lastly, Seward proved there is some number $r(\Ga)<\infty$ depending only on $\Ga$ such that if $\Ga \cc (X,\mu)$ is an arbitrary measure-preserving action then there exists another action $\Ga \cc (\tX,\tmu)$ with Rokhlin entropy $\le r(\Ga)$ that factors onto it \cite{seward-small-action}. In other words, every action has an extension with bounded Rokhlin entropy. Our next result shows we can take $r(\Ga)=0$:

\begin{thm}\label{thm:extension}
If $\Ga$ is nonamenable and $\Ga \cc (X,\mu)$ is essentially free, ergodic and probability-measure-preserving, then there exists an action $\Ga \cc (\tX,\tmu)$ with zero Rokhlin entropy that extends $\Ga \cc (X,\mu)$.
\end{thm}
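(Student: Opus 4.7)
The plan is to use Gaboriau--Lyons to introduce an $F_2$-structure into a Bernoulli extension of $(X,\mu)$, then leverage Bowen's result that all Bernoulli shifts over a nonabelian free group factor onto each other, together with Seward's generalization of Sinai's factor theorem, to manufacture $F_2$-extensions of arbitrarily small Rokhlin entropy; finally, push these back to $\Ga$-extensions by co-induction along the inclusion of orbit equivalence relations, and pass to a limit to force the entropy down to zero.

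More concretely, I would first form the join $(Y,\nu)=(X,\mu)\otimes(K^\Ga,\kappa^\Ga)$ where $(K^\Ga,\kappa^\Ga)$ is a Bernoulli shift of $\Ga$; this is still essentially free and ergodic and projects onto $(X,\mu)$. Applied to the Bernoulli factor, the Gaboriau--Lyons theorem provides an essentially free ergodic p.m.p.\ action $F_2\cc (Y,\nu)$ and a cocycle $c:F_2\times Y\to\Ga$ for which $F_2$-orbits are sub-orbits of $\Ga$-orbits, so $\cR_{F_2}\subseteq\cR_\Ga$; moreover the $F_2$-action can be arranged to admit a Bernoulli $F_2$-shift as a factor (coming from the Bernoulli part of $Y$). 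For any $\vre>0$, Seward's generalization of Sinai's theorem yields a Bernoulli $F_2$-factor of small entropy, and Bowen's theorem then gives a Bernoulli $F_2$-shift $\b_\vre$ of entropy less than $\vre$ that factors onto the $F_2$-action on $(Y,\nu)$; relatively joining with $(X,\mu)$ produces an $F_2$-extension $\tY_\vre\to (Y,\nu)$ with $F_2$-Rokhlin entropy at most $\vre$. Using the cocycle $c$, one co-induces $\tY_\vre$ across the inclusion $\cR_{F_2}\subseteq\cR_\Ga$ to obtain a $\Ga$-extension $\hY_\vre\to (X,\mu)$, and controls its $\Ga$-Rokhlin entropy by building a $\Ga$-generating partition from the $F_2$-generating partition together with a measurable transversal for $\cR_{F_2}$ inside $\cR_\Ga$. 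Finally, taking a diagonal joining of $\hY_{1/n}$ as $n\to\infty$ produces $(\tX,\tmu)$ still extending $(X,\mu)$ and admitting generating partitions of arbitrarily small Shannon entropy, so $h^{Rok}_\Ga(\tX,\tmu)=0$.

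The main obstacle, I expect, is the quantitative control of $\Ga$-Rokhlin entropy under the co-induction through the sub-equivalence relation $\cR_{F_2}\subseteq\cR_\Ga$: although Seward's and Bowen's theorems combine cleanly on the $F_2$-side, lifting a small $F_2$-generating partition to a small $\Ga$-generating partition requires delicate use of the Gaboriau--Lyons cocycle together with a carefully chosen measurable transversal. This is where the cocycle structure is essential and where the principal technical work is expected to lie.
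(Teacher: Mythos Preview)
Your proposal has a genuine gap at precisely the point you flag as the ``main obstacle,'' and it is more serious than a matter of delicate bookkeeping. The co-induction of an $F_2$-extension $\tY_\vre\to(Y,\nu)$ across the inclusion $\cR_{F_2}\subset\cR_\Ga$ is not a standard construction, and there is no mechanism that turns a small $F_2$-generating partition into a small $\Ga$-generating partition. The $F_2$-action only tells you how fibers of $\tY_\vre$ are identified along $F_2$-orbits; to extend the $\Ga$-action you must specify how $\Ga$ moves fibers between \emph{different} $F_2$-orbits inside a single $\Ga$-orbit, and there is no canonical (or small-entropy) way to do this. A ``measurable transversal for $\cR_{F_2}$ inside $\cR_\Ga$'' encodes an element of $\Ga$ at each point, and there are typically infinitely many $F_2$-orbits per $\Ga$-orbit; recording this data costs entropy that you cannot bound by $\vre$. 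In short, entropy-reduction on the $F_2$-side does not transfer to the $\Ga$-side.

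The paper avoids this problem by never leaving the $\Ga$-side for entropy computations. The Gaboriau--Lyons/$F_2$ machinery is used only to build \emph{factor maps between $\Ga$-actions}: one shows that a certain small-Rokhlin-entropy $\Ga$-action (a sparse percolation factor of a Bernoulli shift, crossed with a tiny Bernoulli) factors onto every $\Ga$-Bernoulli shift, by routing through the $F_2$-structure and Bowen's theorem at the level of the map, not the entropy. Taking an inverse limit of such actions gives a single $\Ga$-action $(Z,\zeta)$ with $h^{Rok}_\Ga(Z,\zeta)=0$ that still surjects onto every Bernoulli $\Ga$-shift. Now Seward's Sinai theorem is applied to $(X,\mu)$ itself (as a $\Ga$-action) to produce a Bernoulli $\Ga$-factor $B$ with $h^{Rok}_\Ga(X,\mu|\cB_B)=0$, and $(\tX,\tmu)$ is the relatively independent joining of $(X,\mu)$ and $(Z,\zeta)$ over $B$. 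The entropy estimate is then a two-line computation entirely in terms of $\Ga$-Rokhlin entropy. So the key reorganization is: use $F_2$ to manufacture $\Ga$-factor maps, not $\Ga$-extensions.
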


Here is a quick sketch of the proof: using the ideas of Gaboriau-Lyons \cite{gaboriau-lyons} and the fact that, for free groups, all Bernoulli shifts factor onto each other \cite{bowen-ornstein-2011}, it is shown that there exists an inverse limit of factors of Bernoulli shifts which (a) has zero Rokhlin entropy and (b) factors onto all Bernoulli shifts. (By contrast, if $\Ga=\Z$ consequences of Ornstein theory imply that inverse limits and factors of Bernoulli shifts are Bernoulli \cite{ornstein-1970c, MR0447525}). Without loss of generality, we may assume $\Ga \cc (X,\mu)$ has positive Rokhlin entropy. Using Seward's recent spectacular generalization of Sinai's Factor Theorem \cite{seward-sinai} the extension $\Ga \cc (\tX,\tmu)$ is constructed as a relatively independent joining of $\Ga \cc (X,\mu)$ and this inverse limit over a common Bernoulli factor. A standard argument shows that since $\Ga \cc (X,\mu)$ is a factor of a zero entropy action, it is also a limit of zero entropy actions (see Lemma \ref{lem:2}), proving that zero entropy actions are dense . 

\subsection{Strengthenings of zero entropy}

Theorem \ref{thm:extension} highlights the fact that, if $\Ga$ is nonamenable, zero entropy actions can have positive entropy factors. So we consider the following stronger notions of zero entropy for an action $\bfa=\Ga \cc (X,\mu)$:
\begin{enumerate}
\item $\bfa$ has {\bf completely zero entropy} (this means every essentially free factor of $\bfa$ has zero Rokhlin entropy);
\item $\bfa$ is disjoint from all Bernoulli shifts over $\Ga$;
\item $\bfa$ is disjoint from all R-CPE (completely positive Rokhlin entropy) actions of $\Ga$;
\item every factor of every self-joining (including infinite self-joinings) of $\bfa$ has zero Rokhlin entropy;
\item $\bfa$ has zero naive entropy (naive entropy is defined in \S \ref{sec:naive}). 
\end{enumerate}
If $\Ga$ is amenable then all five notions agree with zero entropy. In \S \ref{sec:strengthening} it is shown that (for any group $\Ga$) $1 \Leftarrow 2$ and  $3 \Leftarrow 4 \Leftarrow 5$. Moreover, if $\Ga$ is sofic then $2 \Leftarrow 3$. It is an open problem whether all of these properties are equivalent.

To state the next result, recall that a group $\Ga$ has property MD if the measure-conjugacy class of direct product of the action of $\Ga$ on its profinite completion by left-translations with the trivial action on the unit interval is dense in the space $A(\Ga,X,\mu)$ of actions \cite{kechris-2012}. For example, free groups, surface groups and fundamental groups of hyperbolic 3-manifolds have MD (Theorem \ref{thm:md10} below). The final result shows that, for some groups, zero naive entropy is generic:
\begin{thm}\label{thm:md2}
Suppose $\Ga$ either has property MD  or has the form $\Ga = G \times H$ where $H$ is an infinite amenable residually finite group. Then the subset of all actions $\bfa \in A(\Ga,X,\mu)$ with zero naive entropy is residual in the sense of Baire category.
\end{thm}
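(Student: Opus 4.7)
The plan is to show $\{h_{naive}=0\}$ is a dense $G_\delta$ subset of $A(\Ga,X,\mu)$; this gives residuality. By the Glasner--King transfer principle it suffices to argue in $\Prob_\Ga(\Cantor^\Ga)$. For the $G_\delta$ claim: for each finite clopen partition $\cP$ of $\Cantor^\Ga$ and each finite $K \subset \Ga$, the map $\mu \mapsto H_\mu(\cP^K)$ is weak-$*$ continuous, and the collection of such $\cP$ is countable. Hence
$$\{h_{naive}=0\} \;=\; \bigcap_\cP \bigcap_n \bigcup_K \left\{\mu : \tfrac{1}{|K|}H_\mu(\cP^K)<\tfrac{1}{n}\right\}$$
is a countable intersection of open sets.

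For density in the MD case, the definition of MD gives that the conjugacy class of the action $\Ga \cc \widehat\Ga \times [0,1]$ (left translation on $\widehat\Ga$, trivial on $[0,1]$) is dense in $A(\Ga,X,\mu)$. This action is the inverse limit of the finite-quotient actions $\Ga \cc \Ga/\Ga_n \times [0,1]$ over finite-index normal $\Ga_n \lhd \Ga$, each of which has zero naive entropy: $H_\mu(\cP^K) \le [\Ga:\Ga_n]\log|\cP|$ uniformly in $K$, so $H_\mu(\cP^K)/|K|\to 0$. Naive entropy is monotone under factors and equals the supremum along inverse limits (a standard partition-approximation argument), so $h_{naive}(\Ga \cc \widehat\Ga \times [0,1])=0$ and density follows.

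For density when $\Ga = G \times H$, fix a chain $H_1 \supset H_2 \supset \ldots$ of finite-index normal subgroups of $H$ with $\bigcap_n H_n = \{e\}$. Any $\Ga$-action in which $H_n$ acts trivially factors through $\Ga/H_n = G \times (H/H_n)$ and has zero naive entropy: for $K = K_G \times K_H$ the elements of $K_H$ induce at most $[H:H_n]$ distinct transformations, so $H_\mu(\cP^K)/|K| \le [H:H_n]\log|\cP|/|K_H| \to 0$ as $|K_H|\to\infty$. To see such actions are dense, I apply an equivariant Rokhlin argument: given $\epsilon>0$ and ergodic $\mu$, use amenability of $H$ together with Weiss-style Folner tilings coming from cosets of $H_n$ (available because $H$ is amenable and residually finite) to produce a $G$-equivariant $H_n$-tower inside the $\Ga$-action; then cut and re-identify within the tower so that $H_n$ acts trivially off a set of $\mu$-measure less than $\epsilon$, while leaving the $G$-action unchanged. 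The result is a genuine $\Ga$-invariant measure within weak-$*$ distance $O(\epsilon)$ of $\mu$.

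The chief technical hurdle is executing this equivariant cut-and-identify step so that the output remains $\Ga$-invariant rather than merely $H$-invariant. Commutativity of $G$ and $H$ is essential here: $G$ permutes the $H$-orbits, and selecting the tower bases in a $G$-equivariant manner lets the modification respect both structures at once. Combined with the $G_\delta$ property, density yields the theorem by Baire category.
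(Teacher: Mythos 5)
Your $G_\delta$ argument and your MD-case density argument are essentially sound. For the $G_\delta$ step you should add a word on why intersecting over \emph{clopen} partitions suffices to capture the supremum over all finite-entropy partitions in the definition of naive entropy; this needs the Lipschitz estimate $h^{naive}_\mu(\cP)-h^{naive}_\mu(\cQ)\le H_\mu(\cP|\cQ)$ together with density of clopen partitions in the Rokhlin metric, but that is routine. In the MD case you pass through inverse limits of the finite-quotient actions, whereas the paper simply notes that MD makes the finitely supported measures dense and that any action with infinite kernel has zero naive entropy; the two routes are equivalent in substance.

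The genuine gap is in the $\Ga=G\times H$ case. You correctly identify the target (make $H_n$ act trivially, $G$-equivariantly, up to a small weak* perturbation) and you correctly flag the chief difficulty --- getting an output invariant under all of $\Ga$ rather than merely under $G\times H_n$ --- but you do not resolve it, and the Rokhlin-tower/cut-and-reidentify route you sketch is not obviously executable: choosing tower bases $G$-equivariantly is itself a nontrivial selection problem, and re-identification generally destroys invariance in exactly the way you worry about. The paper avoids measurable selection entirely by working symbolically on $\Ca^\Ga$: by Weiss's theorem, the amenable residually finite group $H$ admits right fundamental domains $F_i$ for $H_i$ forming a F\o lner sequence, and one sets $\phi_i(x)(g,h)=x(g,f)$ where $f\in F_i$ represents the coset $H_ih$. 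This $\phi_i$ is $G$-equivariant and its image is $H_i$-invariant, so $\phi_{i*}\mu$ is $G\times H_i$-invariant --- but not $\Ga$-invariant. The missing idea is to \emph{average over the fundamental domain}: the measure $\mu_i:=|F_i|^{-1}\sum_{f\in F_i}(1_G,f^{-1})_*\phi_{i*}\mu$ is genuinely $\Ga$-invariant, contains $H_i$ in its kernel (hence has zero naive entropy), and converges weak* to $\mu$ because the proportion of $f\in F_i$ with $fh_0\in F_i$ tends to $1$ by the F\o lner property. Without this averaging step, or some substitute for it, your construction does not produce a $\Ga$-invariant measure and the density claim in the product case remains unproved. (A smaller point: you treat only ergodic $\mu$; the paper's construction applies to arbitrary invariant measures, and restricting to ergodic ones would require an extra remark on why that suffices for density.)
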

It is an open problem whether this conclusion holds for every group $\Ga$. Indeed, it is unknown whether every group $\Ga$ admits an essentially free action with zero naive entropy.

The notion of weak containment of actions was introduced by Kechris \cite{kechris-2012} as an analog to weak containment of unitary representations. For a given action $\bfa$ it is an open problem whether the generic action that is  weakly equivalent to $\bfa$  has zero Rokhlin entropy. However, if $\bfa$ is a Bernoulli shift then we show this is the case in the last section \S \ref{sec:weak}.

\subsection{Organization}
\S \ref{sec:prelim} introduces notation and recalls important terminology. \S \ref{sec:rokhlin} reviews Rokhlin entropy and proves that zero Rokhlin entropy is a $G_\delta$ condition for essentially free, ergodic actions. \S \ref{sec:inverse-limit} constructs an inverse limit of factors of Bernoulli shifts that has zero Rokhlin entropy and factors onto all Bernoulli shifts. \S \ref{sec:extension} proves Theorem \ref{thm:extension}. \S \ref{sec:zero} proves Theorem \ref{thm:zero}. \S \ref{sec:naive} introduces naive entropy. \S \ref{sec:strengthening} introduces five strengthenings of zero entropy. \S \ref{sec:zne} proves Theorem \ref{thm:md2}. The last section \S \ref{sec:weak} formulates the open problem: for a given weak equivalence classes of actions, is zero entropy generic? 

{\bf Acknowledgements}. I am deeply grateful to Robin Tucker-Drob and Brandon Seward. Many of the ideas presented here were obtained during conversations with each of them, spanning over a year. Also thanks to Miklos Abert for suggesting the problem of determining whether zero entropy is generic in each weak equivalence class. And thanks to Pierre-Antoine Guih\'eneuf for the reference \cite{rokhlin-1959} and Benjy Weiss for informing me of Rudolph's result in \cite{foreman-weiss}.

\section{Preliminaries}\label{sec:prelim}

Throughout this paper, $\Ga$ always denotes a countable discrete group and $(X,\mu), (Y,\nu)$ denote standard probability spaces. We are mainly concerned with {\bf probability-measure-preserving} actions which is abbreviated as `pmp actions'. Let $\Ca$ denote the standard middle thirds Cantor set, $\Ga \cc \Ca^\Ga$ the action $(gx)(f) = x(g^{-1}f)$. This action is by homeomorphisms when $\Ca^\Ga$ is given the product topology. We let $\Prob_\Ga(\Ca^\Ga)$ denote the space of $\Ga$-invariant Borel probability measures on $\Ca^\Ga$. We give $\Prob_\Ga(\Ca^\Ga)$ the weak* topology which means that a sequence $\{\mu_n\}$ converges to a measure $\mu$ if and only if $\int f~\mu_n \to \int f~d\mu$ for every continuous function $f$ on $\Ca^\Ga$. In this topology,  $\Prob_\Ga(\Ca^\Ga)$  is compact and metrizable (by the Banach-Alaoglu Theorem). When discussing measures $\mu \in \Prob_\Ga(\Ca^\Ga)$ we say such a measure is essentially free, ergodic or has zero Rokhlin entropy to mean that the associated action $\Ga \cc (\Ca^\Ga,\mu)$ is essentially free, ergodic or has zero Rokhlin entropy.

Given a topological space $X$, a subset $Y \subset X$ is a $G_\delta$ if it can be expressed as a countable intersection of open sets. A subset $Y \subset X$ is {\bf residual} in $X$ if it contains a dense $G_\delta$ subset. If $X_0 \subset X$ then the statement `the generic element of $X$ is contained in $X_0$' means that $X_0$ is residual. 

All functions, partitions and actions considered in this paper are measurable unless explicitly stated otherwise. If $\cP$ is a partition of a measure space $(X,\mu)$, $\Ga \cc (X,\mu)$ is a pmp action and $T \subset \Ga$ is finite then $\cP^T: = \bigvee_{t\in T} t^{-1}\cP$ is the coarsest partition containing $t^{-1}\cP$ for all $t\in T$. If $T$ is infinite then $\cP^T$ is the smallest sigma-algebra containing $t^{-1}\cP$ for all $t\in T$. 

Let $\cB_X$ denote the Borel sigma-algebra on $X$. If $\cF \subset \cB_X$ is a sigma-algebra and $\cP$ is a partition then the {\bf Shannon entropy of $\cP$ relative to $\cF$} is
$$H_\mu(\cP|\cF) = \int -\log \E[\chi_{\cP(x)}|\cF](x)~d\mu(x)$$
where $\cP(x)$ denotes the part of $\cP$ containing $x$, $\chi_{\cP(x)}$ denotes the characteristic function of $\cP(x)$ and $\E[\chi_{\cP(x)}|\cF]$ denotes the conditional expectation of $\chi_{\cP(x)}$ with respect to $\cF$.

\section{Rokhlin entropy}\label{sec:rokhlin}

For any subcollection $\cF \subset \cB_X$, we let $\sigma\textrm{-alg}(\cF) \subset \cB_X$ denote the sub-sigma-algebra generated by $\cF$ and, if  $\Ga \cc X$ is a measurable action then we let $\sigma\textrm{-alg}_\Ga(\cF)$ denote the smallest sub-sigma-algebra containing $gF$ for every $g \in \Ga$ and $F \in \cF$. We do not distinguish between sigma-algebras that agree up to null sets. Thus we write $\cF_1=\cF_2$ if $\cF_1$ and $\cF_2$ agree up to null sets.

\begin{defn}
The {\bf Rokhlin entropy} of an ergodic pmp action $\Ga \cc (X,\mu)$ is defined by
$$h^{Rok}_\Ga(X,\mu) = \inf_\cP H_\mu(\cP)$$
where the infimum is over all partitions $\cP$ with $\sigma\textrm{-alg}_\Ga(\cP)=\cB_X$. For any $\Ga$-invariant $\cF \subset \cB_X$  the {\bf relative Rokhlin entropy} is defined by
$$h^{Rok}_\Ga(X,\mu|\cF)= \inf_\cP H_\mu(\cP|\cF)$$
where the infimum is over all partitions $\cP$ such $\sigma\textrm{-alg}_\Ga(\cP \cup \cF) = \cB_X$.  If $\Ga \cc (X,\mu)$ is nonergodic then the Rokhlin entropy is defined by
$$h^{Rok}_\Ga(X,\mu) = \inf_\cP H_\mu(\cP|\textrm{Inv})$$
where $\textrm{Inv}$ is the sigma-algebra of $\Ga$-invariant Borel sets. Given a collection $\cC$ of Borel subsets of $X$ then the {\bf outer Rokhlin entropy} relative to $\cF$ is defined by
$$h^{Rok}_{\Ga,\mu}(\cC|\cF) = \inf_\cP H_\mu(\cP|\cF)$$
where the infimum is over all partitions $\cP$ such that $\cC \subset \sigma\textrm{-alg}_\Ga(\cP) \vee \cF$. We also write $h^{Rok}_{\Ga,\mu}(\cC)$ instead of $h^{Rok}_{\Ga,\mu}(\cC|\cF)$ when $\cF$ is trivial. These notions were introduced and studied by B. Seward in the series \cite{seward-kreiger-1, seward-kreiger-2}. 
\end{defn}



 
 \begin{lem}\label{lem:g-delta0}
The subset of ergodic measures in $\Prob_\Ga(\Ca^\Ga)$ is a $G_\delta$ set. 
\end{lem}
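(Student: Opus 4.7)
My plan is to identify the ergodic measures with the extreme points of the convex set $\Prob_\Ga(\Ca^\Ga)$ and then invoke the classical fact that the set of extreme points of a compact metrizable convex set is $G_\delta$.

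The first step is to recall that $\mu \in \Prob_\Ga(\Ca^\Ga)$ is ergodic if and only if $\mu$ is an extreme point of $\Prob_\Ga(\Ca^\Ga)$. This is standard: any $\Ga$-invariant Borel set $B$ with $0 < \mu(B) < 1$ yields the nontrivial convex decomposition $\mu = \mu(B)\,\mu(\cdot \mid B) + (1-\mu(B))\,\mu(\cdot \mid \Ca^\Ga\setminus B)$ into two distinct $\Ga$-invariant probability measures, while conversely a decomposition $\mu = t\mu_1 + (1-t)\mu_2$ with $t\in(0,1)$ and $\mu_1\neq\mu_2$ produces, via Radon--Nikodym, a bounded $\Ga$-invariant function that is not $\mu$-a.e.\ constant.

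For the second step, write $K := \Prob_\Ga(\Ca^\Ga)$; by the remarks in \S\ref{sec:prelim} this is a compact metrizable convex subset of the space of finite signed Borel measures on $\Ca^\Ga$. Fix a compatible metric $d$ on $K$ and, for each $n\geq 1$, define
$$F_n := \Bigl\{\mu \in K : \mu = \tfrac{1}{2}(\nu_1+\nu_2) \text{ for some } \nu_1,\nu_2 \in K \text{ with } d(\nu_1,\nu_2) \geq \tfrac{1}{n}\Bigr\}.$$
The midpoint map $(\nu_1,\nu_2)\mapsto \tfrac{1}{2}(\nu_1+\nu_2)$ from $K \times K$ to $K$ is continuous, and $F_n$ is its image of the compact subset $\{(\nu_1,\nu_2)\in K\times K : d(\nu_1,\nu_2)\geq 1/n\}$, so each $F_n$ is closed. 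I will then verify by a short perturbation argument that a measure $\mu \in K$ fails to be extreme if and only if $\mu \in F_n$ for some $n$: indeed, if $\mu = t\mu_1 + (1-t)\mu_2$ with $t\in(0,1)$ and $\mu_1\neq \mu_2$, then $\mu$ is also the midpoint of $\mu \pm \epsilon(\mu_1-\mu_2)$ for all sufficiently small $\epsilon>0$, and these two measures are distinct elements of $K$. Hence the extreme points of $K$ form the complement $K\setminus\bigcup_{n} F_n$, which is $G_\delta$.

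I do not anticipate any substantive obstacle: the only point requiring care is the equivalence between ergodicity and extremality, and the rest is just the standard Choquet-type fact that extreme points of a compact convex metrizable set constitute a $G_\delta$.
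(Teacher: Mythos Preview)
Your proposal is correct and follows essentially the same route as the paper: the paper also fixes a metric on $\Prob_\Ga(\Ca^\Ga)$, defines the closed sets $X_n=\{\mu:\mu=\tfrac12(\mu_1+\mu_2),\ d(\mu_1,\mu_2)\ge 1/n\}$, and identifies the ergodic measures as the complement of $\bigcup_n X_n$. Your write-up is simply more explicit about why each $F_n$ is closed and about the equivalence between ergodicity and extremality, which the paper leaves implicit.
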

 
 \begin{proof}
 This is well-known. Here is a short proof for the reader's convenience. Fix a metric $d$ on $\Prob_\Ga(\Ca^\Ga)$. For $n=1,2,3,\ldots$, let $X_n$ be the set of all measures $\mu \in  \Prob_\Ga(\Ca^\Ga)$ such that there exist measures $\mu_1,\mu_2 \in \Prob_\Ga(\Ca^\Ga)$ with $d(\mu_1,\mu_2) \ge 1/n$ and $\mu= \frac{\mu_1+\mu_2}{2}$. So $X_n$ is a closed subset and $\cup_{n=1}^\infty X_n$ is an $F_\sigma$ set. The lemma now follows from the fact that the subset of ergodic measures is the complement of $\cup_{n=1}^\infty X_n$.
 \end{proof}

Next we prove that the set of ergodic measures in $\Prob_\Ga(\Ca^\Ga)$ with zero Rokhlin entropy form a $G_\delta$ subset. For the next three lemmas we assume $\Ga \cc (X,\mu)$ is an ergodic pmp action and $\cP,\cQ$ are measurable partitions of $X$ with finite Shannon entropy.

\begin{lem}\label{lem:estimate}
$$h^{Rok}_{\Ga,\mu}(\cP) \le H_\mu(\cQ) + H_\mu(\cP | \sigma\textrm{-alg}_\Ga(\cQ)).$$
\end{lem}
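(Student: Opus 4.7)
The plan is to exhibit, for every $\epsilon > 0$, a single partition $\cR$ that (i) generates, together with its $\Ga$-translates, a $\sigma$-algebra containing $\cP$, and (ii) has Shannon entropy at most $H_\mu(\cQ) + H_\mu(\cP \mid \sigma\textrm{-alg}_\Ga(\cQ)) + \epsilon$. Once this is done, the definition of outer Rokhlin entropy yields $h^{Rok}_{\Ga,\mu}(\cP) \le H_\mu(\cR)$, and letting $\epsilon \to 0$ gives the result.

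The construction of $\cR$ will be straightforward once we invoke the following standard relative-entropy fact (developed systematically in Seward's series \cite{seward-kreiger-1, seward-kreiger-2}): for any countable partition $\cP$ with $H_\mu(\cP) < \infty$, any sub-$\sigma$-algebra $\cF \subset \cB_X$, and any $\epsilon > 0$, there exists a countable partition $\cP'$ such that
\[
H_\mu(\cP') \le H_\mu(\cP \mid \cF) + \epsilon
\qquad\text{and}\qquad
\cP \subset \cF \vee \sigma\textrm{-alg}(\cP').
\]
Morally this says that the ``new information'' in $\cP$ beyond $\cF$ can be repackaged as a single partition whose unconditional entropy is essentially the conditional entropy; the construction proceeds by choosing a finite partition $\cA \subset \cF$ with $H_\mu(\cP \mid \cA)$ close to $H_\mu(\cP \mid \cF)$, and within each atom of $\cA$ relabelling the parts of $\cP \vee \cA$ so that the resulting partition depends only on ``which refinement class'' one lies in, not on the $\cA$-atom.

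Applying this to $\cF := \sigma\textrm{-alg}_\Ga(\cQ)$ produces a partition $\cP'$ with $H_\mu(\cP') \le H_\mu(\cP \mid \sigma\textrm{-alg}_\Ga(\cQ)) + \epsilon$ and $\cP \subset \sigma\textrm{-alg}_\Ga(\cQ) \vee \sigma\textrm{-alg}(\cP')$. Setting $\cR := \cQ \vee \cP'$, subadditivity of Shannon entropy gives
\[
H_\mu(\cR) \le H_\mu(\cQ) + H_\mu(\cP') \le H_\mu(\cQ) + H_\mu(\cP \mid \sigma\textrm{-alg}_\Ga(\cQ)) + \epsilon,
\]
while
\[
\sigma\textrm{-alg}_\Ga(\cR) \;\supseteq\; \sigma\textrm{-alg}_\Ga(\cQ) \vee \sigma\textrm{-alg}(\cP') \;\supseteq\; \cP,
\]
so $\cR$ is admissible in the infimum defining $h^{Rok}_{\Ga,\mu}(\cP)$.

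The only substantive step is the repackaging lemma producing $\cP'$; everything else is a one-line application of subadditivity and the definition. Since that lemma is already part of the machinery the paper relies on (via the Seward references), it can simply be cited, and the bulk of the written proof will be the two displays above followed by taking $\epsilon \to 0$.
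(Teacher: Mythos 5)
Your reduction to the ``repackaging lemma'' is clean, but that lemma is false as you have stated it, and this is a genuine gap. Here is a counterexample with no group in sight. Let $X=[0,1]$ with Lebesgue measure, let $\cP=\{[0,\tfrac14),[\tfrac14,\tfrac12),[\tfrac12,1]\}$, and let $\cF$ be the pullback of the Borel $\sigma$-algebra under the map that sends $x\mapsto x+\tfrac14$ on $[0,\tfrac14)$ and fixes $[\tfrac14,1]$; the fibers of $\cF$ are the pairs $\{x,x+\tfrac14\}$ for $x\in[0,\tfrac14)$ and singletons on $[\tfrac12,1]$, so $H_\mu(\cP\mid\cF)=\tfrac12\log 2$. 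Any countable partition $\cP'$ with $\cP\subset\cF\vee\sigma\textrm{-alg}(\cP')$ must separate $x$ from $x+\tfrac14$ for a.e.\ $x\in[0,\tfrac14)$, so each part of $\cP'$ meets $[0,\tfrac12)$ in measure at most $\tfrac14$; a short majorization argument then forces $H_\mu(\cP')\ge H(\tfrac34,\tfrac14)=2\log 2-\tfrac34\log 3>\tfrac12\log 2$. The same example exposes why the relabelling sketch cannot work: merging the $n$-th part of $\cP$ across the atoms of $\cA$ produces a partition whose probability vector is the $\mu(A)$-weighted average of the conditional vectors, and by \emph{concavity} of Shannon entropy that average has entropy at least $H_\mu(\cP\mid\cA)$ --- the inequality points the wrong way, and is strict unless the conditional distributions agree on a.e.\ atom.

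The statement that is actually true, and that the paper leans on, replaces $\sigma\textrm{-alg}(\cP')$ by $\sigma\textrm{-alg}_\Ga(\cP')$: under suitable hypotheses on the action (aperiodicity) and with $\cF$ $\Ga$-invariant, one can find $\cP'$ with $H_\mu(\cP')\le H_\mu(\cP\mid\cF)+\epsilon$ and $\cP\subset\cF\vee\sigma\textrm{-alg}_\Ga(\cP')$. The $\Ga$-translates are essential --- they allow the encoding of $\cP$ to be spread along orbits --- and establishing this is the hard technical core of Seward's subadditivity machinery. That is precisely what Corollary 2.6 of \cite{seward-kreiger-2} packages, and the paper's proof of the present lemma is a one-line citation of it: $h^{Rok}_{\Ga,\mu}(\cP)\le h^{Rok}_{\Ga,\mu}(\cQ)+h^{Rok}_{\Ga,\mu}(\cP\mid\sigma\textrm{-alg}_\Ga(\cQ))\le H_\mu(\cQ)+H_\mu(\cP\mid\sigma\textrm{-alg}_\Ga(\cQ))$. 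Note that with the corrected repackaging statement your construction does go through verbatim, since $\cR=\cQ\vee\cP'$ satisfies $\sigma\textrm{-alg}_\Ga(\cR)\supseteq\sigma\textrm{-alg}_\Ga(\cQ)\vee\sigma\textrm{-alg}_\Ga(\cP')\supseteq\cP$; so what is missing is not the assembly but the correct form, and a correct proof, of the lemma you are citing.
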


\begin{proof}
Corollary 2.6 of \cite{seward-kreiger-2} implies
$$h^{Rok}_{\Ga,\mu}(\cP) \le h^{Rok}_{\Ga,\mu}(\cQ) +  h^{Rok}_{\Ga,\mu}(\cP| \sigma\textrm{-alg}_\Ga(\cQ)) \le H_\mu(\cQ) + H_\mu(\cP | \sigma\textrm{-alg}_\Ga(\cQ)).$$
\end{proof}

Let $\sP(X,\mu)$ denote the set of all partitions of $(X,\mu)$ with finite Shannon entropy in which we identify partitions that agree up to measure zero. Given partitions $\cP,\cQ \in \sP(X,\mu)$ define
$$d^{Rok}(\cP,\cQ):= H_\mu(\cP|\cQ)+H_\mu(\cQ|\cP).$$
This is the {\bf Rokhlin metric}. It is a complete separable metric on $\sP(X,\mu)$. 
 
\begin{lem}\label{lem:outer1}
Let $\sD$ be a dense subset of $\sP(X,\mu)$. Then
$$h^{Rok}_{\Ga,\mu}(\cP) = \sup_{\epsilon>0} \inf \{  H_\mu(\cQ):~ \cQ \in \sD, ~H_\mu(\cP|\sigma\textrm{-alg}_\Ga(\cQ)) <  \epsilon\}.$$
\end{lem}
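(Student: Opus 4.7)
My plan is to prove the two inequalities separately.

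\textbf{The easy direction} ($h^{Rok}_{\Ga,\mu}(\cP)\leq$ RHS) is an immediate consequence of Lemma \ref{lem:estimate}. For any $\cQ\in\sD$ with $H_\mu(\cP\mid\sigma\text{-alg}_\Ga(\cQ))<\epsilon$, Lemma \ref{lem:estimate} gives $h^{Rok}_{\Ga,\mu}(\cP)\le H_\mu(\cQ)+\epsilon$. Taking the infimum over such $\cQ$ yields $h^{Rok}_{\Ga,\mu}(\cP)-\epsilon\le\inf\{H_\mu(\cQ):\cQ\in\sD,\ H_\mu(\cP\mid\sigma\text{-alg}_\Ga(\cQ))<\epsilon\}$, and letting $\epsilon\to 0^{+}$ finishes this direction. (If the set on the right is empty, the infimum is $+\infty$ and the inequality is trivial.)

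\textbf{The harder direction} (RHS $\leq h^{Rok}_{\Ga,\mu}(\cP)$) requires the density hypothesis. Fix $\epsilon>0$ and $\delta>0$. By the definition of outer Rokhlin entropy, choose a (not necessarily dense) partition $\cR\in\sP(X,\mu)$ with $H_\mu(\cR)\le h^{Rok}_{\Ga,\mu}(\cP)+\delta$ and $\cP\subset\sigma\text{-alg}_\Ga(\cR)$, so that $H_\mu(\cP\mid\sigma\text{-alg}_\Ga(\cR))=0$. Enumerate $\Ga$ and let $T_n\uparrow\Ga$ be an increasing sequence of finite subsets; by the martingale convergence theorem,
\[
H_\mu(\cP\mid\cR^{T_n})\;\xrightarrow[n\to\infty]{}\;H_\mu(\cP\mid\sigma\text{-alg}_\Ga(\cR))=0.
\]
Pick $n$ large enough that $H_\mu(\cP\mid\cR^{T_n})<\epsilon/3$ and set $T:=T_n$.

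\textbf{The crux} is transferring Rokhlin-closeness to $\cR$ into smallness of $H_\mu(\cP\mid\sigma\text{-alg}_\Ga(\cQ))$; this is where the finite-window $T$ is essential. Using density of $\sD$, choose $\cQ\in\sD$ with $d^{Rok}(\cR,\cQ)$ so small that simultaneously
\[
|T|\cdot d^{Rok}(\cR,\cQ)<\epsilon/3,\qquad d^{Rok}(\cR,\cQ)<\delta.
\]
By $\Ga$-invariance of $\mu$ and subadditivity of conditional entropy,
\[
H_\mu(\cR^{T}\mid\cQ^{T})\le\sum_{t\in T}H_\mu(t^{-1}\cR\mid t^{-1}\cQ)=|T|\cdot H_\mu(\cR\mid\cQ)\le|T|\cdot d^{Rok}(\cR,\cQ)<\epsilon/3.
\]
Combined with the standard chain-rule bound
\[
H_\mu(\cP\mid\cQ^{T})\le H_\mu(\cR^{T}\mid\cQ^{T})+H_\mu(\cP\mid\cR^{T}\vee\cQ^{T})\le H_\mu(\cR^{T}\mid\cQ^{T})+H_\mu(\cP\mid\cR^{T}),
\]
and the monotonicity $H_\mu(\cP\mid\sigma\text{-alg}_\Ga(\cQ))\le H_\mu(\cP\mid\cQ^{T})$, we obtain $H_\mu(\cP\mid\sigma\text{-alg}_\Ga(\cQ))<2\epsilon/3<\epsilon$. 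Since the Rokhlin metric controls Shannon entropy, $H_\mu(\cQ)\le H_\mu(\cR)+d^{Rok}(\cR,\cQ)\le h^{Rok}_{\Ga,\mu}(\cP)+2\delta$. Hence the infimum in the statement is at most $h^{Rok}_{\Ga,\mu}(\cP)+2\delta$ for every $\delta>0$, and since $\epsilon>0$ was arbitrary, $\sup_{\epsilon>0}\inf\{\cdots\}\le h^{Rok}_{\Ga,\mu}(\cP)$, completing the proof.

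\textbf{Main obstacle.} The only nontrivial issue is that Rokhlin-closeness of $\cQ$ to $\cR$ does not by itself give closeness of the $\Ga$-invariant sigma-algebras they generate. The finite-truncation trick, combined with the linear-in-$|T|$ bound on $H_\mu(\cR^T\mid\cQ^T)$, is what lets us replace an infinite-window control problem with a finite-window one that Rokhlin density can handle.
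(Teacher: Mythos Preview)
Your proof is correct and follows essentially the same approach as the paper's: both argue the $\le$ direction via Lemma~\ref{lem:estimate}, and for the $\ge$ direction both pick a near-optimal partition, truncate to a finite window $T\subset\Ga$ so that $H_\mu(\cP\mid\cR^{T})$ is small, then use density of $\sD$ together with the linear bound $H_\mu(\cR^{T}\mid\cQ^{T})\le |T|\,H_\mu(\cR\mid\cQ)$ to transfer the estimate to a $\cQ\in\sD$. The only cosmetic difference is that you separate the roles of $\epsilon$ and $\delta$, whereas the paper uses a single parameter throughout.
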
 

\begin{proof}
The inequality $\le$ follows from Lemma \ref{lem:estimate}. To see the opposite inequality, let $\epsilon>0$ and let $\cS$ be a partition with $\cP \subset \sigma\textrm{-alg}_\Ga(\cS)$ and $H_\mu(\cS) \le h^{Rok}_{\Ga,\mu}(\cP) + \epsilon$. Since $H_\mu(\cP| \cS^\Ga)=0$, there exists a finite subset $F \subset \Ga$ such that $H_\mu(\cP | \cS^F) < \epsilon/2.$ Since $\sD$ is dense, there exists a partition $\cR \in \sD$ such that $d^{Rok}(\cR,\cS)<\epsilon |F|^{-1}/2$. Since
$$H_\mu(\cR^F|\cS^F) \le \sum_{f\in F} H_\mu(f^{-1}\cR| \cS^F) \le  \sum_{f\in F} H_\mu(f^{-1}\cR| f^{-1}\cS) = |F|H_\mu(\cR|\cS),$$
$$d^{Rok}(\cR^F,\cS^F) \le |F| d^{Rok}(\cR,\cS) < \epsilon/2.$$
Therefore,
$$H_\mu(\cP | \sigma\textrm{-alg}_\Ga(\cR)) \le H_\mu(\cP | \cR^F) \le H_\mu(\cP | \cS^F) + d^{Rok}(\cS^F,\cR^F) < \epsilon.$$
It follows that  
 $$\inf \{  H_\mu(\cQ):~ \cQ \in \sD, ~H(\cP|\sigma\textrm{-alg}_\Ga(\cQ)) <  \epsilon\} \le H_\mu(\cR) \le H_\mu(\cS) + d^{Rok}(\cR,\cS) \le h^{Rok}_{\Ga,\mu}(\cP) +2 \epsilon.$$
 The Lemma follows by taking the limit as $\epsilon \searrow 0$ on both sides. 
\end{proof}

\begin{lem}\label{lem:outer2}
Suppose $\cP_1 \le \cP_2\le \cdots$ are an increasing sequence of partitions of $(X,\mu)$ with finite Shannon entropy such that $\bigvee_n \cP_n$ is the Borel sigma-algebra. Then $h^{Rok}_\Ga(X,\mu) =0$ if and only if $h^{Rok}_{\Ga,\mu}(\cP_n)=0$ for all $n$.
\end{lem}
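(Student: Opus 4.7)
My plan is as follows. The forward direction is immediate: any countable partition $\cR$ with $\sigma\textrm{-alg}_\Ga(\cR)=\cB_X$ satisfies $\cP_n\subset\cB_X=\sigma\textrm{-alg}_\Ga(\cR)$ for every $n$, so $\cR$ is admissible in the infimum defining $h^{Rok}_{\Ga,\mu}(\cP_n)$. Taking the infimum over such $\cR$ yields $h^{Rok}_{\Ga,\mu}(\cP_n)\le h^{Rok}_\Ga(X,\mu)=0$ for every $n$.

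For the converse, fix $\epsilon>0$. Using the hypothesis $h^{Rok}_{\Ga,\mu}(\cP_n)=0$, I would pick for each $n$ a countable partition $\cQ_n$ with $\cP_n\subset\sigma\textrm{-alg}_\Ga(\cQ_n)$ and $H_\mu(\cQ_n)<\epsilon\cdot 2^{-n}$. Then form the common refinement $\cQ:=\bigvee_{n=1}^\infty \cQ_n$, i.e.\ the partition whose atoms are the nonempty intersections $\bigcap_n Q_n^{i_n}$ with $Q_n^{i_n}\in\cQ_n$. Because $\cQ$ refines every $\cQ_n$, we have $\sigma\textrm{-alg}_\Ga(\cQ)\supset\sigma\textrm{-alg}_\Ga(\cQ_n)\supset\sigma\textrm{-alg}(\cP_n)$ for every $n$, so
\[
\sigma\textrm{-alg}_\Ga(\cQ)\ \supset\ \bigvee_{n=1}^\infty \sigma\textrm{-alg}(\cP_n)\ =\ \cB_X,
\]
and $\cQ$ is a generating partition. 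Subadditivity of Shannon entropy applied to finite joins gives
\[
H_\mu\Bigl(\bigvee_{n=1}^N\cQ_n\Bigr)\ \le\ \sum_{n=1}^N H_\mu(\cQ_n)\ <\ \epsilon
\]
for every $N$, and the monotone convergence of Shannon entropy along an increasing sequence of countable partitions then yields $H_\mu(\cQ)=\lim_{N\to\infty}H_\mu\bigl(\bigvee_{n=1}^N\cQ_n\bigr)\le\epsilon$. Consequently $h^{Rok}_\Ga(X,\mu)\le H_\mu(\cQ)<\epsilon$, and letting $\epsilon\to 0$ finishes the proof.

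The step I expect to require the most care is the justification that $\cQ$ can legitimately be treated as a countable partition so that $H_\mu(\cQ)$ is a well-defined Shannon entropy: a priori a countable join of countable partitions can have uncountably many atoms, but the uniform bound $H_\mu\bigl(\bigvee_{n=1}^N \cQ_n\bigr)<\epsilon$ on the approximating finite joins forces $\cQ$ to have only countably many atoms of positive measure, so modulo null sets it is a countable partition and the formula $H_\mu(\cQ)=-\sum_{A\in\cQ}\mu(A)\log\mu(A)$ is meaningful. Once this observation is in hand the monotone passage to the limit is routine.
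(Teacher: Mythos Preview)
Your argument is correct and follows essentially the same route as the paper: pick $\cQ_n$ with $H_\mu(\cQ_n)<\epsilon\,2^{-n}$ and $\cP_n\subset\sigma\textrm{-alg}_\Ga(\cQ_n)$, form $\cQ=\bigvee_n\cQ_n$, and conclude that $\cQ$ is generating with $H_\mu(\cQ)\le\epsilon$. The paper simply asserts the last step without comment, whereas you spell out the subadditivity, the monotone limit, and the (genuine) point that $\cQ$ has only countably many non-null atoms; these details are all fine. One cosmetic slip: after writing $H_\mu(\cQ)\le\epsilon$ you immediately claim $H_\mu(\cQ)<\epsilon$, but since $\epsilon$ is arbitrary this is harmless.
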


\begin{proof}
The definitions of Rokhlin and outer Rokhlin entropy imply $h^{Rok}_\Ga(X,\mu) \ge  h^{Rok}_{\Ga,\mu}(\cP_n)$ for every $n$. This proves one implication. To see the other, suppose $h^{Rok}_{\Ga,\mu}(\cP_n)=0$ for all $n$. Let $\epsilon>0$. For every $n$, there exists a partition $\cQ_n$ of $X$ such that $H_\mu(\cQ_n)<\epsilon 2^{-n}$ and $\cP_n \subset \sigma\textrm{-alg}_\Ga(\cQ_n)$. Therefore, $\bigvee_n \cQ_n$ is generating and has entropy $<\epsilon$. This shows $h^{Rok}_\Ga(X,\mu) < \epsilon$. Since $\epsilon$ is arbitrary, $h^{Rok}_\Ga(X,\mu)  = 0$. 
 \end{proof}

\begin{lem}\label{lem:g-delta1}
The set
$$E_0:=\{ \mu \in \Prob_\Ga(\Cantor^\Ga):~ h^{Rok}_\Ga(\Ca^\Ga,\mu) =0~ \textrm{and}~\Ga \cc (\Ca^\Ga,\mu)~\textrm{ergodic}\}$$
is a $G_\delta$ set.
\end{lem}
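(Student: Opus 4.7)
The plan is to combine Lemma~\ref{lem:outer1} and Lemma~\ref{lem:outer2} with the $L^1$-martingale approximation of conditional expectations to rewrite the condition $h^{Rok}_\Ga(\Ca^\Ga,\mu)=0$ as a countable conjunction of weak*-open conditions on $\mu$. Intersecting with the ergodic $G_\delta$ from Lemma~\ref{lem:g-delta0} then yields the desired $G_\delta$ description of $E_0$.

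First I would fix an increasing sequence $\cP_1 \le \cP_2 \le \cdots$ of finite clopen partitions of $\Ca^\Ga$ whose join is the full Borel $\sigma$-algebra (for instance, the coordinate partitions coming from finer and finer clopen partitions of $\Ca$ on larger and larger finite subsets of $\Ga$), together with the countable collection $\sD$ of all finite clopen partitions of $\Ca^\Ga$. This $\sD$ is $d^{Rok}$-dense in $\sP(\Ca^\Ga,\mu)$ simultaneously for \emph{every} $\mu \in \Prob_\Ga(\Ca^\Ga)$, since clopen sets are $L^1(\mu)$-dense in the Borel $\sigma$-algebra and $d^{Rok}$ is controlled by the $L^1$-distance between cells. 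For any $\cQ \in \sD$ and finite $F \subset \Ga$ the join $\cQ^F$ is again a finite clopen partition, so the measures of its cells vary continuously in $\mu$; it follows that $\mu \mapsto H_\mu(\cQ)$ and $\mu \mapsto H_\mu(\cP_n|\cQ^F) = H_\mu(\cP_n \vee \cQ^F) - H_\mu(\cQ^F)$ are continuous on $\Prob_\Ga(\Ca^\Ga)$.

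Next I would use the standard identity $H_\mu(\cP_n|\sigma\textrm{-alg}_\Ga(\cQ)) = \inf_F H_\mu(\cP_n|\cQ^F)$, with $F$ ranging over finite subsets of $\Ga$, to replace the infinite $\sigma$-algebra appearing in Lemma~\ref{lem:outer1} by finite joins. Unpacking Lemmas~\ref{lem:outer1} and \ref{lem:outer2}, an ergodic $\mu$ lies in $E_0$ iff for every $n \in \N$ and every rational $\epsilon, \delta > 0$ there exist $\cQ \in \sD$ and finite $F \subset \Ga$ with $H_\mu(\cQ) < \delta$ and $H_\mu(\cP_n|\cQ^F) < \epsilon$. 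Each such $(n,\epsilon,\delta)$-condition is a countable union of open sets, hence open, and intersecting over the countably many triples $(n,\epsilon,\delta)$ and with the $G_\delta$ set of ergodic measures exhibits $E_0$ as a $G_\delta$.

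The main obstacle is bookkeeping rather than substance: one needs a single countable family $\sD$ of partitions that is Rokhlin-dense in $\sP(\Ca^\Ga,\mu)$ uniformly in $\mu$ (so that Lemma~\ref{lem:outer1} applies simultaneously to every measure), and one needs to replace the non-continuous quantity $H_\mu(\cP_n|\sigma\textrm{-alg}_\Ga(\cQ))$ by its finite-window approximations $H_\mu(\cP_n|\cQ^F)$, which are continuous in $\mu$. Once these two reductions are in place the $G_\delta$ conclusion is immediate.
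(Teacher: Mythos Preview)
Your proposal is correct and is essentially the paper's own argument: fix an increasing sequence of clopen partitions $\cP_n$, take $\sD$ to be the clopen partitions (dense in $\sP(\Ca^\Ga,\mu)$ for every $\mu$), use continuity of $\mu\mapsto H_\mu(\cQ)$ and $\mu\mapsto H_\mu(\cP_n|\cQ^F)$ for clopen $\cQ$ and finite $F$, and then apply Lemmas~\ref{lem:outer1} and~\ref{lem:outer2} to write $E_0$ as a countable intersection of open sets intersected with the ergodic $G_\delta$. The only cosmetic differences are that the paper uses a single parameter $\epsilon$ in place of your $(\epsilon,\delta)$ pair, and that you make the replacement $H_\mu(\cP_n|\sigma\textrm{-alg}_\Ga(\cQ))=\inf_F H_\mu(\cP_n|\cQ^F)$ explicit whereas the paper folds it directly into the definition of its open sets $\sO(\epsilon)$.
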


\begin{proof}

Let $\cP_n$ be an increasing sequence of finite partitions of $\Ca^\Ga$ such that all elements of $\cP_n$ are clopen (=closed and open) and $\bigvee_n \cP_n$ is the full Borel sigma-algebra. Let
$$E_n:=\{ \mu \in \Prob_\Ga(\Cantor^\Ga):~ h^{Rok}_{\Ga,\mu}(\cP_n) =0~ \textrm{and}~\Ga \cc (\Ca^\Ga,\mu)~\textrm{ergodic}\}.$$
By Lemma \ref{lem:outer2}, $E_0 = \cap_n E_n$. So it suffices to show each $E_n$ is a $G_\delta$. Let $\sD$ denote the collection of clopen partitions of $\Ca^\Ga$. Then $\sD$ is dense in $\sP(\Ca^\Ga,\mu)$ for every Borel probability measure $\mu$. For any $\cQ \in \sD$ and finite $F \subset \Ga$, the maps $\mu \mapsto H_\mu(\cQ)$ and $\mu \mapsto H_\mu(\cP_n | \cQ^F)$ are continuous (because all partitions involved are clopen). So for any $\epsilon>0$, the set
$$\{ \mu \in \Prob_\Ga(\Cantor^\Ga):~ H_\mu(\cP_n | \cQ^F) < \epsilon\}$$
is open. Let $\sO(\epsilon)$ denote the set of all measures $\mu \in \Prob_\Ga(\Ca^\Ga)$ such that there exist $\cQ \in \sD$ and finite $F \subset \Ga$ with $H_\mu(\cQ)<\epsilon$ and $H_\mu(\cP_n | \cQ^F) < \epsilon$. Then $\sO(\epsilon)$ is open. By Lemma \ref{lem:outer1},
$$E_n  = \bigcap_{m=1}^\infty \sO(1/m) \cap \{\mu:~ \Ga \cc (\Ca^\Ga,\mu)~\textrm{ergodic}\}.$$
By Lemma \ref{lem:g-delta0}, this implies $E_n$ is a $G_\delta$. 

\end{proof}

\section{A zero entropy action that surjects every Bernoulli shift}\label{sec:inverse-limit}       

Bernoulli shifts are defined as follows: let $(K,\kappa)$ denote a standard probability space and $(K,\kappa)^\Ga$ the product measure space. Let $\Ga$ act on $K^\Ga$ by $(gx)(f)=x(g^{-1}f)$ for $x \in K^\Ga, g,f \in \Ga$. This action is measure-preserving and is called the {\bf Bernoulli shift over $\Ga$ with base $(K,\kappa)$}.

This section constructs a zero Rokhlin entropy action that factors onto all Bernoulli shifts (assuming $\Ga$ is nonamenable). The main part of the argument is in the next proposition: that there are factors of Bernoulli shifts with small entropy that factor onto all Bernoulli shifts. 

\begin{prop}\label{prop:small-ext}
Let $\Ga$ be a countable nonamenable group. Then for every $\epsilon>0$ there exists a pmp action $\Ga \cc (Y,\nu)$ satisfying:
\begin{itemize}
\item $\Ga \cc (Y,\nu)$ is a factor of a Bernoulli shift,
\item $h^{Rok}_\Ga(Y,\nu)<\epsilon$,
\item $\Ga \cc (Y,\nu)$ factors onto every Bernoulli shift over $\Ga$.
\end{itemize}
\end{prop}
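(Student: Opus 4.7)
The plan is to combine two ingredients: the Gaboriau-Lyons theorem, which exhibits a free ergodic $F_2$-action inside the orbit equivalence of a Bernoulli $\Ga$-shift; and my theorem \cite{bowen-ornstein-2011} that all Bernoulli shifts over a nonabelian free group factor onto each other.

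First I would invoke Gaboriau-Lyons \cite{gaboriau-lyons} to obtain, for some base $(K,\kappa)$, the Bernoulli shift $\Ga \cc (K,\kappa)^\Ga =: \Ga \cc (X,\mu)$ together with a free ergodic pmp action $F_2 \cc (X,\mu)$ whose orbit equivalence relation is contained in that of $\Ga$. Let $\sigma: \Ga \times X \to F_2$ be the associated Borel cocycle, defined almost everywhere by the property that $\sigma(g,x) \in F_2$ is the unique element sending $x$ to $gx$ within the $F_2$-orbit structure.

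Next, for the prescribed $\epsilon$, I would use Bowen's theorem to produce a Bernoulli $F_2$-shift $F_2 \cc (L,\lambda)^{F_2}$ with $H(\lambda) < \epsilon$ that factors onto every $F_2$-Bernoulli, and an $F_2$-equivariant factor map $\pi: F_2 \cc (X,\mu) \to F_2 \cc (L,\lambda)^{F_2}$. If the $F_2$-action on $(X,\mu)$ is itself Bernoulli this is immediate; otherwise one first uses Seward's Sinai-type factor theorems to extract an $F_2$-Bernoulli factor, and then applies Bowen's theorem.

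Then I would construct $(Y,\nu)$ as the $\Ga$-equivariant envelope of $\pi$: namely, $x \in X$ is sent to the $L$-valued labeling of its $F_2$-orbit induced by $\pi$, and $\Ga$ acts on such labelings by permuting coordinates via the cocycle $\sigma$. The resulting quotient $(Y,\nu)$ is a $\Ga$-factor of the Bernoulli shift $(X,\mu)$, with a natural generating partition of Shannon entropy at most $H(\lambda) < \epsilon$, yielding $h^{Rok}_\Ga(Y,\nu) < \epsilon$. For the factoring-onto-every-Bernoulli property, given any Bernoulli shift $\Ga \cc (J,\iota)^\Ga$, realize it on $(X,\mu)$ via an orbit-equivalent conjugate of $\Ga \cc (X,\mu)$; its restriction to the $F_2$-subrelation is then an $F_2$-Bernoulli, onto which $(L,\lambda)^{F_2}$ factors by Bowen, and this $F_2$-equivariant map lifts through $\sigma$ to a $\Ga$-equivariant factor from $(Y,\nu)$. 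The hardest step, and the main obstacle, is this last transfer --- promoting $F_2$-equivariance to $\Ga$-equivariance via the cocycle while controlling measurability and the resulting quotient measure --- where the technical heart of the Gaboriau-Lyons construction is needed.
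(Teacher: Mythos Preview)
Your proposal has a genuine gap at the step where you claim $(Y,\nu)$ factors onto every $\Gamma$-Bernoulli shift. Applying an $F_2$-equivariant factor map (from \cite{bowen-ornstein-2011}) ``along each $F_2$-orbit'' to produce a $\Gamma$-equivariant map requires knowing the cocycle $c:F_2 \times X \to \Gamma$ --- equivalently, knowing how the $\Gamma$-orbit of a point is partitioned into $F_2$-orbits. But that cocycle is a function of $x \in X = (K,\kappa)^\Gamma$, not of its image in $(Y,\nu)$. Your $(Y,\nu)$ is generated only by the $L$-valued observable $x \mapsto \pi(x)(1_{F_2})$, which has small entropy precisely because it forgets most of $x$; in particular it forgets the treeing. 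If you enlarge $(Y,\nu)$ to remember the cocycle, you lose the entropy bound (there is no reason the Gaboriau--Lyons treeing, as a function of the full Bernoulli shift, should be encodable with entropy $<\epsilon$). A secondary issue: your cocycle $\sigma:\Gamma \times X \to F_2$ is not even well-defined, since for general $g \in \Gamma$ the point $gx$ need not lie in the $F_2$-orbit of $x$; the cocycle only goes the other way, $F_2 \times X \to \Gamma$.

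The paper resolves this by separating the two roles. It first constructs, via iterated Bernoulli bond percolation on a Cayley graph of a finitely generated nonamenable subgroup, a factor $\Gamma \curvearrowright (X,\mu)$ of a Bernoulli shift with $h^{Rok}_\Gamma(X,\mu)<\epsilon/2$ whose orbit-equivalence relation still contains a non-hyperfinite treeable subequivalence relation (the infinite clusters of the sparse percolation are trees with infinitely many ends). This is where the Gaboriau--Lyons machinery enters, but opened up rather than treated as a black box: the iteration drives the edge density down so that a generating partition has small Shannon entropy, while retaining the treeing. Then $(Y,\nu)$ is taken to be the product $X \times K^\Gamma$ with a base $(K,\kappa)$ of entropy $<\epsilon/2$. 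Now the cocycle depends only on the low-entropy coordinate $x$, and the fresh Bernoulli coordinate $y \in K^\Gamma$ supplies the raw material on which the $F_2$-factor map acts (Lemma~\ref{lem:GL}). The point you are missing is that one must build a low-entropy action that \emph{itself} carries the treeable subequivalence relation; the percolation argument is exactly what achieves this.
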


The proof uses the fact that, for nonabelian free groups, all Bernoulli shifts factor onto each other. In order to apply this we need some concepts from measured equivalence relations. So: given an action $\Ga\cc (X,\mu)$ the {\bf orbit-equivalence relation} is the relation $\cR_\Ga:=\{(x,\ga x) \in X \times X:~x\in X, \ga \in \Ga\}$. A {\bf subequivalence relation} is any measurable subset $\cS \subset \cR_\Ga$ that is an equivalence relation in its own right. It is {\bf finite} if for almost every $x\in X$ the $\cS$-class of $x$ is finite. It {\bf hyperfinite} if there exists an increasing sequence $\cS_1 \subset \cS_2 \subset \cdots$ of finite subequivalence relations such that $\cS= \cup_i \cS_i$. A subset $Y \subset X$ is {\bf $\cS$-saturated} if $Y$ is a union of $\cS$-equivalence classes. The subequivalence $\cS$ is {\bf ergodic} if every measurable $\cS$-saturated subset is either null or co-null. A {\bf graphing} of $\cS$ is a subset $\cG \subset\cS$ such that 
\begin{itemize}
\item $(x,y) \in \cG \Rightarrow (y,x) \in \cG$;
\item for every $(x,y) \in \cS$ there exists $x=x_0,x_1,\ldots, x_n=y$ such  $(x_i,x_{i+1}) \in \cG$ for all $0\le i <n$.
\end{itemize}
A graphing $\cG$ determines a graph with vertex set $X$ and edges consisting of unordered pairs $\{x,y\}$ such that $(x,y) \in \cG$. If the connected components of this graph are trees then $\cG$ is called a {\bf treeing} and $\cS$ is said to be {\bf treeable}. Intuitively, graphings are treated in a manner similar to Cayley graphs and treeable subequivalence relations are analogous to free subgroups.

\begin{lem}\label{lem:GL}
Let $\Ga \cc (X,\mu)$ be an essentially free factor of a Bernoulli shift and suppose that its orbit-equivalence relation contains a non-hyperfinite treeable subequivalence relation $\cS$. Then for every pair of probability spaces $(K,\kappa), (L,\lambda)$ the direct product action 
$$\Ga \cc (X \times K^\Ga, \mu \times \kappa^\Ga)$$
factors onto the Bernoulli shift $\Ga \cc (L,\lambda)^\Ga$. 
\end{lem}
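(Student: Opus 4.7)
My plan is to combine three ingredients: the Gaboriau--Lyons theorem (producing a nonabelian free subrelation), my earlier theorem that all Bernoulli shifts over a nonabelian free group factor onto each other, and the co-induction formalism that converts $F$-equivariance into $\Ga$-equivariance.

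First I would apply the Gaboriau--Lyons/Hjorth theorem: because $\cS\subseteq\cR_\Ga$ is a non-hyperfinite treeable subequivalence relation of an essentially free action coming from a factor of a Bernoulli shift, there exists a nonabelian free group $F$ and a free, ergodic, measure-preserving action $F\cc(X,\mu)$ whose orbit equivalence relation is contained in $\cS$. Essential freeness of $\Ga\cc(X,\mu)$ then yields a measurable cocycle $\alpha:F\times X\to \Ga$ with $fx=\alpha(f,x)\cdot x$ for $\mu$-a.e.\ $x$.

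Next, I would use $\alpha$ to twist the product action. On $X\times K^\Ga$ define an $F$-action by $f\bullet(x,y)=(fx,\alpha(f,x)\cdot y)$, where $\Ga$ acts on $K^\Ga$ by the usual shift, and similarly on $X\times L^\Ga$. A standard co-induction lemma (as used in Gaboriau--Lyons, Section 2, and going back to Kechris' \emph{Global aspects of ergodic group actions}) identifies the twisted $F$-system $(X\times K^\Ga,\mu\times\kappa^\Ga,\bullet)$, relative to $(X,\mu)$, with an $F$-Bernoulli shift whose base is built from $(K,\kappa)$; the analogous identification holds on the $L$-side. Applying \cite{bowen-ornstein-2011} to the free group $F$, I obtain an $F$-equivariant factor map between the corresponding $F$-Bernoulli shifts, which through the identifications lifts to an $F$-equivariant factor map $\Phi:X\times K^\Ga\to X\times L^\Ga$ with respect to the twisted $F$-actions.

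The final step is to check that $\Phi$, built from the $F$-equivariant factor between Bernoulli shifts, is in fact $\Ga$-equivariant with respect to the original diagonal product actions. This is the crux of the co-induction formalism: the twisted $F$-action records precisely the restriction of the diagonal $\Ga$-action along the cocycle $\alpha$, so an $F$-equivariant map between co-induced Bernoulli shifts over $(X,\mu)$ that is compatible with the shift structure automatically intertwines the ambient $\Ga$-actions. Composing $\Phi$ with the projection $X\times L^\Ga\to L^\Ga$ produces the required $\Ga$-equivariant factor map onto $(L^\Ga,\lambda^\Ga)$.

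The main obstacle I expect is precisely this last compatibility point: ensuring that the $F$-equivariant map supplied by Bowen's theorem can be chosen so that, when transported through the co-induction isomorphism, it respects the $\Ga$-action and not merely the $F$-action. Making this precise requires carefully choosing fundamental domains and verifying the cocycle identities on the Bernoulli factor, which is the technical heart of the argument and closely follows the Gaboriau--Lyons template.
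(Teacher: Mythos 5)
Your overall route---Gaboriau--Lyons/Hjorth to produce a free $\F_2$-action inside the $\Ga$-orbits, the associated cocycle, and \cite{bowen-ornstein-2011} applied orbit-by-orbit---is the same as the paper's, but there are two concrete gaps. First, the version of the Gaboriau--Lyons/Hjorth theorem you invoke (\cite[Prop.~14]{gaboriau-lyons}) takes as input an \emph{ergodic} non-hyperfinite treeable subequivalence relation, whereas the hypothesis only supplies a non-hyperfinite treeable one. The paper does real work here: by \cite{MR2647134} there is a positive-measure set $Y$ on which $\cS$ restricts to an ergodic relation, and one extends $\cS\cap(Y\times Y)$ to an ergodic, treeable, non-hyperfinite subrelation of $\cR_\Ga$ defined on all of $X$ via a measurable retraction $\phi:X\to Y$ whose graph lies in $\cR_\Ga$. (After this step the $\F_2$-orbits are no longer contained in $\cS$-classes, only in $\Ga$-orbits, which is all that is needed; your assertion that the $F$-orbits lie in $\cS$-classes is not what the construction delivers.)

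Second, and more seriously, the assertion that an $F$-equivariant map between the twisted systems ``automatically intertwines the ambient $\Ga$-actions'' is not a theorem, and as a general principle it is false: $\Ga$-equivariance of a map into $L^\Ga$ is equivalent to the map being given by a single local rule, $\Psi(x,y)(\gamma)=\pi(\gamma^{-1}(x,y))$ for one fixed measurable $\pi:X\times K^\Ga\to L$, and an $F$-equivariant map obtained from an abstract isomorphism with a relative Bernoulli system over $(X,\mu)$ has no reason to be of this form (it need only respect one $F$-orbit at a time, not the simultaneous $\Ga$-translation of all $F$-orbits within a $\Ga$-orbit). What actually closes this gap---and what the paper writes down explicitly---is the formula $\Psi(x,y)(\gamma)=\Phi(F_{\gamma^{-1}x}(\gamma^{-1}y))(1_{\F_2})$: the Ornstein-type factor map $\Phi:K^{\F_2}\to L^{\F_2}$ is applied to the restriction of the coloring to the $\F_2$-orbit of the point being colored, with that orbit identified with $\F_2$ based at that point. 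This is manifestly of local-rule form, hence $\Ga$-equivariant, and the pushforward is $\lambda^\Ga$ because distinct $\F_2$-orbits carry independent iid colorings. You correctly flag this as the crux, but deferring it to ``the co-induction formalism'' leaves the actual content of the lemma unproved; the fix is to replace the abstract identification by the explicit orbitwise formula.
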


\begin{proof}
I claim that we can choose $\cS$ to be ergodic. Since $\cS$ is non-hyperfinite, $\Ga$ must be nonamenable. Then the main result of \cite{MR2647134} implies that there exists a measurable subset $Y \subset X$ with positive measure such that $\cS$ restricted to $Y$ is ergodic. Let $\phi:X \to Y$ be any measurable map such that (a) the graph of $\phi$ is contained in the orbit-equivalence relation and (b) $\phi$ restricted to $Y$ is the identity map. Now let $\cS'$ be the equivalence relation given by $(x,y) \in \cS'$ if and only if $(\phi x, \phi y) \in \cS$. This is a subequivalence relation of the orbit-equivalence relation; it is ergodic because any nonnull $\cS'$-invariant measurable subset necessarily contains $Y$ (since $\cS$ is ergodic and $\cS' \cap Y\times Y = \cS$) and therefore contains $X$ (up to measure zero). It is also treeable. Indeed if $\cG$ is a treeing of $\cS \cap Y \times Y$ then we define a treeing $\cG'$ of $\cS'$ by $\cG' = \cG \cup \{ (x,\phi(x)), (\phi(x),x):~ x\in X - Y\}$. So we can choose $\cS$ to be ergodic.

By \cite[Prop. 14]{gaboriau-lyons} the existence of an ergodic non-hyperfinite treeable subequivalence relation implies the existence of an essentially free ergodic pmp action $\F_2 \cc (X,\mu)$ of the rank 2 free group whose orbits are contained in $\Ga$-orbits (the main part of this argument is due to Hjorth \cite{hjorth-cost-attained}). Let $c:\F_2 \times X \to \Ga$ denote the cocycle
$$c(f,x)=g \Leftrightarrow fx =gx.$$
Also, for $x \in X$ and $y \in K^\Ga$ define $F_x(y) \in K^{\F_2}$ by
$$F_x(y)(f) = y(c(f^{-1},x)^{-1}).$$

By \cite{bowen-ornstein-2011} there exists a factor map $\Phi:(K,\kappa)^{\F_2} \to (L,\lambda)^{\F_2}$. So we define $\Psi:X \times K^\Ga \to L^\Ga$ by 
$$\Psi(x,y)(\gamma) = \Phi(F_{\gamma^{-1} x}(\gamma^{-1} y))(1_{\F_2}).$$
It is routine to check that this is the required factor. For the sake of clarity, here is an explanation without the algebra. An element $x\in X$ has the property that its $\Ga$-orbit is partitioned into $\F_2$-orbits. We consider an element $y \in K^\Ga$ as a coloring of $\Ga$ with colors in $K$. By identifying $\Ga$ with the orbit of $x$, we may also think of $y$ as a coloring of the orbit of $x$. This coloring does not change if we replace the pair $(x,y)$ with $(gx,gy)$ for $g\in \Ga$. By restriction, we can also view $y$ as a coloring of the $\F_2$-orbits that make up the $\Ga$-orbit of $x$. By identifying each $\F_2$-orbit with $\F_2$ itself we can view $y$ as a coloring of $\F_2$ (actually several copies of $\F_2$, one for each $\F_2$-orbit making up the $\Ga$-orbit). We can apply $\Phi$ to such a coloring to obtain a new coloring of (several copies of) $\F_2$ with values in $L$. By identifying each such copy of $\F_2$ with the $\F_2$-orbits in $\Gamma x$, we obtain again a coloring of the $\F_2$-orbits of $x$ contained in the $\Gamma$-orbit of $x$ and therefore, we obtain a coloring of $\Gamma$ by $L$. This is what the map $\Psi$ does.
\end{proof}

In order to use the lemma above to prove Proposition \ref{prop:small-ext}, we need to construct the factor $\Ga \cc (Y,\nu)$ in such a way that its orbit equivalence relation contains a non-hyperfinite treeable subequivalence relation. This will be accomplished through percolation theory for which we will need a bit of background. So let $G=(V,E)$ be a graph and $p \in [0,1]$ a parameter. The {\bf Bernoulli bond percolation} with parameter $p$ is the random subset $\omega_p \subset E$ defined by: if $e \in E$ is an edge then $e \in \omega_p$ with probability $p$. Moreover the events $\{e\in \omega_p:~e\in E\}$ are jointly independent. This is also called {\bf $p$-bond percolaton}. We consider $\omega_p$ to be a random subgraph of $G$. A {\bf cluster} is a connected component of $\omega_p$. The {\bf critical bond percolation} of $G$ is the number $p_c(G)$ equal to the infimum over all $p>0$ such that Bernoulli bond percolation with parameter $p$ has an infinite cluster almost surely. See \cite{blps-group-perc} for background.

\begin{lem}\label{lem:alpha}
Let $D>2$ be an integer. There exists $0<\alpha,\beta<1$ such that the following holds. Let $G$ be a tree such that every vertex in $G$ has degree at least 3 and at most $D$. Then $\alpha$-Bernoulli bond percolation on $G$ has an infinite cluster a.s. and every such cluster is a tree with infinitely many ends. Also, for any vertex $v$ of $G$, the probability (with respect to $\alpha$-bond percolation)  that $v$ is contained in a finite cluster is at least $\beta$.
\end{lem}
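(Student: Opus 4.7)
The plan is to set $\alpha$ to be any number strictly between $1/2$ and $1$ (for concreteness take $\alpha := 3/4$) and $\beta := (1-\alpha)^D$. With these choices the claim that every cluster is a tree is immediate, since any connected subgraph of the tree $G$ is itself a tree; and the claim about finite clusters follows because with probability at least $(1-\alpha)^{\deg_G(v)} \geq (1-\alpha)^D = \beta$ every edge of $G$ incident to $v$ is closed in $\omega_\alpha$, in which case the cluster of $v$ equals $\{v\}$.

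The almost-sure existence of an infinite cluster would follow from a branching-process comparison. Root $G$ at an arbitrary vertex $v$; because every vertex of $G$ has degree at least $3$, every non-root vertex has at least two children in the rooted tree. Breadth-first exploration of the cluster of $v$ in $\omega_\alpha$ therefore produces a random rooted subtree whose offspring distribution at every vertex stochastically dominates $\mathrm{Binomial}(2,\alpha)$, of mean $2\alpha > 1$. Standard Galton--Watson theory then yields that the cluster of $v$ is infinite with probability at least $\theta^* > 0$, where $\theta^*$ is the survival probability of the dominating process and depends only on $\alpha$. Since the event ``some infinite cluster exists'' is invariant under toggling any finite set of edges, it is tail-measurable, so by Kolmogorov's zero-one law its probability is $1$.

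The main step, and where I expect to spend the real effort, is showing that every infinite cluster has infinitely many ends almost surely. My plan is to analyze the \emph{backbone} $B$ of the cluster of $v$ -- the set of cluster vertices whose descendant subtree within the cluster is infinite. Every vertex $u \in B$ has at least two children in the rooted tree $G$, and each child is independently a backbone child of $u$ with probability $p_u := \alpha\,\theta_u$, where $\theta_u$ denotes the probability that the cluster of that child within its own subtree is infinite. Applying the branching-process comparison of the previous paragraph to each subtree gives the uniform lower bound $\theta_u \geq \theta^* > 0$. Conditioning on the event (forced in $B$) that $u$ has at least one backbone child then yields an inhomogeneous Galton--Watson-type process whose offspring is always $\geq 1$ and is $\geq 2$ with uniformly positive probability $q > 0$. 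A conditional Borel--Cantelli argument then shows that the backbone generation sizes $Z_n := |\{u \in B : d_G(v,u) = n\}|$ tend to $\infty$ almost surely; since distinct vertices at the same depth in $B$ lie on distinct infinite rays emanating from $v$, this forces the cluster to have infinitely many ends. A countable union over $v$ gives the conclusion simultaneously for every infinite cluster.

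The principal obstacle is the inhomogeneity of $G$: since $G$ is not assumed regular, the branching processes that arise vary from vertex to vertex, and some care is required to ensure that the estimates $\theta_u \geq \theta^*$, $p_u \geq \alpha\theta^*$, and the lower bound on $q$ are all genuinely uniform in $u$ and in $G$. Fortunately each such estimate depends only on the bounds $3 \leq \deg_G(u) \leq D$ together with the choice $\alpha > 1/2$, so the resulting $\alpha$ and $\beta$ work for every tree satisfying the hypotheses.
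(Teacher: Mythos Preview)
Your proposal is correct and, for two of the three claims, coincides with the paper's argument: the paper also takes $\beta=(1-\alpha)^D$ (noting that the cluster $\{v\}$ alone already gives this bound), and its choice $\alpha=(p_c(T_3)+1)/2$ is exactly $3/4$ since $p_c(T_3)=1/2$, the same threshold your Binomial$(2,\alpha)$ comparison identifies. Your zero--one argument for the existence of an infinite cluster is a direct unpacking of the paper's one-line observation that $G$ contains a copy of $T_3$ and hence $p_c(G)\le p_c(T_3)$.

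The genuine divergence is in the ``infinitely many ends'' step. The paper simply cites H\"aggstr\"om--Peres (and Lyons--Schramm for a shorter proof) and moves on. You instead give a self-contained backbone argument: each backbone vertex has at least two children in $G$, each of which is independently a backbone child with probability at least $\alpha\theta^*$, so conditionally on having at least one backbone child there are at least two with uniformly positive probability; iterating forces the backbone generation sizes to diverge. This works, and in fact is arguably better suited to the lemma as stated, since $G$ is an \emph{arbitrary} tree with degrees in $[3,D]$ rather than a transitive or unimodular graph, which is the natural setting of the cited references. The trade-off is length: the paper's proof is three lines where yours is a page, and the conditional-independence bookkeeping (that conditioning on the backbone up to depth $n$ leaves the subtrees below the depth-$n$ backbone vertices independent and each merely conditioned on survival) should be made explicit, since ``backbone child'' is a future-measurable event. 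Your identification of the inhomogeneity as the main obstacle is apt, and your resolution via the uniform lower bound $\theta_u\ge\theta^*$ is the right one.
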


\begin{proof}
Note that $G$ contains a copy of the 3-regular tree $T_3$. Therefore $p_c(G) \le p_c(T_3)$. It is well-known that $p_c(T_3)<1$. This follows, for example, from the more general statement that $p_c(H)<1$ whenever $H$ is the Cayley graph of a nonamenable group \cite{blps-nonamenable} (observe that $T_3$ is the Cayley graph of $\Z/2\Z * \Z/2\Z * \Z/2\Z$). So let $\alpha=(p_c(T_3)+1)/2$. Let $\omega \subset E(G)$ denote $\alpha$-bond percolation on $G$. Since $G$ is a tree, $\omega$ is a forest a.s. By \cite{haggstrom-peres-1999}, each infinite cluster of $\omega$ has infinitely many ends a.s. (for a simpler proof, see \cite{lyons-schramm-indistinguishability}).

The probability that a vertex $v$  is contained in a finite cluster of $\omega$ is at least the probability that $v$ is itself a cluster. The latter probability is $(1-\alpha)^{\textrm{deg}(v)} \ge (1-\alpha)^{D} =:\beta$.  
\end{proof}

\begin{proof}[Proof of Proposition \ref{prop:small-ext}]
Let $\Ga_0\le \Ga$ be a finitely generated nonamenable subgroup. By \cite{pak-nagnibeda}, there exists a finite generating set $S \subset \Ga_0$ such that bond-percolation on the Cayley graph $\Cay(\Ga_0,S)$ has a nontrivial uniqueness phase. In other words, there exists $p \in (0,1)$ such that $p$-bond-percolation on $\Cay(\Ga_0,S)$ has infinitely many infinite clusters. It follows by inclusion that $p$-bond-percolation on $\Cay(\Ga,S)$ also has infinitely many infinite clusters. Here $\Cay(\Ga,S)$ is the graph with vertex set $\Ga$ and edges of the form $(g,gs)$ for $g\in \Ga, s\in S$. This need not be a connected graph since $S$ need not generate $\Ga$.  

Let $\omega_0 \subset E(\Cay(\Ga,S))$ denote the set of edges of $p$-bond-percolation on $\Cay(\Ga,S)$. By \cite{haggstrom-peres-1999}, each infinite cluster of $\omega_0$ has infinitely many ends a.s. (for a simpler proof, see \cite{lyons-schramm-indistinguishability}). For $x\in \Ga$, let $K_0(x)$ denote the cluster of $\omega_0$ containing $x$. 

By \cite[Lemma 7.4]{blps-group-perc}, there exists a percolation $\omega_1 \subset \omega_0$ such that conditioned on the cluster $K_0(x)$ being infinite, the cluster $K_1(x)$ of $\omega_1$ containing $x$ is a tree with infinitely many ends (almost surely). Moreover the proof shows that we can choose $\omega_1$ to be the minimal spanning forest associated with an iid process. In particular, we can choose $\omega_1$ so that its law is a factor of a Bernoulli process. After removing some edges if necessary, we may also assume that every finite cluster of $\omega_1$ consists of a single vertex.

Let $\alpha,\beta$ be as in Lemma \ref{lem:alpha}. 

{\bf Claim}. There exist random subgraphs $\omega_1 \supset \omega_2 \supset \cdots$ satisfying:
\begin{itemize}
\item each infinite cluster of $\omega_i$ is a tree with infinitely many ends (a.s.),
\item every finite cluster of $\omega_i$ is a single vertex,
\item the probability that $1_\Ga$ is contained in an infinite cluster of $\omega_{i+1}$ is at most $\beta$ times the probability that $1_\Ga$ is contained in an infinite cluster of $\omega_i$. 
\item each $\omega_{i}$ is a factor of a Bernoulli shift.
\end{itemize}

\begin{proof}
For induction, we assume $\omega_1,\ldots, \omega_n$ has been constructed.

We cannot directly apply Lemma \ref{lem:alpha} because some vertex might have degree $<3$ in $\omega_n$. After repeatedly removing all edges incident to a degree 1 vertex if necessary, we may assume that no vertex of $\omega_n$ has degree 1. Next define $\omega'_n$ as follows: the vertices of $\omega'_n$ are the vertices of $\omega_n$ that have degree at least 3. There is an edge in $\omega'_n$ from $v$ to $w$ if there is a path in $\omega_n$ from $v$ to $w$ such that all of the intermediate vertices have degree 2. 

Let $\omega'_{n+1} \subset \omega'_n$ be the random subgraph obtained from  Bernoulli $\alpha$-bond-percolation on $\omega'_n$. By Lemma \ref{lem:alpha}, $\omega'_{n+1}$ contains infinite clusters a.s. Moreover each infinite cluster is a tree with infinitely many ends (since each infinite cluster of $\omega'_n$ is a tree with infinitely many ends). We let $\omega''_{n+1}$ be the subgraph of $\omega_n$ that is induced from $\omega'_{n+1}$. More precisely, recall that every edge $e$ of $\omega'_{n+1}$ corresponds to a path $e_1,e_2,\ldots,e_k$ of edges in $\omega_n$ such that each intermediate vertex has degree 2. We let $\omega''_{n+1}$ be the subgraph containing all such edges $e_1,\ldots, e_k$. Finally we let $\omega_{n+1}$ be the subgraph obtained from $\omega''_{n+1}$ by removing all edges that are contained in finite clusters. The properties in the claim are easily verified for $\omega_{n+1}$. This completes the induction.
\end{proof}

Let $\delta>0$ be such that
$$\delta |S|\log (2) - \delta\log(\delta) - (1-\delta)\log(1-\delta)<\epsilon/2.$$
It follows from the claim above that there exists a random subgraph $\omega_n$ of $\Cay(\Ga,S)$ (for some $n$) such that:
\begin{itemize}
\item the probability that $\omega_n$ does not contain any edges incident to $1_\Ga$  is at least $1-\delta$,
\item the law of $\omega_n$ is a factor of a Bernoulli shift,
\item with probability one, some cluster of $\omega_n$ is a tree with infinitely many ends.
\end{itemize}

Let $X$ be the space of all subgraphs of $\Cay(\Ga,S)$ and $\mu$ the law of $\omega_n$. For $x\in X$, let $\phi(x) =\{s\in S:~(1_\Ga,s)\in \omega_n\}$. Let $\cP$ be the partition of $X$ induced by $\phi$: this means that $x,y \in X$ are in the same part of $\cP$ if and only if $\phi(x)=\phi(y)$. The Shannon entropy of $\cP$ satisfies the bound:
$$H_\mu(\cP)\le \delta |S|\log (2) - \delta\log(\delta) -  (1-\delta)\log(1-\delta)<\epsilon/2$$
(because there are $2^{|S|}$ subsets of $S$ and the probability that $\phi(x)$ is empty (when $x\in X$ is random with law $\mu$)  is at least $1-\delta$). The partition $\cP$ is generating for $\Ga \cc (X,\mu)$. Therefore
$$h^{Rok}_\Ga(X,\mu)<\epsilon/2.$$
Because each $\omega_n$ contains an infinite tree with infinitely many ends, the orbit-equivalence relation of $\Ga \cc (X,\mu)$ contains a non-hyperfinite treeable subequivalence relation. To see this, let $Y \subset X$ be the set of all $\omega \in X$ such that $1_\Ga$ is in an infinite cluster of $\omega$. Let $\cF \subset Y \times Y$ be the Borel equivalence relation on $Y$ given by $(g\omega, \omega) \in \cF$ if and only if $g^{-1}$ and $1_\Ga$ are in the same infinite cluster of $\omega$. This is a non-hyperfinite treeable equivalence relation since its equivalence classes are in 1-1 bijection with the infinite clusters of $\omega$.  Let $\Phi:X \to Y$ be any Borel map with graph contained in the orbit-equivalence relation of $\Ga$ such that $\Phi$ restricted to $Y$ is the identity map. Finally let $\tcF \subset X \times X$ be the equivalence relation $(x,y) \in \tcF$ if and only if $(\Phi x, \Phi y) \in \cF$. Then $\tcF$ is the required non-hyperfinite treeable subequivalence relation. In fact, if $\cG \subset Y \times Y$ is a treeing of $\cF$ then $\tcG:=\cG \cup \{(x,\Phi(x)):~x\in X\}$ is a treeing of $\tcF$.

If $\Ga \cc (X,\mu)$ is not essentially free then let $(L,\lambda)$ be a nontrivial probability space with Shannon entropy small enough so that the Rokhlin entropy of the direct product $\Ga \cc (X\times L^\Ga, \mu \times \lambda^\Ga)$ is $< \epsilon/2$. Because $\Ga \cc (X,\mu)$ is a factor of a Bernoulli shift, this direct product is also a factor of a Bernoulli shift. Moreover, it is essentially free. Also its orbit-equivalence relation contains a non-hyperfinite treeable subequivalence relation (this can be obtained by pulling back a non-hyperfinite treeable subequivalence relation of $\Ga \cc (X,\mu)$ by way of the projection map). So without loss of generality, we may assume $\Ga \cc (X,\mu)$ is essentially free.

Let $(K,\kappa)$ be any nontrivial probability space with Shannon entropy $<\epsilon/2$. Lemma \ref{lem:GL} now implies that the product action
$$\Ga \cc (X\times K^\Ga, \mu \times \kappa^\Ga)$$
satisfies the statement of the Theorem. 

\end{proof}

\begin{cor}\label{cor:inverse limit}
Let $\Ga$ be any countable nonamenable group. There exists a pmp action $\Ga \cc (Z,\zeta)$ satisfying:
\begin{itemize}
\item $\Ga \cc (Z,\zeta)$  is an inverse limit of factors of Bernoulli shifts,
\item $h^{Rok}_\Ga(Z,\zeta)=0$
\item $\Ga \cc (Z,\zeta)$ factors onto all Bernoulli shifts over $\Ga$.
\end{itemize}
\end{cor}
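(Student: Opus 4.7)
For each $n \geq 1$, apply Proposition \ref{prop:small-ext} with $\epsilon = 2^{-n}$ to obtain an ergodic pmp action $\Ga \cc (Y_n, \nu_n)$ that is a factor of a Bernoulli shift $B_n$, has $h^{Rok}_\Ga(Y_n, \nu_n) < 2^{-n}$, and factors onto every Bernoulli shift over $\Ga$. Since $Y_{n+1}$ factors onto $B_n$ and $B_n$ factors onto $Y_n$, the action $Y_{n+1}$ factors onto $Y_n$; fix a factor map $\pi_n : Y_{n+1} \to Y_n$. Set $(Z, \zeta) := \varprojlim_n (Y_n, \nu_n)$ with respect to the $\pi_n$. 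By construction $(Z,\zeta)$ is an inverse limit of factors of Bernoulli shifts, and composing $Z \to Y_1$ with any factor map from $Y_1$ onto a Bernoulli shift $B$ exhibits $Z$ factoring onto every Bernoulli shift.

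Identifying each $\cB_{Y_n}$ with its pullback in $\cB_Z$, the $\cB_{Y_n}$ form an increasing sequence with $\bigvee_n \cB_{Y_n} = \cB_Z$. Ergodicity of $Z$ follows from a martingale argument: for $\Ga$-invariant $A \in \cB_Z$, the conditional expectation $\mathbb{E}_\zeta[\chi_A \mid \cB_{Y_n}]$ is $\Ga$-invariant and $\cB_{Y_n}$-measurable, hence a.e.\ constant by ergodicity of $Y_n$, and converges to $\chi_A$ in $L^1$ by the increasing martingale theorem. Pick an increasing sequence $\cP_1 \leq \cP_2 \leq \cdots$ of finite partitions of $Z$ with $\cP_n$ being $\cB_{Y_n}$-measurable and $\bigvee_n \cP_n = \cB_Z$. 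By Lemma \ref{lem:outer2} it suffices to show $h^{Rok}_{\Ga, \zeta}(\cP_n) = 0$ for each $n$. Given $\epsilon > 0$, choose $m \geq n$ with $2^{-m} < \epsilon/2$ and let $\cS_m$ be a generating partition of $Y_m$ with $H_{\nu_m}(\cS_m) < \epsilon$. Its pullback $\cR$ to $Z$ satisfies $H_\zeta(\cR) < \epsilon$, and by $\Ga$-equivariance of $Z \to Y_m$ one has $\sigma\textrm{-alg}_\Ga(\cR) = \cB_{Y_m} \supset \cB_{Y_n} \supset \cP_n$. Hence $h^{Rok}_{\Ga, \zeta}(\cP_n) < \epsilon$, and letting $\epsilon \searrow 0$ finishes the argument.

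The hard part is establishing that $Z$ has zero Rokhlin entropy even though each floor $Y_n$ has strictly positive Rokhlin entropy. The maneuver rests crucially on the nonamenable phenomenon emphasized in the introduction: because Rokhlin entropy can strictly increase under factor maps, the deeper floors $Y_m$ — despite mapping onto all shallower $Y_n$ — may have arbitrarily small Rokhlin entropy. Lemma \ref{lem:outer2} then lets us convert this shrinking of entropy up the tower into a certificate of zero Rokhlin entropy for the inverse limit.
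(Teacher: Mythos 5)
Your proposal is correct and follows essentially the same route as the paper: the same tower $Y_{n+1}\to B_n\to Y_n$ built from Proposition \ref{prop:small-ext}, the same inverse limit, and the same observation that deep floors have tiny Rokhlin entropy yet generate everything above them. The only (cosmetic) difference is in certifying $h^{Rok}_\Ga(Z,\zeta)=0$: the paper either cites Seward or assembles one generating partition of total entropy $<\epsilon$ by summing pullbacks over a sparse subsequence of floors, whereas you route through Lemma \ref{lem:outer2} and kill the outer Rokhlin entropy of each $\cP_n$ with a single deep floor — both are valid, and your explicit ergodicity check for $Z$ (needed to apply Lemma \ref{lem:outer2}) is a welcome addition the paper leaves implicit.
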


\begin{proof}
By Proposition \ref{prop:small-ext} there exists a sequence $\Ga \cc (Y_i,\nu_i)$ ($i\in \N$) of pmp actions satisfying
\begin{itemize}
\item each $\Ga \cc (Y_i,\nu_i)$  is a factor of a Bernoulli shift,
\item $h^{Rok}_\Ga(Y_i,\nu_i)<2^{-i}$,
\item each $\Ga \cc (Y_i,\nu_i)$ factors onto all Bernoulli shifts over $\Ga$.
\end{itemize}
It follows that there exist factor maps $\Phi_i:Y_i \to Y_{i-1}$ for $i\ge 2$. Let $\Ga \cc (Z,\zeta)$ denote the inverse limit of this system. It suffices to show $h^{Rok}_\Ga(Z,\zeta)=0$. This follows from \cite[Corollary 4.9]{seward-kreiger-2}. Alternatively, it can be proven directly as follows. Let $\epsilon>0$. Then there exists an infinite subsequence $\{n_i\}_{i=1}^\infty$ such that 
$$\sum_i h^{Rok}_\Ga(Y_{n_i},\nu_{n_i}) < \epsilon/2.$$
Let $\cP_i$ be a generating partition of $Y_{n_i}$ with $H_\mu(\cP_i) < h^{Rok}_\Ga(Y_{n_i},\nu_{n_i}) + \epsilon 2^{-i-1}$. By pulling back, we may consider $\cP_i$ to be a partition of $Z$. Then $\bigvee_i \cP_i$ is a generating partition for $\Ga \cc (Z,\zeta)$ and 
$$H_\mu\left(\bigvee_i \cP_i \right) \le \sum_i H_\mu(\cP_i) \le \sum_i h^{Rok}_\Ga(Y_{n_i},\nu_{n_i}) + \epsilon 2^{-i-1} < \epsilon.$$
Because $\epsilon>0$ is arbitrary, this proves $h^{Rok}_\Ga(Z,\zeta)=0$.

\end{proof}

\section{Zero entropy extensions}\label{sec:extension} 

\begin{thm}\label{thm:zero-extension}
Let $\Ga$ be a nonamenable countable group and $\Ga \cc (X,\mu)$ a free ergodic action. Then there exists a free ergodic action $\Ga \cc (\tX,\tmu)$ that factors onto $\Ga \cc (X,\mu)$ and has zero Rokhlin entropy.
\end{thm}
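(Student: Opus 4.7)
The plan is to construct $\tX$ as a relatively independent joining of $(X,\mu)$ with the zero-entropy action $(Z,\zeta)$ from Corollary \ref{cor:inverse limit}, taken over a common Bernoulli factor.

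First I would dispose of the degenerate case $h^{Rok}_\Ga(X,\mu)=0$, in which $\tX=X$ works. Assume from now on that $h^{Rok}_\Ga(X,\mu)>0$. Seward's generalization of Sinai's factor theorem provides a Bernoulli factor $(B,\beta)$ of $\Ga \cc (X,\mu)$ such that the relative Rokhlin entropy $h^{Rok}_\Ga(X,\mu\mid \cF_B)=0$, where $\cF_B \subset \cB_X$ is the pulled-back sub-sigma-algebra. Corollary \ref{cor:inverse limit} in turn yields a pmp action $\Ga \cc (Z,\zeta)$ of zero Rokhlin entropy that factors onto $(B,\beta)$ via some map $\rho:Z\to B$.

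Next I would form the relatively independent joining $\tmu=\int_B \mu_b \otimes \zeta_b\,d\beta(b)$ on $\tX=X\times Z$, where $\{\mu_b\}$ and $\{\zeta_b\}$ are the disintegrations of $\mu$ and $\zeta$ over $B$. This measure is $\Ga$-invariant; the first-coordinate projection is a factor map onto $(X,\mu)$, from which essential freeness of $\Ga \cc (\tX,\tmu)$ is inherited. For the entropy bound, the structural point is that under this joining, the $Z$-coordinate determines the $B$-coordinate and leaves the $X$-coordinate conditionally distributed as $\mu_{\rho(z)}$; consequently $h^{Rok}_\Ga(\tX,\tmu\mid \cF_Z)=h^{Rok}_\Ga(X,\mu\mid \cF_B)=0$, where $\cF_Z$ denotes the sub-sigma-algebra pulled back from $Z$. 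Combining this with the subadditivity of outer Rokhlin entropy from Corollary 2.6 of \cite{seward-kreiger-2} (as in Lemma \ref{lem:estimate}) gives
\[
h^{Rok}_\Ga(\tX,\tmu) \le h^{Rok}_\Ga(Z,\zeta) + h^{Rok}_\Ga(\tX,\tmu\mid \cF_Z) = 0.
\]

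The remaining issue is that $\tmu$ need not be ergodic. To deal with this I would take its ergodic decomposition $\tmu=\int \tmu_\omega\,d\nu(\omega)$. Because $\mu$ is ergodic and equals the pushforward of $\tmu$, the uniqueness of ergodic decomposition forces the pushforward of almost every $\tmu_\omega$ to be $\mu$; and because any generating partition for $(\tX,\tmu)$ is also generating for each ergodic component, a partition realizing an arbitrarily small entropy yields $\int h^{Rok}_\Ga(\tX,\tmu_\omega)\,d\nu(\omega)=0$, so almost every $\tmu_\omega$ has zero Rokhlin entropy. Any such component supplies the desired extension. The step I expect to require the most care is invoking Seward's Sinai theorem in precisely the form that yields a Bernoulli factor with vanishing relative Rokhlin entropy; a close second is establishing the identity $h^{Rok}_\Ga(\tX,\tmu\mid \cF_Z)=h^{Rok}_\Ga(X,\mu\mid \cF_B)$, which must be extracted carefully from the relative-independence structure and the definition of Rokhlin entropy.
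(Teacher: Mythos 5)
Your proposal is correct and follows essentially the same route as the paper: Seward's Sinai-type theorem to produce a Bernoulli factor $(B,\beta)$ with zero relative Rokhlin entropy, the zero-entropy inverse limit of Corollary \ref{cor:inverse limit} mapped onto it, the relatively independent joining over $(B,\beta)$, and the subadditivity bound from Corollary 2.6 of \cite{seward-kreiger-2}; your choice to condition on $\cF_Z$ rather than on $\cB_B$ (where the paper instead adjoins a small generating partition of $Z$) is an immaterial variant, since either way the relative entropy term vanishes. The one place you go beyond the paper's written proof is the ergodic-decomposition step at the end -- the paper does not explicitly verify ergodicity of $\tmu$ -- and your argument there (almost every component pushes forward to $\mu$ by ergodicity of $\mu$, and almost every component has zero Rokhlin entropy because the Rokhlin entropy of a nonergodic action controls that of its ergodic components) is a correct and worthwhile addition.
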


\begin{remark}
 Seward \cite{seward-small-action} proved, under the same hypotheses as Theorem \ref{thm:zero-extension}, the existence of an extension $\Ga \cc (\tX,\tmu)$ of $\Ga \cc (X,\mu)$ such that $\Ga \cc (\tX,\tmu)$ admits a generating partition with at most $n$ parts where $n=n(\Ga)$ depends only on $\Ga$. By Seward's generalization of Krieger's Generator Theorem \cite{seward-kreiger-1}, Theorem \ref{thm:zero-extension} implies that we can take $n=2$.
\end{remark}


We will need Seward's generalization of Sinai's Factor Theorem  \cite{seward-sinai}:

\begin{thm}[Seward \cite{seward-sinai}]\label{thm:seward-sinai}
For any countable group $\Ga$ and any ergodic essentially free action $\Ga \cc (X,\mu)$  with positive Rokhlin entropy there exists a Bernoulli factor such that the Rokhlin entropy of $\Ga \cc (X,\mu)$ relative to this Bernoulli factor is zero.
\end{thm}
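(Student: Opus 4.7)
The plan is to construct, by iteration, an increasing sequence of Bernoulli factors whose join carries all of the Rokhlin entropy, so that the action becomes zero-entropy over the limit. The underlying idea is the one behind Sinai's original theorem: find ``enough independence'' inside an action of positive entropy to produce a Bernoulli factor, then repeat and pass to a limit. The two features that make the present statement harder than the classical $\Z$-case are (i) Rokhlin entropy must play the role that Kolmogorov--Sinai entropy played for Ornstein, and (ii) one has to work over an arbitrary countable group, so the amenable Rokhlin lemma is not available.

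First I would fix a generating countable partition $\cP$ of $(X,\mu)$ with $H_\mu(\cP)$ finite (which exists, for example, by Seward's generator theorem, since $h^{Rok}_\Ga(X,\mu)<\infty$ is not needed and infinite Shannon entropy can be absorbed by refining $\cP$). This concretely realises the action inside a symbolic shift and lets one speak about finite windows, empirical distributions, and approximate independence. The next step is the main content: starting from positive Rokhlin entropy, extract a single nontrivial Bernoulli factor $\Ga\cc(B_1,\b_1)$. The natural route is to use Seward's Rokhlin-type tiling lemmas for general countable groups to tile $X$ (up to small error) by translates of a F\o lner-like base set $T$; on the base set one has access to an almost free, nearly uniform parameter on a large finite block, and one encodes this randomness through a measurable coding to produce an i.i.d.\ $\Ga$-labelling. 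Verifying that the resulting factor really is Bernoulli (and not just a measure that looks product-like on one scale) is where the bulk of technical work sits; one would follow the usual strategy of checking a ``very weakly Bernoulli''-type criterion suited to Rokhlin entropy, or alternatively realise the factor as a measurable image of a product space by explicit construction on the tiles.

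Having produced $B_1$, the inductive step is to replace the ambient action $\Ga\cc(X,\mu)$ by the \emph{relative} object $\Ga\cc(X,\mu)$ over $B_1$: if $h^{Rok}_\Ga(X,\mu\,|\,B_1)>0$, apply the same extraction to find a further Bernoulli factor $B_2$, relatively independent of $B_1$, so that $B_1\vee B_2$ is still Bernoulli (a product of Bernoulli shifts of $\Ga$ is Bernoulli, for any countable $\Ga$). At each stage one chooses $B_{n+1}$ large enough so that for a fixed countable dense family of partitions $\{\cR_k\}$ of $X$ (say the partitions coming from a refining sequence $\cP_n\nearrow \cB_X$), the conditional Shannon entropies $H_\mu(\cR_k\,|\,\sigma\textrm{-alg}_\Ga(B_1\vee\cdots\vee B_{n+1}))$ are forced below $\vre_n\to 0$. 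Then the limit $B_\infty:=\bigvee_n B_n$ is a Bernoulli factor, and by Lemma~\ref{lem:outer1} applied relatively, $h^{Rok}_\Ga(X,\mu\,|\,B_\infty)=0$.

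The main obstacle is the first Bernoulli extraction. For $\Ga=\Z$ this is the Sinai factor theorem itself, proved via Ornstein's finitary coding and the very weakly Bernoulli property; for amenable $\Ga$ there is an Ornstein--Weiss analogue; for general countable $\Ga$ one lacks Ornstein theory entirely and must build a Bernoulli factor directly from the combinatorial data of a generating partition, controlling entropy losses on the overhead between tiles and on the boundary of the coding window. This is precisely the difficulty that Seward resolves in \cite{seward-sinai}, and any honest plan has to invoke either his tiling/coding machinery or an equivalent device; once the single-step extraction is in hand, the iteration and limit arguments above are essentially soft.
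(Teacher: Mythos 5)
This statement is not proved in the paper at all: it is imported verbatim from Seward's work and used as a black box (note the attribution ``[Seward \cite{seward-sinai}]'' in the theorem header). So the only meaningful question is whether your proposal constitutes an independent proof, and it does not. The decisive step --- extracting a single nontrivial Bernoulli factor from an arbitrary ergodic essentially free action of an arbitrary countable group with positive Rokhlin entropy --- is exactly the content of the theorem, and your sketch both (a) routes it through a ``F\o lner-like'' tiling of $X$, which is unavailable in this generality (for nonamenable $\Ga$ there are no F\o lner sets, and for non-sofic $\Ga$ there is no quasi-tiling substitute in the style of Ornstein--Weiss), and (b) concedes in its final paragraph that this step must be supplied by ``Seward's tiling/coding machinery'' from \cite{seward-sinai} itself. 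That makes the argument circular as a proof of the theorem: the only hard ingredient is the theorem.

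Two smaller points. First, the opening reduction to a generating partition of finite Shannon entropy is not justified: such a partition exists if and only if $h^{Rok}_\Ga(X,\mu)<\infty$, and the theorem makes no such hypothesis; ``infinite Shannon entropy can be absorbed by refining $\cP$'' is backwards, since refining a partition only increases its entropy. Second, your iteration-and-limit scheme (extract $B_1$, then a relatively independent Bernoulli $B_2$, set $B_\infty=\bigvee_n B_n$, and drive the conditional entropies of a dense family of partitions to zero so that the relative Rokhlin entropy over $B_\infty$ vanishes) is a reasonable soft outline for bootstrapping a one-step relative Sinai extraction into the ``zero relative entropy over a Bernoulli factor'' statement quoted here. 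But since the paper offers no proof to compare against --- it cites \cite{seward-sinai} --- and your proposal ultimately defers to the same source for its only substantive step, the correct course is simply to quote the theorem with attribution, as the paper does.
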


\begin{proof}[Proof of Theorem \ref{thm:zero-extension}]
Without loss of generality, we may assume $\Ga \cc (X,\mu)$ has positive Rokhlin entropy. By Theorem  \ref{thm:seward-sinai}, there exists a Bernoulli factor $\Ga \cc (B,\beta)$ of $\Ga\cc (X,\mu)$ such that 
$$h^{Rok}_\Ga(X,\mu | \cB_{B}) = 0$$
where $\cB_{B}$ denotes the sigma-algebra associated with $B$. Let $\Ga \cc (Z,\zeta)$ be as in Corollary \ref{cor:inverse limit}. Fix a factor map of  $\Ga \cc (Z,\zeta)$ onto $\Ga \cc (B,\beta)$. Let $\Ga \cc (\tX,\tmu)$ be the independent joining of $\Ga \cc (Z,\zeta)$ and $\Ga \cc (X,\mu)$ over $\Ga \cc (B,\beta)$. 

It suffices to show $h^{Rok}_\Ga(\tX,\tmu)=0$. By \cite[Corollary 2.6]{seward-kreiger-2},
$$h^{Rok}_\Ga(\tX,\tmu) \le h^{Rok}_\Ga(\tX,\tmu|\cB_{B}) + h^{Rok}_{\Ga,\tmu}(\cB_B).$$
Because outer Rokhlin entropy is upper-bounded by the Rokhlin entropy of any intermediate factor,
$$h^{Rok}_{\Ga,\tmu}(\cB_B) \le h^{Rok}_\Ga(Z,\zeta) = 0.$$
So it suffices to prove  $h^{Rok}_\Ga(\tX,\tmu|\cB_{B})=0$. 

Let $\epsilon>0$, $\alpha$ be a generating partition of $Z$ with $H_\zeta(\alpha)<\epsilon$ and let $\beta$ be a partition of $X$ with $H_\mu(\beta|\cB_B)<\epsilon$ such that $\sigma\textrm{-alg}_\Ga(\beta \cup \cB_{B}) = \cB_X$ (up to measure zero). By pulling back, we may consider $\alpha$ and $\beta$ as partitions on $\tX$. Clearly, $\alpha \vee \beta$ is generating for the action $\Ga \cc (\tX,\tmu)$ and $H_{\tmu}(\alpha\vee \beta|\cB_B)<2\epsilon$. Since $\epsilon>0$ is arbitrary, this implies the claim.

\end{proof}

\section{Zero entropy is generic}\label{sec:zero} 

In this section, the proof of Theorem \ref{thm:zero} is completed. Most of our results so far hold only for essentially free ergodic actions. In order to generalize them, first we show that essentially free actions are generic. The next lemma will be helpful twice.

\begin{lem}\label{lem:2}
Let $\bfa=\Ga \cc (X,\mu)$ be a pmp action and $\Phi:X \to \Ca^\Ga$ a $\Ga$-equivariant measurable map. Then there exists a sequence of measures $\mu_i \in \Prob_\Ga(\Ca^\Ga)$ such that
\begin{itemize}
\item $\Ga \cc (\Ca^\Ga,\mu_i)$ is measurably-conjugate to $\bfa$ for all $i$;
\item $\mu_i \to \Phi_*\mu$ in the weak* topology as $i\to\infty$.
\end{itemize}
\end{lem}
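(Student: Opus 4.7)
The plan is to realize the $\mu_i$ as pushforwards $\mu_i = (\Phi_i)_*\mu$ under an explicitly constructed sequence of $\Ga$-equivariant Borel \emph{injections} $\Phi_i : X \to \Ca^\Ga$ that converge to $\Phi$ uniformly in the product metric. Given such a sequence, equivariance and injectivity combined with the Lusin--Souslin theorem on standard Borel spaces will imply that each $\Phi_i$ is a Borel isomorphism from $(X,\mu)$ onto a conull subset of $(\Ca^\Ga,\mu_i)$, hence a measure-conjugacy between $\bfa$ and $\Ga \cc (\Ca^\Ga,\mu_i)$. Uniform convergence $\Phi_i \to \Phi$ will then immediately give $\mu_i \to \Phi_*\mu$ in weak*, because any continuous $f:\Ca^\Ga \to \R$ is uniformly continuous on the compact space $\Ca^\Ga$, so $f\circ \Phi_i \to f\circ \Phi$ uniformly on $X$, and integration against $\mu$ passes to the limit.

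The construction uses a fixed auxiliary $\Ga$-equivariant Borel injection $\Psi : X \to \Ca^\Ga$, which exists because $(X,\mu)$ is standard Borel: choose a countable separating family $\{f_n\}$ of Borel functions $X \to \Ca$ and let $\Psi(x)(g)$ encode the sequence $(f_n(g^{-1}x))_{n\in\N}$ via a fixed homeomorphism $\Ca^\N \cong \Ca$. Equivariance is built in and injectivity is immediate from the separating property.

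Now identify $\Ca$ with $\{0,1\}^\N$ via a fixed homeomorphism, so every point of $\Ca^\Ga$ is viewed as a function $\Ga \times \N \to \{0,1\}$. For each $i \ge 1$ define
$$\Phi_i(x)(g)_k = \begin{cases} \Phi(x)(g)_k & \text{if } k < i, \\ \Psi(x)(g)_{k-i} & \text{if } k \ge i. \end{cases}$$
Borel measurability and $\Ga$-equivariance of $\Phi_i$ are inherited coordinate-wise from $\Phi$ and $\Psi$. Injectivity is clear: the tail coordinates of $\Phi_i(x)$ reconstruct $\Psi(x)$, and $\Psi$ is injective. Finally, $\Phi_i(x)(g)$ and $\Phi(x)(g)$ agree in their first $i$ binary coordinates for all $x,g$, so they differ by at most $2^{-i}$ in the standard metric on $\{0,1\}^\N$, giving uniform convergence $\Phi_i \to \Phi$ on all of $X$.

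There is no real obstacle here; the whole argument is a packaging of the self-similarity $\Ca \cong \Ca \times \Ca$ to equivariantly interpolate between the possibly non-injective $\Phi$ and the fixed injective reference map $\Psi$. The only mild point of care is to check that $\Phi_i$ is a genuine Borel equivariant injection (not merely modulo null sets), so that the Lusin--Souslin step applies cleanly and the resulting $\mu_i$ is in fact measurably conjugate to $\bfa$.
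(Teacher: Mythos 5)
Your proposal is correct and follows essentially the same route as the paper: both construct an auxiliary equivariant injection $\Psi$ from a countable separating family, interleave the first $i$ coordinates of $\Phi$ with all coordinates of $\Psi$ via the identification $\Ca\cong\{0,1\}^\N$, and take $\mu_i=(\Phi_i)_*\mu$. You merely spell out the Lusin--Souslin and uniform-convergence steps that the paper leaves as ``routine to check.''
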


\begin{proof}
Let $\Psi:X \to \Cantor^\Ga$ be a $\Ga$-equivariant measurable map such that $\Ga \cc (\Ca^\Ga,\Psi_*\mu)$ is measurably conjugate to $\bfa$. To see that such a map exists, identify $\Ca$ with $\{0,1\}^\N$ (where the latter has the product topology). We consider an element $x\in \{0,1\}^\N$ to be a function $x:\N \to \{0,1\}$. Choose a sequence $\psi_i:X \to \{0,1\}$ of measurable maps such that for all distinct elements $x,y \in X$ there exists some $i$ such that $\psi_i(x) \ne \psi_i(y)$. Then define $\Psi(x)(1_\Ga)(n) = \psi_n(x)$ and in general, define $\Psi(x)(g)=\Psi(g^{-1}x)(1_\Ga)$. It is routine to check that this satisfies the claim.

 Define $\Ga$-equivariant maps $\Phi_{n}:X \to \Cantor^\Ga$ so that the first $n$-coordinates of $\Phi_n(x)$ agree with those of $\Phi(x)$ and the last coordinates agree with $\Psi(x)$. In other words, for every $g\in \Ga$,
$$\Phi_n(x)(g) = (\Phi(x)(g)(1),\ldots, \Phi(x)(g)(n), \Psi(x)(g)(1), \Psi(x)(g)(2),\ldots).$$
As above we are identifying $\Ca$ with $\{0,1\}^\N$. Clearly, $\Phi_n$ is $\Ga$-equivariant, is an isomorphism onto its image and $\lim_{n\to\infty} \Phi_{n*}\mu  = \Phi_*\mu$. To finish the lemma, set $\mu_i:=\Phi_{n*}\mu$. 
\end{proof}

Let $\Prob^{erg}_\Ga(\Ca^\Ga) \subset \Prob_\Ga(\Ca^\Ga)$ denote the subset of ergodic measures.

\begin{lem}\label{lem:ess-free}
The subset of all essentially free measures in $\Prob_\Ga(\Ca^\Ga)$ is a $G_\delta$ set. Moreover, this subset is dense in $\Prob_\Ga(\Ca^\Ga)$ and its intersection with $\Prob^{erg}_\Ga(\Ca^\Ga)$ is dense in $\Prob^{erg}_\Ga(\Ca^\Ga)$. 
\end{lem}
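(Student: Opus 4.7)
The plan is to handle the three assertions separately: the $G_\delta$ claim follows from upper semicontinuity of measure on closed sets, while both density claims follow from a direct product trick combined with Lemma \ref{lem:2}.

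First, for the $G_\delta$ claim, I observe that for each $g \in \Ga \setminus \{1_\Ga\}$ the fixed-point set $\Fix(g) := \{x \in \Ca^\Ga : gx = x\}$ is closed in $\Ca^\Ga$, since $\Ga$ acts by homeomorphisms on the product space. By the Portmanteau theorem, the map $\mu \mapsto \mu(\Fix(g))$ is upper semicontinuous on $\Prob_\Ga(\Ca^\Ga)$ with respect to the weak* topology, so $\{\mu : \mu(\Fix(g)) = 0\} = \bigcap_{n \ge 1} \{\mu : \mu(\Fix(g)) < 1/n\}$ is a $G_\delta$. A measure $\mu$ is essentially free precisely when $\mu(\Fix(g)) = 0$ for all $g \ne 1_\Ga$, and intersecting over the countable set $\Ga \setminus \{1_\Ga\}$ preserves the $G_\delta$ property.

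Next, for density in $\Prob_\Ga(\Ca^\Ga)$, given any $\mu$ I would fix a nontrivial probability space $(K, \kappa)$ and pass to the direct product action $\Ga \cc (\Ca^\Ga \times K^\Ga, \mu \times \kappa^\Ga)$. Since $\Ga$ is infinite, the Bernoulli shift $\Ga \cc (K^\Ga, \kappa^\Ga)$ is essentially free, and hence so is the product. The coordinate projection $\Phi : \Ca^\Ga \times K^\Ga \to \Ca^\Ga$ is $\Ga$-equivariant with $\Phi_*(\mu \times \kappa^\Ga) = \mu$, so Lemma \ref{lem:2} applied to the product action and $\Phi$ yields a sequence $\mu_i \in \Prob_\Ga(\Ca^\Ga)$ with $\Ga \cc (\Ca^\Ga, \mu_i)$ measurably conjugate to the essentially free product, and $\mu_i \to \mu$ in weak*. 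Each $\mu_i$ is therefore essentially free, proving density.

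Finally, for the ergodic density claim I would run exactly the same construction, now using the fact that Bernoulli shifts over any infinite countable group are weakly mixing, so that the product of an ergodic measure with a Bernoulli shift is ergodic. Hence if $\mu$ is ergodic, the extension $\mu \times \kappa^\Ga$ is both ergodic and essentially free, and the approximating measures $\mu_i$ produced by Lemma \ref{lem:2} inherit both properties. I do not anticipate any serious obstacle here: the only subtlety worth flagging is checking that the product of an ergodic pmp action with a Bernoulli shift remains ergodic for arbitrary (possibly nonamenable) $\Ga$, which is standard once one knows Bernoulli shifts are weakly mixing in this generality.
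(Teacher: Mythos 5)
Your proof is correct and follows essentially the same route as the paper: the $G_\delta$ part via the Portmanteau theorem applied to the closed sets $\Fix(g)$, and both density claims via taking the direct product with a Bernoulli shift (which is essentially free, preserves ergodicity, and factors onto the original action) and then invoking Lemma \ref{lem:2}. The only difference is that you spell out the weak-mixing justification for ergodicity of the product, which the paper leaves implicit.
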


\begin{proof}
For any element $g \in \Ga$, let $\Fix(g)=\{x \in \Ca^\Ga:~gx=x\}$. Then $\Fix(g)$ is compact in $\Ca^\Ga$. By the Portmanteau Theorem, for every $\epsilon>0$, the set $\{\mu \in \Prob_\Ga(\Ca^\Ga):~ \mu(\Fix(g)) < \epsilon\}$ is open. Therefore,
$$\bigcap_{g\in \Ga-\{1_\Ga\}} \bigcap_{n=1}^\infty \{\mu \in \Prob_\Ga(\Ca^\Ga):~ \mu(\Fix(g)) < 1/n\}$$
is a $G_\delta$ set. The above set is the same as the subset of essentially free measures. This proves the first claim.

To prove the second claim, let $\mu \in \Prob_\Ga(\Ca^\Ga)$ be arbitrary. We observe that the direct product of $\Ga \cc (\Ca^\Ga,\mu)$ with a Bernoulli shift is essentially free and factors onto $\Ga \cc (\Ca^\Ga,\mu)$. Moreover this product is ergodic if $\mu$ is ergodic. So Lemma \ref{lem:2} implies that $\mu$ is a weak* limit of essentially free measures and these measures can be chosen to be ergodic if $\mu$ is ergodic.
\end{proof}

The next step shows that the generic {\em ergodic} measure has zero Rokhlin entropy. 

\begin{prop}\label{prop:ergodic}
 The subset of measures $\mu \in  \Prob^{erg}_\Ga(\Ca^\Ga)$ such that the corresponding action $\Ga \cc (\Ca^\Ga,\mu)$ is essentially free and has zero Rokhlin entropy is a dense $G_\delta$.
\end{prop}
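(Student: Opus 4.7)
The $G_\delta$ claim is immediate from what has already been established. By Lemma~\ref{lem:g-delta1}, the set of ergodic measures with zero Rokhlin entropy is $G_\delta$ in $\Prob_\Ga(\Ca^\Ga)$, and by Lemma~\ref{lem:ess-free} the subset of essentially free measures is $G_\delta$; intersecting these two with $\Prob^{erg}_\Ga(\Ca^\Ga)$ (itself $G_\delta$ by Lemma~\ref{lem:g-delta0}) gives the required $G_\delta$ subset. So the substantive content of the proposition is density in $\Prob^{erg}_\Ga(\Ca^\Ga)$.

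My plan for density is as follows. By Lemma~\ref{lem:ess-free}, essentially free ergodic measures are dense in $\Prob^{erg}_\Ga(\Ca^\Ga)$, so it suffices to approximate an arbitrary essentially free ergodic $\mu$ by essentially free ergodic measures with zero Rokhlin entropy. Assuming $\Ga$ is nonamenable, I apply Theorem~\ref{thm:zero-extension} to $\Ga \cc (\Ca^\Ga, \mu)$ to produce an essentially free ergodic extension $\Ga \cc (\tX, \tmu)$ with $h^{Rok}_\Ga(\tX,\tmu)=0$, together with a factor map $\pi : \tX \to \Ca^\Ga$ satisfying $\pi_* \tmu = \mu$. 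Feeding $\pi$ into Lemma~\ref{lem:2} produces a sequence $\mu_i \to \mu$ in $\Prob_\Ga(\Ca^\Ga)$ such that each $\Ga \cc (\Ca^\Ga, \mu_i)$ is measurably conjugate to $\Ga \cc (\tX, \tmu)$. Since essential freeness, ergodicity, and vanishing Rokhlin entropy are all conjugacy invariants, each $\mu_i$ lies in the target set, giving the required approximation.

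When $\Ga$ is amenable and infinite, the conclusion follows instead from Rudolph's classical argument referenced in the introduction: the Rokhlin tower lemma implies that the measure-conjugacy class of any essentially free ergodic action of $\Ga$ is dense in the essentially free ergodic measures in $\Prob_\Ga(\Ca^\Ga)$, so one fixes any zero-entropy essentially free ergodic $\Ga$-action (e.g.\ an inverse limit of finite permutation actions on a profinite completion, joined with a small auxiliary action to ensure freeness) and applies Lemma~\ref{lem:2} to realize its conjugacy class densely inside $\Prob^{erg}_\Ga(\Ca^\Ga)$.

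The only nontrivial step is the nonamenable case, and there all the hard work has already been done: it rides entirely on Theorem~\ref{thm:zero-extension}, which in turn rests on Seward's generalization of Sinai's Factor Theorem together with the zero Rokhlin entropy, surjects-every-Bernoulli-shift construction of Corollary~\ref{cor:inverse limit}. Once the zero-entropy extension is in hand, Lemma~\ref{lem:2} finishes the approximation with no additional estimates, and no delicate uniformity in $\mu$ is required.
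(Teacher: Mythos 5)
Your proposal is correct and follows essentially the same route as the paper: the $G_\delta$ part from Lemmas~\ref{lem:g-delta1} and \ref{lem:ess-free}, density in the nonamenable case by combining Theorem~\ref{thm:zero-extension} with Lemma~\ref{lem:2} (after reducing to essentially free ergodic $\mu$ via Lemma~\ref{lem:ess-free}), and the amenable case deferred to Rudolph's argument. The only quibble is your parenthetical example of a zero-entropy free ergodic action in the amenable case, which presumes residual finiteness; but since that case rests on the cited classical argument anyway, this does not affect the proof.
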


\begin{proof}
Lemmas \ref{lem:ess-free} and \ref{lem:g-delta1} show that this subset is a $G_\delta$. If $\Ga$ is nonamenable then  it is dense by Lemmas \ref{lem:2}, \ref{lem:ess-free}   and Theorem \ref{thm:zero-extension}. If $\Ga$ is amenable then the result is due to Rudolph (see the Subclaim after Claim 19 in \cite{foreman-weiss}). This uses the fact that Rokhlin entropy agrees with classical entropy by \cite{seward-tucker-drob}. 

\end{proof}

Next we prove that any property that is residual for ergodic measures is automatically residual for all measures. To make this precise, let 
$$\beta: \Prob( \Prob_\Ga^{erg}(\Ca^\Ga) ) \to \Prob_\Ga(\Ca^\Ga)$$
$$\pi: \Prob_\Ga(\Ca^\Ga) \to \Prob( \Prob_\Ga^{erg}(\Ca^\Ga) )$$
denote the barycenter map and the ergodic decomposition map respectively. To be precise, 
$$\beta(\omega): = \int \mu~d\omega(\mu)$$
and $\pi$ is the inverse of $\beta$. 

\begin{prop}\label{prop:non-ergodic}
Let $\cZ_0 \subset \Prob^{erg}_\Ga(\Ca^\Ga)$ be Borel and define 
$$\cZ = \{\mu \in \Prob_\Ga(\Ca^\Ga):~ \pi(\mu)(\cZ_0)=1\}.$$
If $\cZ_0$ is residual in $\Prob^{erg}_\Ga(\Ca^\Ga)$ then $\cZ$ is residual in $\Prob_\Ga(\Ca^\Ga)$.
\end{prop}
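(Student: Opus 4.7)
The plan is to reduce the proposition to two simple observations: that $\cZ_0 \subseteq \cZ$, and that any residual subset of $\Prob^{erg}_\Ga(\Ca^\Ga)$ is automatically residual in the ambient space $\Prob_\Ga(\Ca^\Ga)$.

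First I would note that $\cZ_0 \subseteq \cZ$. Indeed, if $\mu \in \cZ_0$ then $\mu$ is ergodic, so its ergodic decomposition is the Dirac mass $\pi(\mu) = \delta_\mu$ and thus $\pi(\mu)(\cZ_0) = \delta_\mu(\cZ_0) = 1$. Hence it suffices to show that $\cZ_0$ is residual in the larger space $\Prob_\Ga(\Ca^\Ga)$.

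For this I would combine two facts with a standard Baire-category transfer. By Lemma~\ref{lem:g-delta0}, $\Prob^{erg}_\Ga(\Ca^\Ga)$ is a $G_\delta$ in $\Prob_\Ga(\Ca^\Ga)$. Moreover, it is dense: via the Glasner-King correspondence together with the weak Rokhlin property discussed in the introduction, the measure-conjugacy class of any Bernoulli shift is dense in $\Prob_\Ga(\Ca^\Ga)$ and consists entirely of ergodic measures. Hence $\Prob^{erg}_\Ga(\Ca^\Ga)$ is a dense $G_\delta$ subspace of $\Prob_\Ga(\Ca^\Ga)$.

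The Baire-category principle I would invoke is: if $Y$ is a dense $G_\delta$ subset of a Polish space $X$ and $A \subseteq Y$ is residual in $Y$, then $A$ is residual in $X$. The verification is routine: write $A \supseteq \bigcap_n (U_n \cap Y)$ with each $U_n$ open in $X$ and $U_n \cap Y$ dense in $Y$, and write $Y = \bigcap_m V_m$ with $V_m$ open in $X$; the densities of $Y$ in $X$ and of each $U_n \cap Y$ in $Y$ force all of the $U_n$ and $V_m$ to be open and dense in $X$, so that $\bigcap_n U_n \cap \bigcap_m V_m \subseteq A$ is a dense $G_\delta$ in $X$. Applied with $X = \Prob_\Ga(\Ca^\Ga)$, $Y = \Prob^{erg}_\Ga(\Ca^\Ga)$, and $A = \cZ_0$, this yields residuality of $\cZ_0$ in $\Prob_\Ga(\Ca^\Ga)$, and hence of $\cZ \supseteq \cZ_0$ as well. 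No serious obstacle arises: the proof is essentially formal once density of $\Prob^{erg}_\Ga(\Ca^\Ga)$ is recorded, and that density is already built into the Glasner-King/weak Rokhlin framework cited in the introduction.
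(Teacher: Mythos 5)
There is a genuine gap: your argument hinges on the claim that $\Prob^{erg}_\Ga(\Ca^\Ga)$ is dense in $\Prob_\Ga(\Ca^\Ga)$, and this is false in general. By the Glasner--Weiss theorem (cited in the paper as \cite{glasner1997kazhdan} and used in Lemma \ref{lem:gw}), the ergodic measures are dense in $\Prob_\Ga(\Ca^\Ga)$ if and only if $\Ga$ does \emph{not} have property (T); when $\Ga$ has property (T) they form a closed proper subset, hence are not dense and certainly not residual. Your justification of density is also incorrect on its own terms: the measure-conjugacy class of a Bernoulli shift is \emph{not} dense in $\Prob_\Ga(\Ca^\Ga)$ when $\Ga$ is nonamenable --- the introduction of the paper points out that a strongly ergodic action (e.g.\ a Bernoulli shift) has no nonergodic actions in the closure of its conjugacy class. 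The weak Rokhlin property gives a single action with dense conjugacy class, but for a property (T) group that action cannot be ergodic, so it does not yield density of the ergodic measures.

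Your reasoning does correctly handle the case where $\Ga$ fails property (T): there the ergodic measures are a dense $G_\delta$ (density by Glasner--Weiss, $G_\delta$ by Lemma \ref{lem:g-delta0}), the inclusion $\cZ_0 \subseteq \cZ$ holds since $\pi(\mu)=\delta_\mu$ for ergodic $\mu$, and your transfer principle for residual subsets of dense $G_\delta$ subspaces is sound. This is exactly Case 1 of the paper's proof. What is missing is the property (T) case, which the paper treats separately: there the ergodic decomposition map $\pi$ is a homeomorphism from $\Prob_\Ga(\Ca^\Ga)$ onto $\Prob(\Prob^{erg}_\Ga(\Ca^\Ga))$ (Lemma \ref{lem:gw}, using that $\Prob^{erg}_\Ga(\Ca^\Ga)$ is compact), so it suffices to show that $\cY=\{\omega:\ \omega(\cZ_0)=1\}$ is residual in $\Prob(\Prob^{erg}_\Ga(\Ca^\Ga))$; this is done by checking $\cY$ is a $G_\delta$ via the portmanteau theorem and dense by pushing arbitrary measures forward under Borel maps into $\cZ_0$ that move points by less than $1/n$. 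You need to supply an argument of this kind to cover property (T) groups.
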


First we need a lemma:
\begin{lem}\label{lem:gw}
The barycenter map $\beta$ is continuous. The ergodic decomposition map $\pi$ is continuous if and only if $\Ga$ has property (T) in which case it is a homeomorphism.
\end{lem}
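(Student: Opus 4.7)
The plan is to split the lemma into its three assertions: continuity of $\beta$, the equivalence ``$\pi$ continuous $\Leftrightarrow$ $\Ga$ has property (T),'' and the homeomorphism clause.

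Continuity of $\beta$ I will obtain directly from the definition of the weak* topology together with Fubini: for any $f\in C(\Ca^\Ga)$, the map $F:\Prob_\Ga(\Ca^\Ga)\to\R$ given by $F(\mu)=\int f\,d\mu$ is bounded and continuous, so $\omega\mapsto \int f\,d\beta(\omega)=\int F\,d\omega$ is continuous on $\Prob(\Prob_\Ga^{erg}(\Ca^\Ga))$. Varying $f$ yields continuity of $\beta$.

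For the second assertion, my plan is to link continuity of $\pi$ to the closedness of $\Prob_\Ga^{erg}(\Ca^\Ga)$ in $\Prob_\Ga(\Ca^\Ga)$ and then invoke the Glasner--Weiss theorem, which asserts that $\Ga$ has property (T) if and only if $\Prob_\Ga^{erg}(\Ca^\Ga)$ is closed in $\Prob_\Ga(\Ca^\Ga)$. In the direction closedness $\Rightarrow$ continuity, if $\Prob_\Ga^{erg}(\Ca^\Ga)$ is closed it is compact, so $\Prob(\Prob_\Ga^{erg}(\Ca^\Ga))$ is compact and Hausdorff in the weak* topology; the continuous bijection $\beta$ between compact Hausdorff spaces is automatically a homeomorphism, and its inverse $\pi$ is therefore continuous. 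This simultaneously handles the homeomorphism clause. In the reverse direction, suppose $\pi$ is continuous and $\mu_n\to\mu$ with each $\mu_n$ ergodic; then $\pi(\mu_n)=\delta_{\mu_n}\to \pi(\mu)$ in $\Prob(\Prob_\Ga^{erg}(\Ca^\Ga))$, and since the map $\nu\mapsto\delta_\nu$ is a closed embedding of any Polish space into its space of Borel probability measures (by Prokhorov's tightness theorem), the limit must equal $\delta_{\mu'}$ for some ergodic $\mu'$ with $\mu_n\to\mu'$ in $\Prob_\Ga^{erg}(\Ca^\Ga)$. As the subspace topology agrees with the restriction of the ambient weak* topology, $\mu_n\to\mu'$ in $\Prob_\Ga(\Ca^\Ga)$ as well, forcing $\mu=\mu'\in\Prob_\Ga^{erg}(\Ca^\Ga)$.

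The main obstacle is the invocation of Glasner--Weiss; the remainder is standard compactness and Polish-space topology. One small point worth noting is the closed embedding property $\nu\mapsto\delta_\nu$, which follows from the fact that a weakly convergent sequence of Dirac masses on a Polish space is automatically tight and hence has a Dirac limit corresponding to a convergent sequence of points.
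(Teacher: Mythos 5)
Your proof is correct and follows essentially the same route as the paper: the only substantive external input in both is the Glasner--Weiss theorem characterizing property (T) via closedness (equivalently, non-density) of $\Prob_\Ga^{erg}(\Ca^\Ga)$, combined with the standard fact that a continuous bijection from a compact space to a Hausdorff space is a homeomorphism. The paper compresses all of this into one sentence, and your write-up simply supplies the omitted details (the Fubini argument for $\beta$ and the Dirac-embedding/tightness argument for the ``only if'' direction), both of which are sound.
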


\begin{proof}
The first statement is straightforward. The main result of \cite{glasner1997kazhdan} states that if $\Ga$ has property (T) then  $\Prob_\Ga^{erg}(\Ca^\Ga)$ is a closed (and therefore compact) subset of  $\Prob_\Ga(\Ca^\Ga)$. On the other hand, if $\Ga$ does not have (T) then $\Prob_\Ga^{erg}(\Ca^\Ga)$ is dense in $\Prob_\Ga(\Ca^\Ga)$. Since $\beta$ and $\pi$ are bijective, these two statements imply the lemma.
\end{proof}

\begin{proof}[Proof of Proposition \ref{prop:non-ergodic}]
\noindent {\bf Case 1}. Suppose $\Ga$ does not have property (T). By \cite{glasner1997kazhdan} $\Prob_\Ga^{erg}(\Ca^\Ga)$ is dense in $\Prob_\Ga(\Ca^\Ga)$. By Lemma \ref{lem:g-delta0} $\Prob_\Ga^{erg}(\Ca^\Ga)$ is a $G_\delta$. Therefore $\Prob_\Ga^{erg}(\Ca^\Ga)$ is residual in $\Prob_\Ga(\Ca^\Ga)$. So $\cZ_0$ is residual in $\Prob_\Ga(\Ca^\Ga)$. Since $\cZ_0 \subset \cZ$, this proves $\cZ$ is also residual.

\noindent {\bf Case 2}. Suppose $\Ga$ has property (T). Let 
$$\cY=\{\omega \in \Prob( \Prob_\Ga^{erg}(\Ca^\Ga) ):~ \omega(\cZ_0)=1\}.$$
By  Lemma \ref{lem:gw} it suffices to prove that $\cY$ is residual. Since $\cZ_0$ contains a dense $G_\delta$, we may assume without loss of generality that $\cZ_0$ is a dense $G_\delta$. So the portmanteau Theorem implies $\cY$ is a $G_\delta$ subset. 

Let $d$ be a continuous metric on $\Prob_\Ga^{erg}(\Ca^\Ga)$. Because $\cZ_0$ is dense in $\Prob_\Ga^{erg}(\Ca^\Ga)$ for every $n\in \N$ there exists a Borel map $\Phi_n:\Prob_\Ga^{erg}(\Ca^\Ga) \to \cZ_0$  with $d(x,\Phi_n(x))<1/n$ for all $x$. Then for every $\mu \in \Prob(\Prob_\Ga^{erg}(\Ca^\Ga))$, $\Phi_{n*}\mu \to \mu$ in the weak* topology as $n\to\infty$. Since $\Phi_{n*}\mu \in \cY$, this proves $\cY$ is dense. 
\end{proof}

\begin{proof}[Proof of Theorem \ref{thm:zero}]
The main theorem of \cite{alpeev-seward} implies that an action has zero Rokhlin if and only if almost every ergodic component has zero Rokhlin entropy. Also \cite[Corollary 4.4]{seward-kreiger-2} shows that $\cZ_0$ is Borel (where $\cZ_0 \subset \Prob_\Ga^{erg}(\Ca^\Ga)$ is the set of measures with zero Rokhlin entropy). So Theorem \ref{thm:zero} follows from Propositions \ref{prop:ergodic} and \ref{prop:non-ergodic}.
\end{proof}

\begin{remark}
Here is a brief sketch of an alternative proof of Theorem \ref{thm:zero}. Using the nonergodic version of Seward's generalization of Sinai's Theorem \cite{seward-sinai} in the proof of Theorem \ref{thm:zero-extension}, it can be shown that every essentially free pmp action admits a zero Rokhlin entropy extension (ergodicity is not required). The theory of weak equivalence of actions shows that the measure conjugacy class of any action in $A(\Ga,X,\mu)$ contains the conjugacy class of each of its factors. Because essentially free actions are dense in $A(\Ga,X,\mu)$, it follows that zero Rokhlin entropy actions are also dense in $A(\Ga,X,\mu)$.  In \cite[Lemma 8.7]{alpeev-seward}, it is proven that the subset of all zero-Rokhlin entropy actions in $A(\Ga,X,\mu)$ is a $G_\delta$ subset. Alternatively, this can be proven in a manner similar to the proof of Lemma \ref{lem:g-delta1}.
\end{remark}

\section{Naive entropy}\label{sec:naive} 

This section introduces naive entropy. The main result is that zero naive entropy is closed under factors, self-joinings and inverse limits.

 \begin{defn}
Let $\Ga \cc (X,\mu)$ be a pmp action and $\cP$ a partition of $X$. The {\bf naive entropy} of $\cP$ is 
$$h^{naive}_\mu(\cP) = \inf_{W\subset  \subset \Ga} |W|^{-1} H_\mu(\cP^W)$$
where $\subset \subset$ means ``a finite subset of''. The {\bf naive entropy} of $\Ga \cc (X,\mu)$ is 
$$h^{naive}_\Ga(X,\mu) = \sup_\cP h^{naive}_\mu(\cP)$$
where the supremum is over all finite-entropy partitions $\cP$.
\end{defn}

It is an exercise to show that if $\Ga$ is amenable then naive entropy coincides with Kolmogorov-Sinai entropy (we will not need this fact). However if $\Ga$ is nonamenable the situation is very different:

\begin{thm}
If $\Ga$ is nonamenable then every pmp action of $\Ga$ has naive entropy in $\{0,+\infty\}$.
\end{thm}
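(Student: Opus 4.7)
The plan is to exploit the exponential isoperimetric profile of a nonamenable group to bootstrap any positive value of naive entropy into an infinite one. Recall that $\Ga$ is nonamenable iff there exist a finite set $S \subset \Ga$ with $1_\Ga \in S$ and some $\epsilon>0$ such that $|SF| \ge (1+\epsilon)|F|$ for every finite $F \subset \Ga$; iterating gives $|S^n F|\ge (1+\epsilon)^n|F|$ for all $n$. This is the only structural fact about $\Ga$ the argument will use.

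Suppose $h^{naive}_\Ga(X,\mu)>0$, so there is a finite-entropy partition $\cP$ with $c:=h^{naive}_\mu(\cP)>0$. By definition of the infimum, $H_\mu(\cP^W)\ge c|W|$ for every finite $W\subset\Ga$. For each $n\ge 1$, consider the partition $\cP_n:=\cP^{S^n}$. Since $H_\mu(\cP_n)\le |S^n|\,H_\mu(\cP)<\infty$, this is an admissible test partition in the supremum defining $h^{naive}_\Ga(X,\mu)$.

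A direct computation shows that for any finite $V\subset\Ga$,
$$(\cP_n)^V \;=\; \bigvee_{v\in V} v^{-1}\bigvee_{s\in S^n} s^{-1}\cP \;=\; \bigvee_{u\in S^nV} u^{-1}\cP \;=\; \cP^{S^nV}.$$
Therefore
$$H_\mu\bigl((\cP_n)^V\bigr) \;=\; H_\mu\bigl(\cP^{S^nV}\bigr) \;\ge\; c\,|S^nV| \;\ge\; c(1+\epsilon)^n|V|.$$
Dividing by $|V|$ and taking the infimum over finite $V$ yields $h^{naive}_\mu(\cP_n)\ge c(1+\epsilon)^n$, hence $h^{naive}_\Ga(X,\mu)\ge c(1+\epsilon)^n$ for every $n$. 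Letting $n\to\infty$ forces $h^{naive}_\Ga(X,\mu)=+\infty$, completing the dichotomy.

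There is no real obstacle; the argument is purely combinatorial once one recognizes that the right test partitions to feed into the supremum are the ``blown up'' partitions $\cP^{S^n}$, whose existence as finite-entropy partitions is what makes nonamenability so destructive for naive entropy. The only place where one must take care is to note that $\cP^{S^n}$ genuinely has finite Shannon entropy under the mild hypothesis $H_\mu(\cP)<\infty$, which follows from subadditivity.
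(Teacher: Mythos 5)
Your proof is correct and takes essentially the same route as the paper: both hinge on the identity $(\cP^{W})^{V}=\cP^{WV}$ together with the fact that nonamenability makes $\sup_{W}\inf_{V}|WV|/|V|=+\infty$, so that replacing $\cP$ by the finite-entropy partition $\cP^{W}$ multiplies the naive entropy by an arbitrarily large expansion factor. The only cosmetic difference is that you derive the unbounded expansion explicitly by iterating a single expanding set $S$ (taking $W=S^{n}$), whereas the paper simply quotes this consequence of nonamenability.
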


\begin{proof}
Suppose $\Ga \cc (X,\mu)$ and there is a finite-entropy partition $\cP$ of $X$ with $h^{naive}_\mu(\cP)>0$. Let $W \subset \Ga$ be finite. Then
$$h^{naive}_\mu(\cP^W) = \inf_{F \subset \subset \Ga} |F|^{-1} H_\mu(\cP^{WF}) = \inf_{F \subset \subset \Ga}  \frac{H_\mu(\cP^{WF})}{|WF|} \frac{|WF|}{|F|} \ge h^{naive}_\mu(\cP) \inf_{F \subset \subset \Ga}  \frac{|WF|}{|F|}.$$
Since $\Ga$ is nonamenable, for every real number $r>0$ there is a finite $W \subset \Ga$ such that
$$\inf_{F \subset \subset \Ga} \frac{|WF|}{|F|}>r.$$
Hence $\sup_{W \subset \subset \Ga} h_\mu^{naive}(\cP^W) = +\infty$ proving the theorem.
\end{proof}


\begin{defn}
Let $\bfa_i = \Ga \cc (X_i,\mu_i)$ be pmp actions (for $i\in I$ where $I$ is some index set). We always assume $I$ is at most countable. A {\bf joining} of these actions is a $\Ga$-invariant Borel probability measure on the produce space $\prod_i X_i$ whose $i$-th marginal is $\mu_i$. Here $\Ga$ acts on the product diagonally: $(\ga x)_i = \ga x_i$. We also refer to the action $\Ga \cc (\prod_i X_i, \lambda)$ as a {\bf joining}. The joining is said to be {\bf finite} if $I$ is finite and {\bf infinite} otherwise. In the special case that $\bfa_i = \bfa_j$ for all $i,j$, the joining is called a {\bf self-joining}. 
\end{defn}

The main result here is:

\begin{prop}\label{prop:closure}
Zero naive entropy is closed under factors, self-joinings (both finite and infinite) and inverse limits.
\end{prop}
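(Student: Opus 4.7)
The central technical tool I would establish first is the monotonicity estimate
\[
h^{naive}_\mu(\cQ) \;\leq\; h^{naive}_\mu(\cP) + H_\mu(\cQ\mid\cP)
\]
for any pair of finite-entropy partitions $\cP,\cQ$. The derivation uses the chain rule $H_\mu(\cQ^W) \leq H_\mu(\cP^W) + H_\mu(\cQ^W\mid\cP^W)$, followed by the standard bound $H_\mu(\cQ^W\mid\cP^W) \leq \sum_{w\in W} H_\mu(w^{-1}\cQ\mid\cP^W) \leq \sum_{w\in W} H_\mu(w^{-1}\cQ\mid w^{-1}\cP) = |W|H_\mu(\cQ\mid\cP)$ (using subadditivity of conditional entropy, the fact that $\cP^W$ refines $w^{-1}\cP$, and invariance of $\mu$), then dividing by $|W|$ and taking the infimum in $W$. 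A direct consequence is that if $\pi$ is a $\Ga$-equivariant measurable map, the pullback $\pi^{-1}\cR$ has the same $W$-joins at every stage and hence the same naive entropy as $\cR$. The factor case then follows immediately: every finite-entropy partition of $(Y,\nu)$ pulls back with unchanged naive entropy, so $h^{naive}_\Ga(Y,\nu) \leq h^{naive}_\Ga(X,\mu) = 0$.

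For an inverse limit $(X,\mu)=\varprojlim(X_n,\mu_n)$ with compatible factor maps $\pi_n:X\to X_n$, fix a finite-entropy partition $\cQ$ of $X$ and $\epsilon>0$. The increasing union $\bigcup_n \pi_n^{-1}\cB_{X_n}$ generates $\cB_X$ modulo null sets, so martingale convergence applied to the indicator functions of the atoms of $\cQ$ yields an index $n$ and a finite partition $\cR$ of $X_n$ with $H_\mu(\cQ\mid \pi_n^{-1}\cR)<\epsilon$. Each $\pi_n^{-1}\cR$ has naive entropy zero by the factor step, so the central estimate delivers $h^{naive}_\mu(\cQ)<\epsilon$; as both $\cQ$ and $\epsilon$ are arbitrary, $(X,\mu)$ has zero naive entropy.

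The self-joining case is the substantive one. Let $\lambda$ be a self-joining of $\bfa=\Ga\cc(X,\mu)$ indexed by at most countable $I$, with coordinate projections $\pi_i:X^I\to X$. Given a finite-entropy partition $\cQ$ of $X^I$ and $\epsilon>0$, finite-coordinate cylinders generate $\cB_{X^I}$, so density in the Rokhlin metric provides a finite $F\subset I$ and finite partitions $\cR_i$ of $X$ ($i\in F$) with $H_\lambda\bigl(\cQ \mid \bigvee_{i\in F}\pi_i^{-1}\cR_i\bigr)<\epsilon$. Set $\cR:=\bigvee_{i\in F}\cR_i$, a finite-entropy partition of $X$; since $\bfa$ has zero naive entropy, $h^{naive}_\mu(\cR)=0$, so I can select one $W\subset\subset\Ga$ with $|W|^{-1}H_\mu(\cR^W)<\epsilon/|F|$. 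Monotonicity gives the same bound for each $|W|^{-1}H_\mu(\cR_i^W)$, and $\Ga$-equivariance of $\pi_i$ together with subadditivity of Shannon entropy yields
\[
|W|^{-1} H_\lambda\Bigl(\bigvee_{i\in F}\pi_i^{-1}\cR_i^W\Bigr) \;\leq\; |W|^{-1}\sum_{i\in F} H_\mu(\cR_i^W) \;<\;\epsilon,
\]
so $h^{naive}_\lambda(\bigvee_{i\in F}\pi_i^{-1}\cR_i)<\epsilon$, and the central estimate gives $h^{naive}_\lambda(\cQ)<2\epsilon$. The main obstacle I expect is the quantifier ordering in this argument: since $|F|$ depends on $\epsilon$, the cylinder approximation must be fixed first and $W$ chosen only afterwards, adapted to $|F|$; also one cannot treat the $\cR_i$ independently, which is why one passes through the common refinement $\cR$ and uses that \emph{every} finite-entropy partition of $X$ has naive entropy zero, not just individual ones.
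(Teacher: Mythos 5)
Your proposal is correct and takes essentially the same route as the paper: the same Lipschitz-type estimate $h^{naive}_\mu(\cP) \le h^{naive}_\mu(\cQ) + H_\mu(\cP|\cQ)$ (the paper's Lemma \ref{lem:lipschitz}) drives all three closure properties via approximation of an arbitrary finite-entropy partition by partitions measurable with respect to the relevant sub-sigma-algebra. The only packaging difference is that you handle infinite self-joinings in one uniform argument by fixing the finite coordinate set $F$ first and then choosing the witness set $W$, whereas the paper treats the two-fold case, inducts to finite self-joinings, and realizes infinite self-joinings as inverse limits of finite ones; both are fine.
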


We will need the following lemma showing that naive entropy is Lipschitz in the space of partitions.
\begin{lem}\label{lem:lipschitz}
Let $\bfa=\Ga \cc (X,\mu)$ be a pmp action and $\cP,\cQ$ be measurable partitions of $X$ with finite Shannon entropy. Then for any finite $F \subset \Ga$,
$$H_\mu(\cP^F) - H_\mu(\cQ^F) \le |F| H_\mu(\cP|\cQ).$$
Thus
$$h^{naive}_\mu(\cP) - h^{naive}_\mu(\cQ) \le H_\mu(\cP|\cQ).$$
\end{lem}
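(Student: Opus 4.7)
The plan is to exploit the standard inequality $H_\mu(\cA \vee \cB) = H_\mu(\cB) + H_\mu(\cA \mid \cB)$ together with subadditivity and monotonicity of conditional entropy and the $\Ga$-invariance of $\mu$.

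First I would observe that since $\cP^F$ is refined by $\cP^F \vee \cQ^F$, we have
\[
H_\mu(\cP^F) \le H_\mu(\cP^F \vee \cQ^F) = H_\mu(\cQ^F) + H_\mu(\cP^F \mid \cQ^F).
\]
So the task reduces to bounding $H_\mu(\cP^F \mid \cQ^F)$ by $|F| H_\mu(\cP \mid \cQ)$.

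Next, applying subadditivity of conditional Shannon entropy with respect to the refinement $\cP^F = \bigvee_{f \in F} f^{-1}\cP$, then using that conditioning on a finer $\sigma$-algebra decreases entropy (since $\cQ^F$ refines $f^{-1}\cQ$ for each $f \in F$), and finally using $\Ga$-invariance of $\mu$, I would write
\[
H_\mu(\cP^F \mid \cQ^F) \le \sum_{f \in F} H_\mu(f^{-1}\cP \mid \cQ^F) \le \sum_{f \in F} H_\mu(f^{-1}\cP \mid f^{-1}\cQ) = |F| H_\mu(\cP \mid \cQ).
\]
Combining with the first display gives the claimed inequality $H_\mu(\cP^F) - H_\mu(\cQ^F) \le |F| H_\mu(\cP \mid \cQ)$.

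For the second statement, divide both sides by $|F|$ to obtain $|F|^{-1} H_\mu(\cP^F) \le |F|^{-1} H_\mu(\cQ^F) + H_\mu(\cP \mid \cQ)$ for every finite $F \subset \Ga$. Taking the infimum over $F$ on both sides (the constant term $H_\mu(\cP\mid\cQ)$ is unaffected) yields $h^{naive}_\mu(\cP) \le h^{naive}_\mu(\cQ) + H_\mu(\cP \mid \cQ)$, which is the desired Lipschitz bound. There is no real obstacle here; the only thing to keep track of is that each step (subadditivity, monotonicity in the conditioning $\sigma$-algebra, $\Ga$-invariance) is applied cleanly, and that the infimum over $F$ commutes with the additive constant.
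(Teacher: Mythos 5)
Your proof is correct and follows essentially the same route as the paper's: reduce to bounding $H_\mu(\cP^F\mid\cQ^F)$, then apply subadditivity, monotonicity in the conditioning algebra, and $\Ga$-invariance to get the factor $|F|\,H_\mu(\cP\mid\cQ)$, and finally divide by $|F|$ and take infima. The only (cosmetic) difference is that you obtain $H_\mu(\cP^F)-H_\mu(\cQ^F)\le H_\mu(\cP^F\mid\cQ^F)$ via monotonicity under refinement, whereas the paper subtracts the two chain-rule identities for $H_\mu(\cP^F\vee\cQ^F)$; both are immediate.
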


\begin{proof}
Recall that
\begin{eqnarray*}
H_\mu(\cP^F|\cQ^F) &=& H_\mu(\cP^F \vee \cQ^F) - H_\mu(\cQ^F) \\ 
H_\mu(\cQ^F|\cP^F) &=& H_\mu(\cP^F \vee \cQ^F) - H_\mu(\cP^F).
\end{eqnarray*}
Subtracting, we obtain
\begin{eqnarray*}
H_\mu(\cP^F) - H_\mu(\cQ^F) &=& H_\mu(\cP^F|\cQ^F) - H_\mu(\cQ^F|\cP^F) \le H_\mu(\cP^F|\cQ^F)\\
&\le& \sum_{f\in F} H_\mu(f^{-1}\cP|\cQ^F)  \le |F| H_\mu(\cP|\cQ).
\end{eqnarray*}
This proves the first inequality. The second one follows from the first (observe that we need only consider a sequece of $F$'s that realize the naive entropy for $\cQ$).
\end{proof}

\begin{proof}[Proof of Proposition \ref{prop:closure}]
Let us suppose that $\bfa = \Ga \cc (X,\mu)$ is an inverse limit of actions $\bfa_i = \Ga \cc (X_i,\mu_i)$ having zero naive entropy. We will show $\bfa$ has zero naive entropy. Let $\cF_i$ be the Borel sigma-algebra of $X_i$. After pulling back under the factor map, we may identify $\cF_i$ as a sub-sigma-algebra of the Borel sub-sigma-algebra of $X$ which is denoted here by $\cB_X$. Thus $\cF_1 \subset \cF_2 \subset \cdots$ is an increasing sequence of $\Gamma$-invariant sigma-algebras and $\bigvee_i \cF_i = \cB_X$. Because each action $\bfa_i$ has zero naive entropy, if $\cP$ is any partition of $X$ satisfying $\cP \subset \cF_i$ for some $i$ and $H_\mu(\cP)<\infty$ then necessarily $h^{naive}_\mu(\cP)=0$. 

Let $\cP$ be an arbitrary measurable partition of $X$ with finite Shannon entropy. Since $\inf_i H_\mu(\cP|\cF_i) = 0$, for any $\epsilon>0$ there exists an $i$ and a partition $\cQ \subset \cF_i$ with finite Shannon entropy such that $H_\mu(\cP|\cQ) < \epsilon$. By Lemma \ref{lem:lipschitz}, $h^{naive}_\mu(\cP) \le \epsilon + h^{naive}_\mu(\cQ) = \epsilon$. Since $\epsilon$ and $\cP$  are arbitrary, this implies $\bfa$ has zero naive entropy and therefore zero naive entropy is closed under inverse limits.

Next suppose $\bfa = \Ga \cc (X,\mu)$ has zero naive entropy and let $\lambda$ be a self-joining of $\bfa$. We regard $\lambda$ as a measure on $X\times X$. If $\cP$ is any partition of $X \times X$ with $H_\lambda(\cP)<\infty$ and $\epsilon>0$ is arbitrary then there exists a partition $\cQ$ of $X$ with finite Shannon entropy such that $H_\lambda(\cP | \cQ \times \cQ) <\epsilon$. So Lemma \ref{lem:lipschitz} implies 
$$h^{naive}_\lambda(\cP) \le \epsilon+ h^{naive}_{\lambda}(\cQ \times \cQ).$$
Since $\cQ \times \cQ = (\cQ \times \{X\}) \vee (\{X\} \times \cQ)$, 
\begin{eqnarray*}
h^{naive}_{\lambda}(\cQ \times \cQ) &=& \inf_{F \subset \subset \Ga} \frac{H_\lambda( (\cQ \times \cQ)^F)}{|F|} =  \inf_{F \subset \subset \Ga} \frac{H_\lambda( \cQ^F \times \cQ^F)}{|F|} \\
&\le& \inf_{F \subset \subset \Ga} \frac{H_\lambda( \cQ^F \times \{X\}) + H_\lambda(\{X\} \times \cQ^F)}{|F|}  = \inf_{F \subset \subset \Ga} \frac{2 H_\mu( \cQ^F) }{|F|} =  2 h^{naive}_\mu(\cQ)  = 0.
\end{eqnarray*}
 Thus $h^{naive}_\lambda(\cP)  \le \epsilon$. Since $\epsilon$ and $\cP$  are arbitrary, this implies $\lambda$ has zero naive entropy and by induction, zero naive entropy is closed under finite self-joinings. Any infinite self-joining is an inverse limit of finite self-joinings.  So the above results show that zero naive entropy is closed under infinite self-joinings.  It is immediate from the definitions that zero naive entropy is closed under factors. 

\end{proof}

I do not know whether zero naive entropy is closed under joinings. For example if two actions have zero naive entropy does their direct product also have zero naive entropy?

\section{Five strengthenings of zero entropy}\label{sec:strengthening} 

Here we introduce five strengthenings of the notion of zero entropy. First we need the following definitions: 

\begin{defn}
An action $\Ga \cc (X,\mu)$ has {\bf completely positive Rokhlin entropy} (denoted R-CPE) if every nontrivial factor has positive Rokhlin entropy.
\end{defn}

\begin{defn}
Two actions are said to be {\bf disjoint} if the only joining between them is the product joining. 
\end{defn}

\begin{thm}
Let $\bfa=\Ga \cc (X,\mu)$ be an ergodic essentially free pmp action. Consider the following five properties:
\begin{enumerate}
\item $\bfa$ has {\bf completely zero entropy} (this means every essentially free factor of $\bfa$ has zero Rokhlin entropy);
\item $\bfa$ is disjoint from all Bernoulli shifts over $\Ga$;
\item $\bfa$ is disjoint from all R-CPE actions of $\Ga$;
\item every factor of every self-joining (including infinite self-joinings) of $\bfa$ has zero Rokhlin entropy;
\item $\bfa$ has zero naive entropy.
\end{enumerate}
Then $1 \Leftarrow 2$ and  $3 \Leftarrow 4 \Leftarrow 5$. Moreover, if $\Ga$ is sofic then $2 \Leftarrow 3$. 
\end{thm}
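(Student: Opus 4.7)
The four nontrivial implications are $2\Rightarrow 1$, $5\Rightarrow 4$, $4\Rightarrow 3$, and (for sofic $\Ga$) $3\Rightarrow 2$; each rests on a different tool and I would treat them in that order.

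The implication $2\Rightarrow 1$ is essentially immediate from Seward's generalization of Sinai's Factor Theorem (Theorem \ref{thm:seward-sinai}). Arguing the contrapositive: if $\bfa$ has an essentially free factor $\bfb$ with positive Rokhlin entropy, then $\bfb$, and hence $\bfa$, admits a nontrivial Bernoulli factor $\bfB$. The graph joining $(\mathrm{id},\pi)_*\mu$, where $\pi:X\to B$ is the composed factor map, is a nonproduct joining of $\bfa$ with $\bfB$, so $\bfa$ is not disjoint from all Bernoulli shifts.

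For $5\Rightarrow 4$ I would first invoke Proposition \ref{prop:closure}, which guarantees that zero naive entropy passes to every factor and every (finite or infinite) self-joining of $\bfa$. It then remains to upgrade ``zero naive'' to ``zero Rokhlin'' for each such action. For essentially free ergodic actions this follows from Theorem \ref{thm:seward-sinai}: positive Rokhlin entropy would yield a Bernoulli factor, whose coordinate partition has strictly positive naive entropy (equal to the Shannon entropy of the base for amenable $\Ga$, and forcing $h^{naive}=+\infty$ for nonamenable $\Ga$ by the dichotomy proved in \S\ref{sec:naive}), contradicting factor-monotonicity of naive entropy. Non-essentially-free factors are dispatched by descending to $\Ga/\mathrm{ker}$: a finite quotient forces triviality (nontrivial finite-group actions have strictly positive naive entropy by direct computation) and an infinite quotient reduces to the essentially free case after iteration.

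The technical core is $4\Rightarrow 3$, which I would prove by an \emph{infinite relatively independent self-joining}. Given a nonproduct joining $\lambda$ of $\bfa$ with an R-CPE action $\bfc = \Ga\cc(Y,\nu)$, disintegrate $\lambda = \int \lambda_y\, d\nu(y)$; the $\Ga$-invariance of $\lambda$ forces the equivariance $\lambda_{gy}=g_*\lambda_y$, so
\[
\mu^{*\infty} := \int \lambda_y^{\otimes \N}\, d\nu(y)
\]
is a $\Ga$-invariant (countably) infinite self-joining of $\bfa$ on $X^\N$ with the diagonal action $g(x_i)_i=(gx_i)_i$. The tail $\sigma$-algebra $\cT=\bigcap_{n}\sigma(x_n,x_{n+1},\ldots)$ is $\Ga$-invariant (the $\Ga$-action preserves the $\N$-indexing), so it defines a factor of $\mu^{*\infty}$. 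Conditioning on $y$ the coordinates are i.i.d.\ with law $\lambda_y$, hence the strong law of large numbers identifies $\cT$ (mod $\mu^{*\infty}$-null) with the pullback of the smallest sub-$\sigma$-algebra of $(Y,\nu)$ that separates the map $y\mapsto\lambda_y$. Let $\bfc'$ denote the corresponding factor of $\bfc$: nonproduct-ness of $\lambda$ forces $\bfc'$ to be nontrivial, and R-CPE of $\bfc$ then yields $h^{Rok}_\Ga(\bfc')>0$, contradicting hypothesis 4 applied to $\mu^{*\infty}$.

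Finally, $3\Rightarrow 2$ when $\Ga$ is sofic reduces to showing that Bernoulli shifts over sofic groups are R-CPE. This follows from Kerr's completely positive sofic entropy theorem for Bernoulli shifts (every nontrivial factor has positive sofic entropy) together with the general inequality sofic entropy $\le$ Rokhlin entropy. The main obstacle is the tail-factor identification in $4\Rightarrow 3$: one must verify $\Ga$-equivariance of $\cT$ and, most crucially, that the resulting sub-factor $\bfc'$ really is nontrivial whenever $\lambda\ne\mu\times\nu$, which is where the SLLN-based reconstruction of $y\mapsto\lambda_y$ from asymptotic empirical distributions does the work. A secondary subtlety is the non-essentially-free factor case in $5\Rightarrow 4$, handled as above.
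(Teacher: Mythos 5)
Your treatments of $2\Rightarrow 1$ and (for sofic $\Ga$) $3\Rightarrow 2$ coincide with the paper's: Theorem \ref{thm:seward-sinai} produces a Bernoulli factor and hence a nonproduct graph joining, and Kerr's completely positive sofic entropy theorem for Bernoulli shifts together with the inequality sofic entropy $\le$ Rokhlin entropy shows Bernoulli shifts are R-CPE. For $4\Rightarrow 3$ the paper simply cites the relative independence theorem (Glasner, Theorem 6.25); your explicit construction of $\int \lambda_y^{\otimes\N}\,d\nu(y)$ and the identification of its tail $\sigma$-algebra with the factor of the R-CPE action generated by $y\mapsto\lambda_y$ is precisely the standard proof of that theorem, and it is correct --- indeed for the contradiction you only need the SLLN half, namely that $(x_i)_i\mapsto\lim_n \tfrac1n\sum_{i\le n}\delta_{x_i}$ is a tail-measurable factor map onto a nontrivial common factor of the infinite self-joining and of $\bfc$. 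Up to this point you have reproduced the paper's argument, with the cited black box usefully unpacked.

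The genuine gap is in $5\Rightarrow 4$. The paper does not route this step through Sinai's theorem: it combines Proposition \ref{prop:closure} with the inequality $h^{Rok}_\Ga\le h^{naive}_\mu(\cP)$ for a generating partition $\cP$ (Theorem 1.5 of Seward's weak containment paper), so that ``zero naive implies zero Rokhlin'' is immediate. Your substitute --- positive Rokhlin entropy yields a Bernoulli factor of positive naive entropy --- works for ergodic essentially free actions, but factors of self-joinings of $\bfa$ need be neither, and your patch for the non-free case fails. The parenthetical claim that a nontrivial action factoring through a finite quotient has strictly positive naive entropy contradicts Lemma \ref{lem:thing3}: any action with infinite kernel has \emph{zero} naive entropy, while a nontrivial action of $\Ga$ on a finite coset space $\Ga/N$ has Rokhlin entropy bounded below by the least Shannon entropy of a nontrivial partition of $\Ga/N$, hence positive; so you cannot deduce triviality there. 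Likewise, descending to $\Ga/\Ker$ does not preserve the hypothesis: since the infimum defining $h^{naive}$ runs over finite subsets of the acting group, one only has $h^{naive}_{\Ga}(\cP)\le h^{naive}_{\Ga/K}(\cP)$, with strict inequality exactly in the relevant case ($h^{naive}_\Ga\equiv 0$ whenever $K$ is infinite, by Lemma \ref{lem:thing3}, whatever the quotient action does), so zero naive entropy over $\Ga$ gives no control over the quotient action; and non-ergodic factors are not addressed at all. The repair is to abandon the Sinai-theorem route here and invoke the entropy inequality the paper uses (noting that even there one should be attentive to the freeness hypothesis under which that inequality is proved, which is the same delicate point your argument stumbles on).
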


\begin{remark}
When $\Ga$ is amenable, all five properties listed above are equivalent because naive entropy and Rokhlin entropy agree with Kolmogorov-Sinai entropy (at least for ergodic essentially free actions). However when $\Ga$ is nonamenable, it is an open problem whether any or all of the implications above can be reversed. 
\end{remark}

\begin{remark}
If $\Ga$ is nonsofic then we do not know whether Bernoulli shifts over $\Ga$ have positive Rokhlin entropy. This is why we cannot say whether $2 \Leftarrow 3$ unconditionally. See \cite{seward-kreiger-2} for partial results on this problem.
\end{remark}

\begin{proof}
($1 \Leftarrow 2$) This is immediate from Seward's generalization of Sinai's Factor Theorem \ref{thm:seward-sinai}  which states that any ergodic essentially free action with positive entropy surjects onto a Bernoulli shift. Thus if $\bfa$ has a factor with positive entropy then it has a Bernoulli factor $\phi:X \to Y$. The corresponding factor joining is the measure $(\textrm{id}_X \times \phi)_*\mu$. This is a non-product joining. 

($2 \Leftarrow 3$, assuming $\Ga$ is sofic) Since $\Ga$ is sofic, Bernoulli shifts have completely positive entropy by \cite{kerr-cpe}. This uses the fact that sofic entropy is a lower bound for Rokhlin entropy.

($3 \Leftarrow 4$) Let $\bfb$ be another pmp action of $\Ga$ and suppose that $\bfb$ and $\bfa$ admit a nonproduct joining. It follows from the relative independence theorem \cite[Theorem 6.25]{glasner-joinings-book} that there exists an infinite self-joining $\lambda$ of $\bfa$ such that $\Ga \cc (X^\N,\lambda)$ and $\bfb$ admit a nontrivial common factor. Therefore, $\bfb$ cannot be R-CPE.

($4 \Leftarrow 5$) This follows from Proposition \ref{prop:closure} and \cite[Theorem 1.5]{seward-weak-containment} which states that the naive entropy of a generating partition is an upper bound for the Rokhlin entropy. Therefore zero naive entropy implies zero Rokhlin entropy.
\end{proof}

\section{Zero naive entropy}\label{sec:zne}

For an arbitrary group $\Ga$, it is an open problem whether $\Ga$ has an essentially free pmp action with zero naive entropy. However for special classes  of groups we will show not only do such actions exist, they are generic. First we need a definition:

\begin{defn}
The {\bf profinite completion} of $\Ga$ is the inverse limit of the groups of the form $\Ga/N$ where $N\vartriangleleft \Ga$ has finite index in $\Ga$. It is a compact group on which $\Ga$ acts by left translations. The group $\Ga$ is said to be {\bf residually finite} if any one of the following equivalent conditions hold:
\begin{itemize}
\item the action of $\Ga$ on its profinite completion is essentially free;
\item for every non-identity element $g \in \Ga$ there exists a finite-index subgroup $H \le \Ga$ such that $g \notin H$;
\item there exists a decreasing sequence of finite-index normal subgroups $\Ga \ge H_1 \ge H_2 \ge \cdots$ such that $\cap_i H_i = \{1\}$. 
\end{itemize}
\end{defn}

\begin{defn}
Let $p_\Ga$ denote the action of $\Ga$ on its profinite completion by left-translations. This is a pmp action where the measure on the profinite completion is its Haar measure. Also let $\iota$ denote the trivial action of $\Ga$ on the unit interval with respect to Lebesgue measure (the trivial action is the action in which every group element fixes every point). 

A group $\Ga$ has {\bf MD} if the measure conjugacy class of the direct product action $p_\Ga \times \iota$ is dense in the space of actions $A(\Ga,X,\mu)$. Equivalently, $\Ga$ has MD if the subset of measures in $\Prob_\Ga(\Cantor^\Ga)$ with finite support is dense in the weak* topology. This definition is due to Kechris \cite{kechris-2012}; it is a strengthening of property FD which was considered earlier by Lubotzky-Shalom \cite{lubotzky-shalom-2004} in their study of unitary representations.
\end{defn}



\begin{thm}\label{thm:md10}
Free groups, surface groups and fundamental groups of closed hyperbolic 3-manifolds have MD.
\end{thm}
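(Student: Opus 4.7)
The plan is to establish MD separately for each of the three classes, all arguments ultimately reducing to an approximation principle on top of residual finiteness. Recall that $\Ga$ has MD iff every pmp action is weakly contained in the profinite action $p_\Ga\times\iota$, or equivalently, iff the finitely supported measures are weak* dense in $\Prob_\Ga(\Ca^\Ga)$.

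For a free group $\F_n=\langle s_1,\dots,s_n\rangle$ I would reproduce the argument from \cite{kechris-2012}. An action is recorded as a tuple $(T_1,\dots,T_n)\in\Aut(X,\mu)^n$, and one approximates $T_1,\dots,T_n$ in the weak topology (jointly on a common refining Rokhlin tower) by permutations $\pi_1,\dots,\pi_n$ of a finite measurable partition $\cP$. Because $\F_n$ is free on the $s_i$, the tuple $(\pi_1,\dots,\pi_n)$ automatically determines an action of $\F_n$ on the atoms of $\cP$ that factors through a finite quotient of $\F_n$. Taking a direct product with a small trivial $[0,1]$-factor produces a measure conjugate to $p_{\F_n}\times\iota$ that approximates the original action, yielding MD.

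For a surface group $\pi_1(\Sigma_g)$ with $g\ge 2$ the single relation $\prod_i[a_i,b_i]=1$ obstructs the above: any approximating tuple of permutations must satisfy this relation exactly rather than merely approximately. I would instead use residual finiteness of $\pi_1(\Sigma_g)$ together with the fact that every finite quotient arises as the deck group of a finite cover of $\Sigma_g$. The key input is an approximation lemma showing that any pmp action of $\pi_1(\Sigma_g)$ can, on prescribed clopen data, be approximated by a co-induced action built from a finite cover; a construction of this form, invoking the geometry and LERF-type properties of hyperbolic surfaces to control the combinatorics of covers, is the technical heart of this case.

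For a closed hyperbolic $3$-manifold group $\Ga$ I would invoke Agol's virtually fibered theorem, which provides a finite-index subgroup $\Ga'\cong\pi_1(\Sigma_h)\rtimes\Z$ for some $h\ge 2$. Granted MD for surface groups and (trivially) for $\Z$, one concludes by two preservation principles: MD passes up along finite-index inclusions via co-induction of actions, and MD is preserved by extensions with amenable quotient (the finite-quotient models on the kernel can be lifted through the quotient using amenability to control the relations). The main obstacle I anticipate is the surface group step: handling the global constraint imposed by the surface relation, and in particular arranging that the approximating action factors through a finite quotient of $\pi_1(\Sigma_g)$ rather than merely of $\F_{2g}$, requires substantially more geometric input than the free group case. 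Once that is in place, the $3$-manifold step is a comparatively routine assembly of preservation results applied to Agol's theorem.
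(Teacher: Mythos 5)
Your outline correctly identifies the shape of the argument in the literature (note that the paper itself does not prove Theorem \ref{thm:md10}; it only cites \cite{kechris-2012}, \cite{MR2026846} for free groups and \cite{bowen-tucker-2013} plus Agol \cite{agol-virtual-haken} for the rest), and your free-group case is essentially the Kechris/Bowen argument and is fine. But the other two cases contain genuine gaps. For surface groups you explicitly leave the key approximation lemma unproved, and the route you gesture at (LERF and combinatorics of finite covers) is not how the result is actually obtained: in \cite{bowen-tucker-2013} one writes $\pi_1(\Sigma_g)$ as an extension $1 \to F \to \pi_1(\Sigma_g) \to \Z \to 1$ with $F$ free of infinite rank, and proves a weak-containment statement for the co-induction from $F$ to $\pi_1(\Sigma_g)$; the point is to reduce to the free-group case through the $\Z$-quotient, not to attack the surface relation directly.

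The more serious problem is in the $3$-manifold step, where you invoke as a ``preservation principle'' that MD passes through extensions with amenable quotient. That is not an available black box: it is precisely (a strong form of) Kechris's co-induction question, which is open in general and for which \cite{bowen-tucker-2013} only establish positive answers under additional hypotheses. Concretely, to approximate a co-induced profinite action of the kernel $N$ by actions of $\Ga$ factoring through finite quotients, one needs (i) the relevant weak containment $a \prec \mathrm{CInd}_N^\Ga(a)\!\upharpoonright\! N$, which is proved there only for quotients $\Ga/N$ that are finite or isomorphic to $\Z$ and for suitable actions, and (ii) a compatibility between finite-index subgroups of $N$ and finite-index subgroups of $\Ga$ (every finite-index $K\le N$ must contain $\Ga_0\cap N$ for some finite-index $\Ga_0\le\Ga$), which is verified using the specific structure of surface-by-cyclic groups. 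Without these inputs the assembly ``Agol $+$ MD for surface groups $+$ MD for $\Z$ $\Rightarrow$ MD for $\pi_1(\Sigma_h)\rtimes\Z$'' does not go through; only the passage up a finite-index inclusion (for residually finite ambient groups) is a routine preservation result. So both the surface-group lemma and the amenable-extension step need to be supplied, and in the published proof they are the content of the theorem rather than corollaries of general principles.
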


\begin{proof}
The case of free groups was proven independently by Kechris \cite{kechris-2012} and Bowen \cite{MR2026846}. The rest was proven in \cite{bowen-tucker-2013}. The case of fundamental groups of closed hyperbolic 3-manifolds relies on Agol's virtual fibering Theorem \cite{agol-virtual-haken}.
\end{proof}

Let {\bf ZNE} denote the subset of measures $\mu \in  \Prob_\Ga(\Ca^\Ga)$ with  zero naive entropy.

\begin{lem}\label{lem:thing2}
For any countable group $\Ga$, {\bf ZNE} is a $G_\delta$ subset of $\Prob_\Ga(\Ca^\Ga)$. 
\end{lem}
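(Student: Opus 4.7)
The plan is to exploit the Lipschitz property of naive entropy (Lemma \ref{lem:lipschitz}) together with continuity of Shannon entropy on clopen partitions, so that vanishing of naive entropy can be tested on a countable dense family of partitions. Concretely, fix a countable collection $\sD$ of clopen partitions of $\Ca^\Ga$, for example all finite partitions whose parts are finite unions of basic cylinder sets determined by finitely many coordinates. Since clopen sets are dense in the measure algebra for every Borel probability measure on $\Ca^\Ga$, $\sD$ is dense in $\sP(\Ca^\Ga,\mu)$ in the Rokhlin metric $d^{Rok}$ for every $\mu \in \Prob_\Ga(\Ca^\Ga)$.

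Next I would observe that for each $\cP \in \sD$ and each finite $W \subset \Ga$, the refinement $\cP^W$ is again a clopen partition, so the map $\mu \mapsto H_\mu(\cP^W)$ is continuous on $\Prob_\Ga(\Ca^\Ga)$. Consequently, for each $n \ge 1$ the set
\[
\sO(\cP,n) := \bigcup_{W \subset\subset \Ga} \Bigl\{\mu \in \Prob_\Ga(\Ca^\Ga) : |W|^{-1} H_\mu(\cP^W) < 1/n \Bigr\}
\]
is open, and therefore $\{\mu : h^{naive}_\mu(\cP) = 0\} = \bigcap_{n \ge 1} \sO(\cP,n)$ is a $G_\delta$.

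The key step is then to check the identity
\[
\textbf{ZNE} \;=\; \bigcap_{\cP \in \sD} \{\mu : h^{naive}_\mu(\cP) = 0\},
\]
which being a countable intersection of $G_\delta$'s gives the result. The inclusion $\subseteq$ is immediate from the definition of naive entropy. For the reverse inclusion, suppose $\mu$ annihilates every $\cP \in \sD$, and let $\cQ$ be any partition of $\Ca^\Ga$ with $H_\mu(\cQ) < \infty$. Given $\epsilon > 0$, density of $\sD$ in $\sP(\Ca^\Ga,\mu)$ gives $\cP \in \sD$ with $H_\mu(\cQ|\cP) < \epsilon$, so by Lemma \ref{lem:lipschitz}
\[
h^{naive}_\mu(\cQ) \;\le\; h^{naive}_\mu(\cP) + H_\mu(\cQ|\cP) \;<\; \epsilon.
\]
Letting $\epsilon \searrow 0$ shows $h^{naive}_\mu(\cQ) = 0$, and since $\cQ$ was arbitrary, $h^{naive}_\Ga(\Ca^\Ga,\mu) = 0$.

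There is no serious obstacle here; the argument is a straightforward application of the continuity of $\mu \mapsto H_\mu(\cP^W)$ for clopen $\cP$ and the Lipschitz estimate already recorded in Lemma \ref{lem:lipschitz}. The only point requiring a little care is that the countable family $\sD$ must be dense in $\sP(\Ca^\Ga,\mu)$ simultaneously for every measure $\mu$, which is guaranteed by the fact that cylinder-based clopen partitions generate the Borel $\sigma$-algebra of $\Ca^\Ga$ and thus approximate arbitrary measurable partitions in $L^1(\mu)$ uniformly in $\mu$.
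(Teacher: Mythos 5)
Your proof is correct and follows essentially the same route as the paper: clopen partitions make $\mu \mapsto H_\mu(\cP^W)$ weak* continuous, so zero naive entropy of each fixed clopen partition is a $G_\delta$ condition, and Lemma \ref{lem:lipschitz} transfers vanishing from the countable clopen family to arbitrary finite-entropy partitions. The only cosmetic difference is that the paper intersects over an increasing generating sequence $\cP_n$ rather than over all cylinder-based clopen partitions, which changes nothing of substance.
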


\begin{proof}
Let $\cP_n$ be an increasing sequence of finite clopen partitions of $\Ca^\Ga$ such that $\bigvee_n \cP_n$ is the Borel sigma-algebra. Recall that clopen means every part of $\cP_n$ is both closed and open. Let $A_n$ be the subset of all measures $\mu \in \Prob_\Ga(\Ca^\Ga)$ such that $h^{naive}_\mu(\cP_n)=0$. We claim that $\cap_n A_n = {\bf ZNE}$. Clearly, $\cap_n A_n \supset {\bf ZNE}$. Suppose $\mu \in \cap_n A_n$. Let $\cQ$ be an arbitrary partition of $\Ca^\Ga$ with $H_\mu(\cQ)<\infty$. Then for every $\eps > 0$ there exists $n$ such that $H_\mu(\cQ|\cP_n)<\epsilon$. By Lemma \ref{lem:lipschitz}, $h^{naive}_\mu(\cQ) \le \eps + h^{naive}_\mu(\cP_n) = \eps$. Since $\eps$ and $\cQ$ are arbitrary this proves $\mu \in {\bf ZNE}$ and therefore, $\cap_n A_n = {\bf ZNE}$ as claimed.

It now suffices to show each $A_n$ is a $G_\delta$ subset. Indeed this follows from the definition
$$h^{naive}_\mu(\cP_n) = \inf_{F \subset \subset \Ga} |F|^{-1} H_\mu(\cP_n^F)$$
and the fact that $\mu \mapsto H_\mu(\cP_n^F)$ is weak* continuous for every finite $F \subset \Ga$. The reason this is weak* continuous uses the fact that if $P \subset \Ca^\Ga$ is clopen then its characteristic function is continuous and therefore induces a continuous functional on $\Prob_\Ga(\Ca^\Ga)$.
\end{proof}

\begin{defn}
The {\bf kernel} of an action $\bfa = \Ga \cc (X,\mu)$ is the subgroup $\Ker(\bfa) : = \{g\in \Ga:~gx=x ~\textrm{for}~\textrm{a.e.} ~x\in X\}$. 
\end{defn}

\begin{lem}\label{lem:thing3}
If $\bfa$ has infinite kernel then it has zero naive entropy.
\end{lem}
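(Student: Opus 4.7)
The plan is to exploit the fact that every element of $K := \Ker(\bfa)$ acts trivially modulo null sets, so refining a partition by translates from $K$ introduces no new cells while simultaneously inflating the denominator in the definition of naive entropy. This gives $h^{naive}_\mu(\cP)=0$ for every finite-entropy partition, whence $h^{naive}_\Ga(X,\mu)=0$.

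First I would fix an arbitrary partition $\cP$ of $X$ with $H_\mu(\cP)<\infty$ and an arbitrary finite $W \subset \Ga$, and observe that for each $k \in K$ the partition $k^{-1}\cP$ equals $\cP$ up to measure zero. It follows that for any finite $K_0 \subset K$ and any $k \in K_0$, $w \in W$ one has $(kw)^{-1}\cP = w^{-1}k^{-1}\cP = w^{-1}\cP$, so
$$\cP^{K_0 W} = \bigvee_{g \in K_0 W} g^{-1}\cP = \bigvee_{w \in W} w^{-1}\cP = \cP^W$$
up to null sets, and in particular $H_\mu(\cP^{K_0 W}) = H_\mu(\cP^W) < \infty$.

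Next I would compare the cardinalities $|K_0 W|$ and $|K_0|$. Picking any $w_0 \in W$, the map $k \mapsto kw_0$ embeds $K_0$ injectively into $K_0 W$, so $|K_0 W| \ge |K_0|$. Plugging $F := K_0 W$ into the definition of naive entropy yields
$$h^{naive}_\mu(\cP) \;\le\; |K_0 W|^{-1} H_\mu(\cP^{K_0 W}) \;\le\; |K_0|^{-1} H_\mu(\cP^W).$$
Since $K$ is infinite, $|K_0|$ may be taken arbitrarily large, so the right-hand side tends to $0$, forcing $h^{naive}_\mu(\cP)=0$. Taking the supremum over all partitions with finite Shannon entropy gives $h^{naive}_\Ga(X,\mu)=0$.

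I do not anticipate any real obstacle: the argument is a direct calculation from the definitions. The only subtle point is that the equality $\cP^{K_0 W} = \cP^W$ holds only modulo null sets, which is consistent with the measure-theoretic conventions adopted throughout the paper (partitions are identified with their classes in $\sP(X,\mu)$).
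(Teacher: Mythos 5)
Your proof is correct and is essentially the paper's argument: both exploit that finite subsets $F$ of the infinite kernel satisfy $\cP^F=\cP$ up to null sets, so the denominator $|F|$ in the definition of naive entropy can be made arbitrarily large while the numerator stays bounded. Your extra set $W$ is harmless but unnecessary — taking $W=\{1_\Ga\}$ in your argument recovers the paper's proof verbatim.
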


\begin{proof}
Let $\cP$ be an arbitrary partition of $X$ with finite Shannon entropy. Then $\cP^K=\cP$ (up to measure zero) for every $K \subset \ker(\bfa)$. Therefore,
$$h^{naive}_\mu(\cP) \le \inf_{F \subset \subset \Ker(\bfa)} |F|^{-1} H_\mu(\cP^F) = |\Ker(\bfa)|^{-1} H_\mu(\cP).$$
In paricular if $\Ker(\bfa)$ is infinite then $h^{naive}_\mu(\cP)=0$.
\end{proof}

\begin{proof}[Proof of Theorem \ref{thm:md2}]

By the Glasner-King correspondence mentioned in the introduction, it suffices to show that {\bf ZNE} is a dense $G_\delta$ subset of $\Prob_\Ga(\Ca^\Ga)$. By Lemma \ref{lem:thing2} it is a $G_\delta$.  If $\Ga$ has property MD then, by definition, the subset of all measures $\mu \in \Prob_\Ga(\Ca^\Ga)$ with finite support is dense in $\Prob_\Ga(\Ca^\Ga)$. Each such measure has infinite kernel. So Lemma \ref{lem:thing3} implies {\bf ZNE} is dense. So we assume $\Ga=G \times H$ where $H$ is infinite, amenable and residually finite. 
 
 Because $H$ is residually finite there exists a sequence $H \ge H_1 \ge H_2 \ge \cdots$ of normal finite-index subgroups of $H$ with $\cap_i H_i = \{1_H\}$. By \cite[Theorem 1]{weiss-monotileable}, because $H$ is amenable, there exist right fundamental domains $F_i$ for $H_i$ such that $\{F_i\}$ forms a F\o lner sequence. This means: (1) $H$ is the disjoint union of $H_i f$ over $f \in F_i$ and (2) for any finite $K \subset H$, 
$$\lim_{i\to\infty} \frac{ |\{f\in F_i:~ fK \subset F_i|}{|F_i|} = 1.$$

Let $\mu \in \Prob_\Ga(\Ca^\Ga)$ be arbitrary. We will show that it is a weak* limit of measures with zero Rokhlin entropy. For $i\in \N$, define $\phi_i:\Ca^\Ga \to \Ca^\Ga$ by $\phi_i(x)(g,h) = x(g,f)$ where $g\in G, h \in H$ and $f\in F_i$ is the unique element satisfying $H_ih=H_if$. Observe that $\phi_i(x)$ is $H_i$-invariant and $\phi_i$ is $G$-equivariant. Therefore, the pushforward measure $\phi_{i*}\mu$ is $G\times H_i$-invariant. Also observe that $F_i^{-1}$ is a left fundamental domain in the sense that $H$ is the disjoint union of $f^{-1}H_i$ over $f\in F_i$. Therefore, 
$$\mu_i : = |F_i|^{-1} \sum_{f\in F_i} (1_G,f_i^{-1})_*\phi_{i*}\mu$$
is $\Ga$-invariant. Since $H_i$ is normal, the kernel of the action $\Ga \cc (\Ca^\Ga,\mu_i)$ contains $H_i$. By Lemma \ref{lem:thing3}, this action has zero naive entropy. 

We claim that $\mu_i \to \mu$ as $i\to\infty$. To see this, let $\Phi_i:\Ca^\Ga \to \Ca^\Ga \times \Ca^\Ga$ denote the graph of $\phi_i$:
$$\Phi_i(x) = (x, \phi_i(x)).$$
Let $\lambda_i =|F_i|^{-1}\sum_{f\in F_i} (1_G,f_i^{-1})_*\Phi_{i*}\mu$. Because $\lambda_i$ is a joining of $\mu$ and $\mu_i$ it suffices to show that for every $(g,h) \in G\times H$,
$$\lambda_i(\{ (x,y):~ x(g,h)=y(g,h)\} ) \to 1$$
as $i\to\infty$. So fix $(g_0,h_0) \in G\times H$. To simplify notation, we let 
$$\Delta = \{(x,y) \in \Ca^\Ga \times \Ca^\Ga:~x(g_0,h_0) = y(g_0,h_0)\}.$$
 It suffices to show that for any $x\in \Ca^\Ga$,
$$\frac{\#\{f\in F_i:~ (1_G,f_i^{-1})\Phi_i(x) \in \Delta\}}{\#F_i}  \ge  \frac{ |\{f\in F_i:~ fh_0 \in F_i \} |}{|F_i|}$$
since the latter tends to 1 uniformly in $x$. This follows from
$$\{f\in F_i:~ (1_G,f_i^{-1})\Phi_i(x) \in \Delta \} \supset \{f\in F_i:~ fh_0 \in F_i \}$$
which follows directly from the definitions: if $f\in F_i$ and $fh_0 \in F_i$ then 
$$(1_G,f^{-1})\Phi_i(x)(g_0,h_0) = \Phi_i(x)(g_0,fh_0) = (x(g_0,fh_0), x(g_0, fh_0)).$$
This proves the claim. This implies that $\mu_i \to \mu$ as $i\to\infty$ in the weak* topology. Indeed, if $L \subset \Ga$ is any finite subset and $f:\Ca^L \to \C$ any continuous function then the function $\tf:\Ca^\Ga \to \C$ defined by composing the restriction map $\Ca^\Ga \to \Ca^L$ with $f$ satisfies $\int \tf~d\mu_i \to \int f~d\mu$. Since such functions are dense in the space of all continuous functions, it follows that $\mu_i \to \mu$ as claimed. Because $\mu$ is arbitrary, this implies {\bf ZNE} is dense.

\end{proof}

\section{Weak containment}\label{sec:weak}

Given any pmp action $\bfa=\Ga \cc (X,\mu)$, let $\Factor(\bfa)$ denote the set of all measures $\nu \in \Prob_\Ga(\Cantor^\Ga)$ such that there is a $\Ga$-equivariant measurable map $\phi:X \to \Cantor^\Ga$ with $\phi_*\mu=\nu$. This is the set of {\bf factor measures}. Let $W(\bfa)$ be the weak* closure of $\Factor(\bfa)$. 

Now suppose $\bfb=\Ga \cc (Y,\nu)$ is another pmp action. We say $\bfb$ is {\bf weakly contained} in $\bfa$, denoted $\bfb \prec \bfa$, if $W(\bfb) \subset W(\bfa)$. If $\bfb \prec \bfa$ and $\bfa \prec \bfb$ then we say $\bfa$ and $\bfb$ are {\bf weakly equivalent}. This notion was introduced in \cite{kechris-2012}. In \cite{T-D12} it is proven that the definition given in this paper is equivalent to the one introduced in \cite{kechris-2012}.  Some basic facts: all Bernoulli shifts over $\Ga$ are weakly equivalent. In fact the Abert-Weiss Theorem \cite{abert-weiss-2013} states: if $\bfa$ is any essentially free action of $\Ga$ then $\bfa$ weakly contains a Bernoulli shift. There exists an action $\bfa$ that weakly contains all actions of $\Ga$ (this is called the weak Rokhlin property, see \cite{glasner2006every}). 

It is an open problem whether, for a given action $\bfa$, the set of all measures $\mu \in W(\bfa)$ with zero Rokhlin entropy is residual. Of course, this is true if $W(\bfa) = \Prob_\Ga(\Ca^\Ga)$ by Theorem \ref{thm:zero}. It is also true if $\bfa$ is a Bernoulli shift:

\begin{cor}\label{cor:weak-zero}
Let $\bfa$ be a Bernoulli shift. Then the generic measure $\mu \in W(\bfa)$ has zero Rokhlin entropy.
\end{cor}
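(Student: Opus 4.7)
The plan is to show that the set of zero Rokhlin entropy measures in $W(\bfa)$ is a dense $G_\delta$. The $G_\delta$ part is immediate: Lemma \ref{lem:g-delta1} shows the set of ergodic zero Rokhlin entropy measures is $G_\delta$ in $\Prob_\Ga(\Ca^\Ga)$, and hence in the closed subspace $W(\bfa)$. For density, the amenable case reduces to Theorem \ref{thm:zero} after noting that Foreman-Weiss density of the measure-conjugacy class of a free action in $\Prob_\Ga(\Ca^\Ga)$ forces $W(\bfa)=\Prob_\Ga(\Ca^\Ga)$. I focus on the nonamenable case.

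Let $(Z,\zeta)$ be the action from Corollary \ref{cor:inverse limit}: an inverse limit of factors $(Y_i,\nu_i)$ of Bernoulli shifts, with zero Rokhlin entropy, that factors onto $\bfa$. Note $(Z,\zeta)$ is ergodic as an inverse limit of ergodic actions. I first claim $W(\bfa)=W((Z,\zeta))$. The inclusion $W(\bfa)\subset W((Z,\zeta))$ follows from $\Factor(\bfa)\subset\Factor((Z,\zeta))$, since $\bfa$ is a factor of $(Z,\zeta)$. For the reverse, by Abert-Weiss all Bernoulli shifts over $\Ga$ are weakly equivalent, so each $(Y_i,\nu_i)$, being a factor of a Bernoulli shift, satisfies $(Y_i,\nu_i)\prec\bfa$. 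Any equivariant measurable $\phi:Z\to\Ca^\Ga$ can be approximated in weak* by maps factoring through the $Y_i$: identifying $\Ca\cong\{0,1\}^\N$, each coordinate of $\phi$ is an $L^1$-limit of functions measurable with respect to the pullback sigma-algebra of some $Y_i$ (since these pullback sigma-algebras generate $\cB_Z$), and a diagonal argument produces equivariant maps $\phi^{(n)}:Z\to\Ca^\Ga$ with $\phi^{(n)}_*\zeta\to\phi_*\zeta$ weak* and each $\phi^{(n)}_*\zeta\in\Factor((Y_{i(n)},\nu_{i(n)}))\subset W(\bfa)$. Hence $\Factor((Z,\zeta))\subset W(\bfa)$, and so $W((Z,\zeta))\subset W(\bfa)$.

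With $W(\bfa)=W((Z,\zeta))$ in hand, I apply Lemma \ref{lem:2} to $(Z,\zeta)$: for any $\nu'=\phi_*\zeta\in\Factor((Z,\zeta))$, the lemma supplies equivariant measurable isomorphisms $\Phi_n:Z\to\Ca^\Ga$ onto their images with $\Phi_{n*}\zeta\to\nu'$ weak*. Each $\Phi_{n*}\zeta$ is measurably conjugate to $(Z,\zeta)$, hence ergodic with zero Rokhlin entropy, and $\Phi_{n*}\zeta\in\Factor((Z,\zeta))\subset W(\bfa)$. A diagonal argument over the dense subset $\Factor((Z,\zeta))$ of $W(\bfa)$ yields density of ergodic zero Rokhlin entropy measures in $W(\bfa)$, completing the proof.

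The main obstacle is the inclusion $W((Z,\zeta))\subset W(\bfa)$, which requires approximating an arbitrary factor map out of the inverse limit by maps factoring through a single level; once this is in hand, Lemma \ref{lem:2} supplies the zero Rokhlin entropy approximations automatically from the fact that $(Z,\zeta)$ itself has zero Rokhlin entropy.
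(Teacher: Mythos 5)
Your proposal is correct and follows essentially the same route as the paper: reduce the amenable case to Theorem \ref{thm:zero}, get the $G_\delta$ property from Lemma \ref{lem:g-delta1}, and obtain density by showing $W(\bfa)=W((Z,\zeta))$ for the zero-entropy inverse limit of Corollary \ref{cor:inverse limit} and then applying Lemma \ref{lem:2}. The only difference is that you spell out the approximation of factor maps of the inverse limit through its finite levels, a step the paper compresses into the single assertion ``By Lemma \ref{lem:2}, $W(\bfb)=W(\bfa)$.''
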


\begin{proof}
If $\Ga$ is amenable then $W(\bfa)=\Prob_\Ga(\Ca^\Ga)$. So the result follows from Theorem \ref{thm:zero}. So we may assume $\Ga$ is nonamenable. In this case, $\bfa$ is strongly ergodic and therefore every measure $\mu \in W(\bfa)$ is ergodic. By Lemma \ref{lem:g-delta1}, the set of all measures $\mu \in W(\bfa)$ with zero Rokhlin entropy is a $G_\delta$ subset. By Corollary \ref{cor:inverse limit} there exists an action $\bfb$ that is an inverse limit of factors of Bernoulli shifts, that factors onto all Bernoulli shifts and has zero Rokhlin entropy. By Lemma \ref{lem:2}, $W(\bfb)=W(\bfa)$. By Lemma \ref{lem:2} again, the set of measures in $W(\bfb)$ with zero Rokhlin entropy is dense.
\end{proof}

\appendix

\bibliography{biblio}
\bibliographystyle{alpha}

\end{document}